\documentclass[12pt]{amsart}
\usepackage{amsfonts,amssymb,amscd,amsmath,enumerate,verbatim,color}
\usepackage[latin1]{inputenc}
\usepackage{amscd}
\usepackage{latexsym}
\usepackage{graphicx} 
\usepackage{tikz}
\usetikzlibrary{intersections, calc, arrows}

\usepackage{mathptmx}

%
%
%
%
%
%
\def\NZQ{\mathbb}               

\def\ZZ{{\NZQ Z}}
\def\RR{{\NZQ R}}

%
%

%
%
\def\ab{{\mathbf a}}

\def\eb{{\mathbf e}}

\def\xb{{\mathbf x}}


%
\def\opn#1#2{\def#1{\operatorname{#2}}} 
%
\opn\gr{gr}


\def\Pc{{\mathcal P}}

%
%
%
%
%
\newtheorem{Theorem}{Theorem}[section]
\newtheorem{Lemma}[Theorem]{Lemma}

\newtheorem{Proposition}[Theorem]{Proposition}

\theoremstyle{definition}

%
%
\let\epsilon\varepsilon
\let\phi=\varphi
\let\kappa=\varkappa
%
%
\textwidth=15cm \textheight=22cm \topmargin=0.5cm
\oddsidemargin=0.5cm \evensidemargin=0.5cm \pagestyle{plain}

\opn\dis{dis}
\opn\height{height}
\opn\dist{dist}
\def\pnt{{\raise0.5mm\hbox{\large\bf.}}}

\opn\Lex{Lex}
\opn\conv{conv}
\opn\codeg{codeg}
\opn\codim{codim}
\opn\int{int}

\opn\reg{reg} 
%


%

%
%

\begin{document}

\title{Edge rings with $q$-linear resolutions}
\author{Kenta Mori, Hidefumi Ohsugi and Akiyoshi Tsuchiya}

\address{Kenta Mori,
	Department of Mathematical Sciences,
	School of Science and Technology,
	Kwansei Gakuin University,
	Sanda, Hyogo 669-1337, Japan} 
\email{k-mori@kwansei.ac.jp}

\address{Hidefumi Ohsugi,
	Department of Mathematical Sciences,
	School of Science and Technology,
	Kwansei Gakuin University,
	Sanda, Hyogo 669-1337, Japan} 
\email{ohsugi@kwansei.ac.jp}

\address{Akiyoshi Tsuchiya,
Graduate School of Mathematical Sciences,
University of Tokyo,
Komaba, Meguro-ku, Tokyo 153-8914, Japan} 
\email{akiyoshi@ms.u-tokyo.ac.jp}

\subjclass[2010]{05E40; 13H10; 52B20}
\keywords{finite graph, edge ring, linear resolution, $\delta$-polynomial.}

\begin{abstract}
In the present paper, we give a complete classification of connected simple graphs whose edge rings have a $q$-linear resolution with $q \geq 2$. In particular, we show that the edge ring of a finite connected simple graph with a $q$-linear resolution, where $q \geq 3$, is a hypersurface, which was conjectured by Hibi, Matsuda, and Tsuchiya.
\end{abstract}

\maketitle

\section{Introduction}
Let $K$ be a field and $K[\xb]:=K[x_1,\ldots,x_n]$ the polynomial ring with $n$ variables over $K$. 
For a finite simple graph $G$ on the vertex set $[n]:=\{1,\ldots,n\}$, the \textit{edge ring} $K[G]$ of $G$ is the $K$-subalgebra of $K[\xb]$ generated by the quadratic monomials $x_i x_j$ corresponding to the edges $\{i,j\}$ of $G$. 
Recently, edge rings and the associated lattice polytopes, which are called \textit{edge polytopes}, have been studied from 
the viewpoints of combinatorics, graph theory, geometric algebra, and commutative algebra. 
In particular, there has been significant interest in understanding the minimal free resolutions of $K[G]$ for several classes of graphs \cite{BOV,HKO,HMT,GM,NN,OHquad,OHkoszul,Tbipartite}.
We are interested in which edge ring has a $q$-linear resolution, where $q \geq 2$.
Previously, in \cite{OHquad}, Hibi and the second author gave an algebraic characterization of finite connected simple graphs whose edge rings have $2$-linear resolutions. 

\begin{Proposition}[{\cite[Theorem 4.6]{OHquad}}]
\label{thm:OH}
Let $G$ be a finite connected simple graph on $[n]$. Then $K[G]$ has a $2$-linear resolution if and only if $K[G]$ is isomorphic to the polynomial ring in $n-2\delta$ variables over the edge ring $K[K_{2,\delta}]$ of the complete bipartite graph $K_{2,\delta}$.
\end{Proposition}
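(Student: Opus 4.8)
The plan is to prove the two implications separately; the ``if'' direction is short, while the ``only if'' direction carries essentially all of the work.

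\emph{Sufficiency.} Up to renaming the generators, $K[K_{2,\delta}]$ is the quotient of a polynomial ring by the ideal $I_2(X)$ of $2\times 2$ minors of a generic $2\times\delta$ matrix $X$ --- equivalently, it is the Segre product of $K[s,t]$ and $K[y_1,\dots,y_\delta]$, i.e.\ the homogeneous coordinate ring of $\PP^1\times\PP^{\delta-1}$. For $\delta\le 1$ this ring is already a polynomial ring, and for $\delta\ge 2$ its minimal graded free resolution is the Eagon--Northcott complex of $X$, which is $2$-linear. Passing from a graded $K$-algebra $R$ to a polynomial extension $R[y_1,\dots,y_m]$ does not change the graded Betti numbers, since the minimal free resolution over the enlarged polynomial ring is obtained from that of $R$ by flat base change; hence $R[y_1,\dots,y_m]$ again has a $2$-linear resolution. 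Consequently every $G$ for which $K[G]$ has the asserted form has a $2$-linear resolution.

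\emph{Necessity --- reductions.} Assume now that $K[G]$ has a $2$-linear resolution, so that the toric ideal $I_G$ is generated in degree $2$ and $\reg K[G]=1$. First I would use only the generation in degree $2$: the graphs whose edge rings are defined by quadratic binomials admit an explicit combinatorial description (Ohsugi and Hibi), which rules out long primitive even closed walks, forces every even cycle of length $\ge 6$ to have suitable chords, and confines the odd cycles of $G$ to a few prescribed configurations (short, chorded, and joined to one another only through specific bridges). This already restricts $G$ to a structurally simple list. Next, an edge $e$ of $G$ lying on no even closed walk --- for instance a bridge, or any edge of an odd-cycle component --- gives a free polynomial generator $z_e$ of $K[G]$, since no binomial of $I_G$ involves $z_e$; deleting all such edges (and any vertices that thereby become isolated) writes $K[G]$ as a polynomial ring over $K[G_0]$, where every edge of $G_0$ lies on an even closed walk, and $K[G_0]$ again has a $2$-linear resolution. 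Since a $2$-linear resolution is not preserved under a nontrivial tensor product of $K$-algebras, $G_0$ has at most one nontrivial connected component, and we are reduced to the case that $G$ is connected and every edge of $G$ lies on an even closed walk.

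\emph{Necessity --- the core case.} It remains to show that such a $G$ must be $K_{2,\delta}$. Generation in degree $2$ alone is far too weak at this point (graphs with several odd cycles, or with many chords, still satisfy it), so the full linearity of the resolution must be exploited. I would first rule out odd cycles: an odd cycle of $G$, together with the extra edges that the quadratic-generation constraints force on $G$, produces --- via multigraded (Hochster-type) formulas for the Betti numbers of $K[G]$, or by writing down an explicit second syzygy --- a nonzero $\beta_{i,j}(K[G])$ with $j\ge i+2$, contradicting $\reg K[G]=1$; hence $G$ is bipartite. For bipartite $G$ the ring $K[G]$ is normal and Cohen--Macaulay, so $\reg K[G]=1$ is equivalent to the $\delta$-polynomial of the edge polytope $P_G$ of $G$ having degree at most $1$. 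Combining this with quadratic generation and the fact that every edge lies on an even closed walk, one shows that $P_G$ is a product of simplices $\Delta_1\times\Delta_{\delta-1}$; the only connected graph with that edge polytope is $K_{2,\delta}$, which completes the proof.

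\emph{Main obstacle.} The crux is this last paragraph: upgrading ``$I_G$ quadratically generated with $\reg K[G]=1$'' to the exact graph. I expect this to require a case analysis over the configurations permitted by the quadratic-generation classification, showing in each non-$K_{2,\delta}$ case that some second (or higher) syzygy is pushed into internal degree $\ge i+2$ --- equivalently, that the $\delta$-polynomial of $P_G$ acquires a term of degree $\ge 2$. It is precisely the reformulation, after the reductions, in terms of Cohen--Macaulay bipartite edge rings and the $\delta$-polynomials of their edge polytopes that makes this case analysis manageable, which is presumably why the paper records this characterization and works throughout with edge polytopes and $\delta$-polynomials.
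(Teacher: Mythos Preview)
This proposition is not proved in the present paper: it is quoted as \cite[Theorem~4.6]{OHquad} and used as input. The only information the paper gives about its proof appears in the proof of Theorem~\ref{2lr}, where it is recorded that the argument in \cite{OHquad} actually yields the equivalent criterion: $K[G]$ has a $2$-linear resolution if and only if $G$ contains a subgraph $K_{2,\delta}$ and, after relabelling, $I_G = I_{K_{2,\delta}}\,K[x_{2\delta+1},\ldots,x_m]$. So there is no in-paper proof to compare against line by line.

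Your outline is nonetheless compatible with that criterion. The sufficiency direction (Eagon--Northcott for $2\times\delta$ minors, plus invariance of Betti numbers under polynomial extension) is correct. Your reduction step --- peeling off edges whose variables do not occur in $I_G$ so as to write $K[G]$ as a polynomial ring over $K[G_0]$ --- is exactly what the identity $I_G = I_{K_{2,\delta}}\,K[x_{2\delta+1},\ldots,x_m]$ expresses, and the K\"unneth argument excluding two nontrivial tensor factors is fine. The genuine gap, which you yourself flag, is the ``core case'': showing that a connected $G_0$ with $2$-linear $K[G_0]$ in which every edge lies on an even closed walk must be $K_{2,\delta}$. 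Your proposed route (exclude odd cycles by exhibiting a Betti number $\beta_{i,j}$ with $j\ge i+2$; then, in the bipartite case, use $\deg\delta(\Pc_{G_0},\lambda)\le 1$ to force $\Pc_{G_0}\cong\Delta_1\times\Delta_{\delta-1}$) is a strategy, not an argument --- neither step is carried out, and the second in particular requires more than the degree bound alone, since there are many lattice polytopes with linear $\delta$-polynomial. The proof in \cite{OHquad} instead passes through its explicit classification of graphs whose toric ideal is generated by quadrics and checks regularity on that list; that classification absorbs the case analysis you are postponing.
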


Furthermore, in \cite{HMT}, Hibi, Matsuda, and the third author showed an algebraic property of finite connected simple graphs whose edge rings have $3$-linear resolutions.

\begin{Proposition}[{\cite[Theorem 0.1]{HMT}}]
\label{thm:HMT}
	The edge ring of a finite connected simple graph with a $3$-linear resolution is a hypersurface.
\end{Proposition}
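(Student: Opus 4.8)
The plan is to show directly that $\height I_G=1$, i.e.\ that $I_G$ is principal. Write $K[G]=S/I_G$ with $S=K[y_e:e\in E(G)]$, $m=|E(G)|$, graded so that every $y_e$ has degree $1$ (so $I_G$ is homogeneous); a $3$-linear resolution means $I_G$ is minimally generated in degree $3$ and $\beta_{i,j}(K[G])=0$ for $j\ne i+2$, whence $\reg K[G]=2$ and $I_G\ne 0$. One reduction is free: if $G$ has a vertex $v$ of degree one with neighbour $u$, then $x_ux_v$ is algebraically independent over $K[G\setminus v]$ (it is the only generator involving $x_v$), so $K[G]\cong K[G\setminus v][t]$; as this preserves both being a hypersurface and having a $q$-linear resolution, I may assume $G$ has minimum degree $\ge 2$.

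Next, $G$ is $C_4$-free: a $4$-cycle $a$--$b$--$c$--$d$--$a$ produces the nonzero degree-$2$ binomial $y_{ab}y_{cd}-y_{bc}y_{da}\in I_G$, which is impossible since $(I_G)_{\le 1}=0$ and $I_G$ has no degree-$2$ generators (equivalently, $\dim_K(K[G])_2=\binom{m+1}{2}$ forces products of two edge generators to be pairwise distinct monomials). Now assume for contradiction that $K[G]$ is not a hypersurface, so $c:=\height I_G\ge 2$; then the prime ideal $I_G$ is not principal, hence has two distinct (up to scalar) minimal generators, necessarily of degree $3$. Each minimal generator $f$ is an irreducible binomial: if $f=gh$ with $g,h$ homogeneous non-units, then $g$ or $h$ lies in the prime ideal $I_G$, giving a lower-degree element of $I_G$ dividing $f$, so $f\in\mm I_G$ -- absurd. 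Hence $(f_1)$ is prime, $S/(f_1)$ is a domain, and any other minimal generator $f_2$ is a nonzerodivisor on it, so $f_1,f_2$ form a regular sequence. If $\mu(I_G)=2$ this finishes the proof: $K[G]$ is then a complete intersection whose Koszul resolution has $\beta_{2,6}\ne 0$, contradicting $3$-linearity. So we may assume $\mu(I_G)\ge 3$.

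For the case $\mu(I_G)\ge3$ I would pass to the $\delta$-polynomial of the edge polytope $P_G$. First one shows a $3$-linear resolution forces $K[G]$ to be Cohen--Macaulay -- the natural route being to deduce Ohsugi--Hibi's odd-cycle condition, hence normality, hence Cohen--Macaulayness (a violation of the odd-cycle condition produces a primitive even closed walk of length $\ge 10$, i.e.\ a "two odd cycles joined by a path'' subgraph whose edge ring is a hypersurface of regularity $\ge 4$, which one argues is incompatible with $\reg K[G]=2$). Then, reducing the Hilbert series $(1-\beta_1t^3+\beta_2t^4-\cdots)/(1-t)^m=\delta(P_G)/(1-t)^{\dim P_G+1}$ modulo $(1-t)^{m-c}$ gives $\delta(P_G)=1+ct+\binom{c+1}{2}t^2$. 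Since $\delta(P_G)$ is the $h$-vector of the Cohen--Macaulay ring $K[G]$, Macaulay's bound gives $\delta_2\le\binom{\delta_1+1}{2}$, so we are exactly in the equality case; combined with $\delta_3=0$ this forces the Artinian reduction of $K[G]$ to be $K[z_1,\dots,z_c]/(z_1,\dots,z_c)^3$. Thus, modulo a linear system of parameters, $I_G$ is the third power of the maximal ideal in $c$ variables -- a Cohen--Macaulay ideal of codimension $c$ with $\binom{c+2}{3}$ cubic generators and a rigid, linear (Eagon--Northcott type) syzygy structure.

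The main obstacle -- and where I expect the real work to lie -- is ruling this out for $c\ge2$: one must show the toric ideal of a $C_4$-free graph cannot be a specialization of $(z_1,\dots,z_c)^3$. The tools should be the classification of degree-$3$ primitive even closed walks in a $C_4$-free graph (only hexagons and pairs of triangles sharing a vertex) together with an analysis of the linear syzygies among such cubic binomials, showing that the Eagon--Northcott syzygy pattern demanded by $(z_1,\dots,z_c)^3$ cannot be realized by hexagon/bowtie relations of an edge ideal. (Note that for $q=2$ the analogous determinantal ring $(z_1,\dots,z_c)^2$ \emph{does} occur as an edge ring -- e.g.\ $K[K_{2,3}]$ -- but its graph contains $C_4$; so it is precisely $C_4$-freeness that must be exploited in the degree-$3$ case.) Once $c=1$ is forced, $I_G$ is principal and $K[G]$ is a hypersurface.
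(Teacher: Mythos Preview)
Your proposal is not a proof: you explicitly stop at what you call ``the main obstacle'', namely showing that for $c\ge 2$ the toric ideal of a $C_4$-free graph cannot specialize to $(z_1,\dots,z_c)^3$. Everything up to that point is sound --- the reduction to minimum degree $\ge 2$, the $C_4$-freeness, the $\mu(I_G)=2$ case via the Koszul complex, and (granting Cohen--Macaulayness) the identification of the Artinian reduction with $K[z_1,\dots,z_c]/\mathfrak m^3$ --- but the final step is only a sketch of a strategy (``analyze linear syzygies among hexagon/bowtie binomials''). You give no argument that the Eagon--Northcott syzygy pattern is unrealizable, and even for $c=2$ (four cubic generators, three linear first syzygies) this is not obvious. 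A second, smaller gap sits inside your Cohen--Macaulay argument: you assert that a violation of the odd-cycle condition yields a hypersurface subring of regularity $\ge 4$ ``which one argues is incompatible with $\reg K[G]=2$'', but regularity does not in general pass to non-induced subgraphs; what actually works here is the inequality $\deg\mathcal P_{G'}\le\deg\mathcal P_G\le\reg K[G]$ for \emph{any} subgraph $G'$ (Lemma~\ref{deglemma} in the paper), which you never invoke.

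The paper treats this Proposition as a citation from \cite{HMT}, but its own proof of Theorem~\ref{thm:main} is written for all $q\ge 3$ (every supporting lemma carries the hypothesis $q\ge 3$), so the same argument covers the $3$-linear case. That argument is entirely different from yours: rather than Hilbert-series and syzygy considerations, it works on the polytope side. One first shows (Lemma~\ref{2q2q2q}) that every even cycle has length exactly $2q$, and (Lemma~\ref{noeven}) that if $G$ has no even cycle then $K[G]$ is a hypersurface. The heart of the proof is Lemma~\ref{codimension2lemma}: any connected tricyclic subgraph containing an even cycle of length $2q$ satisfies $\deg\mathcal P\ge q$. This is established by a case analysis over the fifteen tricyclic base multigraphs (Proposition~\ref{bitri}), in each case exhibiting an explicit interior lattice point of a small dilate of the edge polytope. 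Once that lemma is in hand, one simply observes that if $K[G]$ is not a hypersurface then $G$ contains such a tricyclic subgraph, whence $\reg K[G]\ge\deg\mathcal P_G\ge q$, contradicting $\reg K[G]=q-1$. So where you try to constrain the \emph{algebra} of $I_G$ (its syzygies), the paper constrains the \emph{combinatorics} of $G$ (its cycle structure) and reads off a polytope-degree bound; the latter route is complete, while yours leaves the decisive step open.
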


In addition, they conjectured that the edge ring of a finite connected simple graph with a $q$-linear resolution, where $q \geq 4$, is a hypersurface (\cite[Conjecture 0.2]{HMT}).
Recently, this conjecture has been proved for the cases of chordal graphs \cite{NN} and bipartite graphs \cite{Tbipartite}.

In the present paper, we prove this conjecture for any finite connected simple graph.

\begin{Theorem}
\label{thm:main}
	The edge ring of a finite connected simple graph with a $q$-linear resolution, where $q \geq 4$, is a hypersurface.
\end{Theorem}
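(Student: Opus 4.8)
I would argue by contradiction: suppose $K[G]$ has a $q$-linear resolution with $q\geq 4$ but is \emph{not} a hypersurface, and extract a contradiction from a combinatorial analysis of $G$ together with the numerical rigidity a linear resolution imposes. First I would reduce to an essential graph: if $G$ has a vertex of degree $1$, say $\{i,j\}$ with $\deg j=1$, then $x_ix_j$ is a variable occurring in no other generator of $K[G]$, so $K[G]\cong K[G\setminus j][t]$ is a polynomial extension, and both having a $q$-linear resolution and being a hypersurface are inherited under polynomial extensions; so we may assume every vertex of $G$ has degree $\geq 2$. It is also useful to record that $\codim I_G=|E(G)|-n$ when $G$ is non-bipartite and $\codim I_G=|E(G)|-n+1$ when $G$ is bipartite, so that ``$K[G]$ is not a hypersurface'' is equivalent to $\codim I_G\geq 2$, i.e.\ the cycle rank of $G$ is at least $3$ (non-bipartite case) or at least $2$ (bipartite case).

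\textbf{Forbidden subgraphs.} Since $q\geq 3$ we have $\beta_{1,2}(K[G])=0$, so $I_G$ has no quadratic minimal generator; as the degree-$2$ binomials of $I_G$ are precisely those coming from a $4$-cycle, this means $G$ contains no $C_4$, and hence $I_G$ has \emph{no} quadratic binomials at all. Since $q\geq 4$ we also have $\beta_{1,3}(K[G])=0$, so $I_G$ has no cubic minimal generator; because $I_G$ has no linear part and no quadratic binomials, this forces $I_G$ to contain no cubic binomial whatsoever. Translating the length-$6$ even closed walks with non-trivial binomial into subgraphs, and using $C_4$-freeness to discard the degenerate ones, this rules out exactly $C_6$ and the ``bowtie'' (two triangles with a single common vertex). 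So after the reduction we may assume $G$ is connected, has minimum degree $\geq 2$, and contains no $C_4$, no $C_6$ and no bowtie: every cycle of $G$ is odd or has length $\geq 8$, no two triangles of $G$ meet, and consequently every non-trivial block (maximal $2$-connected subgraph) of $G$ is either a single cycle or a $C_4$- and $C_6$-free $2$-connected graph of girth $\in\{3,5\}\cup\{\geq 7\}$.

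\textbf{The heart of the argument.} Now I would analyze the block decomposition of $G$ along its cut vertices and bridges. A single cycle, or more generally a block of cycle rank $\leq 2$ attached to trees, can only produce a hypersurface; so the hypothesis $\codim I_G\geq 2$ forces $G$ to contain two ``independent'' even closed walks $W_1,W_2$ lying in edge-disjoint parts of $G$ (in the simplest configuration, inside two odd/even cycles joined through the rest of $G$ by a path), whose binomials $f_{W_1},f_{W_2}$ are minimal generators of $I_G$ of degrees $q_1,q_2$. Since $f_{W_i}$ only involves the edge-variables of the subgraph traversed by $W_i$, the two binomials involve disjoint variable sets and hence form a regular sequence; carrying the resulting Koszul syzygy through a minimal free resolution of $K[G]$ produces a non-vanishing Betti number $\beta_{2,\,q_1+q_2}(K[G])$, so $\reg K[G]\geq q_1+q_2-2$. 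As $I_G$ is generated in degree $q\geq 4$ we have $q_1,q_2\geq q$, whence $\reg K[G]\geq 2q-2>q-1$, contradicting the $q$-linear resolution. The same obstruction can be read off the $\delta$-polynomial of the edge polytope $P_G$: a $q$-linear resolution pins down the first coefficients of $\delta_{P_G}$ to the sequence prescribed by $q$ and $\codim I_G$, while the two-independent-walks configuration forces a lattice-point count in a dilate of $P_G$ incompatible with that sequence.

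\textbf{Main obstacle.} The reductions and the regularity/Betti-number bookkeeping above are routine; the crux is the structural step: showing that \emph{every} connected, $C_4$-, $C_6$- and bowtie-free graph of minimum degree $\geq 2$ with $\codim I_G\geq 2$ genuinely contains two independent even closed walks as above, rather than having all of its extra cyclic structure collapse into a single short relation or interact through shared vertices and bridges in a way that destroys the disjointness of the supporting variable sets. This requires a careful case analysis driven by the parities of the blocks and by how even closed walks can route through cut vertices, and it is precisely the $q$-uniform analogue of the case analysis carried out for $q=3$ in \cite{HMT}. Once Theorem~\ref{thm:main} is in hand, combining it with Proposition~\ref{thm:OH} (the $q=2$ case) and Proposition~\ref{thm:HMT} (the $q=3$ case) yields the complete classification of connected simple graphs whose edge rings have a $q$-linear resolution, $q\geq 2$, announced in the abstract.
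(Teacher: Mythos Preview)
Your reduction to minimum degree $\geq 2$ and the exclusion of $C_4$, $C_6$, and the bowtie are sound preliminaries, but the central step has a genuine gap. The claim that $\codim I_G\geq 2$ yields two even closed walks $W_1,W_2$ whose binomials have \emph{disjoint} variable support is false in general, and you cannot repair it by a finer case analysis. Take the theta graph consisting of two vertices joined by three internally disjoint paths each of length~$q$: it is bipartite, has minimum degree $\geq 2$, contains no $C_4$, no $C_6$ and no bowtie, and $\codim I_G=2$; yet its three even cycles pairwise share an entire path, so no two binomial generators of $I_G$ have disjoint support and your Koszul/regular-sequence mechanism is unavailable. You flag exactly this as the ``main obstacle'', but the point is that it is not a missing case --- inside a single $2$-connected block the walks \emph{must} interact, there is no block decomposition to separate them, and your fallback remark about the $\delta$-polynomial is still phrased in terms of the same nonexistent ``two-independent-walks configuration''. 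A second, smaller gap: your forbidden-subgraph step only rules out \emph{short} even cycles, whereas the argument also needs to exclude even cycles of length $>2q$, and that direction does not follow from vanishing of low Betti numbers.

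The paper proceeds along a different line that bypasses any search for independent generators. It first shows (Lemmas~\ref{degqqq} and~\ref{2q2q2q}) that every even cycle of $G$ has length exactly $2q$, using $\deg\Pc_G\le\reg K[G]$. The case where $G$ has no even cycles is disposed of via the Eisenbud--Goto bound on the number of generators (Lemma~\ref{noeven}). The core is Lemma~\ref{codimension2lemma}: for any connected \emph{tricyclic} graph containing an even cycle, with all even cycles of length $2q$, one has $\deg\Pc_G\geq q$. This is proved by explicitly exhibiting an interior lattice point in a suitable dilate of $\Pc_{G_0}$, case by case over the finitely many tricyclic base shapes in Fig.~2. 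Since a $q$-linear resolution forces $\deg\Pc_G\le q-1$, any non-hypersurface $K[G]$ with a $q$-linear resolution would contain such a tricyclic subgraph and give a contradiction. The theta graph above is precisely one of these tricyclic shapes ($H_{14}$), and the obstruction there is lattice-geometric, not syzygetic.
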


Moreover, using Propositions \ref{thm:OH} and \ref{thm:HMT} and Theorem \ref{thm:main}, we give a complete classification of connected simple graphs whose edge rings have a $q$-linear resolution with $q \geq 2$.

\begin{Theorem}
\label{2lr}
Let $G$ be a finite connected simple graph.
Then $K[G]$ has a $2$-linear resolution if and only if 
$G$ satisfies one of the following{\rm :}
\begin{itemize}
\item[{\rm (i)}]
$G$ is obtained by adding some trees to a complete bipartite graph $K_{2,\delta}$.

\item[{\rm (ii)}]
$G$ is a non-bipartite graph and
obtained by adding one edge to a graph
satisfying condition {\rm (i)} above.
\end{itemize}
\end{Theorem}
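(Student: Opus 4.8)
The plan is to read off the classification from Proposition~\ref{thm:OH}: since $K[G]$ has a $2$-linear resolution exactly when $K[G]$ is a polynomial ring over $K[K_{2,\delta}]$ for some $\delta$, it is enough to prove that the latter condition on $G$ is equivalent to (i) or (ii). The main engine will be the following description of when deleting an edge exhibits $K[G]$ as a one-variable polynomial extension: for an edge $e=\{u,v\}$ of a connected graph $G$, the canonical surjection $K[G\setminus e][t]\to K[G]$ with $t\mapsto x_ux_v$ is an isomorphism if and only if $e_u+e_v$ is not in the $\QQ$-span of the vectors $e_i+e_j$ attached to the remaining edges — equivalently $\dim K[G]=\dim K[G\setminus e]+1$ — and unwinding this with the $\ZZ^{V(G)}$-grading, this is the case precisely when (a) $e$ is a bridge of $G$ such that at least one of the two components of $G\setminus e$ is bipartite (in particular whenever $e$ is a pendant edge), or (b) $G\setminus e$ is connected and bipartite with $u,v$ in the same colour class, in which case $G$ is non-bipartite. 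I will call such an edge \emph{free}.

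For the ``if'' direction I would build the isomorphisms directly. A graph of type~(i) is $K_{2,\delta}$ with pendant trees attached; deleting the tree edges one at a time — each being, at that stage, a pendant (hence free) edge — yields $K[G]\cong K[K_{2,\delta}][y_1,\dots,y_m]$, so $K[G]$ has a $2$-linear resolution by Proposition~\ref{thm:OH}. A graph of type~(ii) is $G=H+e$ with $H$ of type~(i) and $G$ non-bipartite; then $H$ is connected and bipartite with the two endpoints of $e$ in the same colour class, so $e$ is free and $K[G]\cong K[H][t]\cong K[K_{2,\delta}][y_1,\dots,y_{m+1}]$, and again Proposition~\ref{thm:OH} applies. (Each of these isomorphisms of affine $K$-domains can alternatively be verified just by checking that source and target have equal Krull dimension, using $\dim K[H']=|V(H')|-1$ or $|V(H')|$ according as the connected graph $H'$ is bipartite or not.)

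For the ``only if'' direction I would induct on $|E(G)|$. Suppose $K[G]$ has a $2$-linear resolution, so $K[G]\cong K[K_{2,\delta}][y_1,\dots,y_k]$. If $k\ge 1$, then the edge semigroup $S_G$ has a free summand $\NN$, which must be generated by one of its atoms, i.e.\ $G$ has a free edge $e$; for such an edge $K[G]\cong K[G\setminus e][t]$, and enlarging the ambient polynomial ring by one variable leaves graded Betti numbers unchanged, so $K[G\setminus e]$ still has a $2$-linear resolution while $|E(G\setminus e)|<|E(G)|$, whence $G\setminus e$ is of type~(i) or~(ii) by induction. In case (a) ($e$ a bridge), write $G\setminus e=G_1\sqcup G_2$; then $S_G\cong S_{G_1}\oplus S_{G_2}\oplus\NN\cong S_{K_{2,\delta}}\oplus\NN^k$, and since $S_{K_{2,\delta}}$ is indecomposable (its support graph $K_{2,\delta}$ being connected for $\delta\ge 2$), uniqueness of decomposition into indecomposables forces one of $G_1,G_2$ — say $G_2$ — to be a tree and $K[G_1]$ to be a polynomial extension of $K[K_{2,\delta}]$; by induction $G_1$ is of type~(i) or~(ii), and $G$ is $G_1$ with a pendant tree attached, hence of type~(i) or~(ii). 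In case (b) ($e$ not a bridge), $G\setminus e$ is connected and bipartite, hence of type~(i), and $G$ is the non-bipartite graph obtained from it by adding a single edge, hence of type~(ii). Finally, if $k=0$, then $K[G]\cong K[K_{2,\delta}]$ and $G$ is of type~(i) provided one knows $G\cong K_{2,\delta}$ (the small values $\delta\le 1$, where $K[K_{2,\delta}]$ is a polynomial ring, reduce to the elementary classification of trees and of graphs with exactly one cycle, that cycle being odd).

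The hard part will be this last step — a rigidity statement for the $2\times\delta$ determinantal ring: a connected graph $G$ with $K[G]\cong K[K_{2,\delta}]$ ($\delta\ge 2$) must be $K_{2,\delta}$. Counting generators and dimensions already forces $G$ to be bipartite with parts of sizes $p,q$ satisfying $p+q=\delta+2$ and $pq\ge 2\delta$, so $\{p,q\}=\{2,\delta\}$ unless $\delta\le 2$ or $(p-2)(q-2)\ge 1$, and the remaining possibilities must be excluded using the actual ring structure — e.g.\ that $K[K_{2,\delta}]$ is the normal, Cohen--Macaulay coordinate ring of the rational normal scroll $\PP^1\times\PP^{\delta-1}$, whose edge polytope is $\Delta_1\times\Delta_{\delta-1}$. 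I expect the cleanest route to pass through the known description of the faces of edge polytopes together with the normality of $K[G]$, reducing ``$K[G]$ is a polynomial extension of $K[K_{2,\delta}]$'' to ``$P_G$ is a lattice-equivalent iterated pyramid over $\Delta_1\times\Delta_{\delta-1}$'' and then identifying the graphs realising such polytopes.
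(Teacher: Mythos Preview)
Your ``if'' direction and the free-edge mechanism are sound, but the ``only if'' direction diverges from the paper and carries a genuine gap. The paper does not argue from the abstract isomorphism in Proposition~\ref{thm:OH}; it goes back to the \emph{proof} of \cite[Theorem~4.6]{OHquad} and extracts the sharper statement that $K[G]$ has a $2$-linear resolution if and only if $G$ contains $K_{2,\delta}$ as an actual subgraph with $I_G = I_{K_{2,\delta}}K[x_{2\delta+1},\dots,x_m]$ after relabelling the edge variables. With $K_{2,\delta}$ realised concretely inside $G$, the argument is a short codimension count across the block decomposition: writing $\codim K[G]=r(G)-1+\sum_i(|E(B_i)|-|V(B_i)|+1)$ and comparing with $\codim K[K_{2,\delta}]$ forces, in the bipartite case, the block containing $K_{2,\delta}$ to equal $K_{2,\delta}$ and every other block to be an edge; in the non-bipartite case one removes an odd-cycle edge lying outside $K_{2,\delta}$ and reduces to the bipartite case. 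No rigidity of semigroup rings enters.

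Your route, by contrast, hinges on two rigidity claims you do not prove. In the induction step you move from a graded $K$-algebra isomorphism $K[G]\cong K[K_{2,\delta}][y_1,\dots,y_k]$ to a monoid splitting $S_G\cong S_{K_{2,\delta}}\oplus\NN^k$ with the $\NN$-factors generated by atoms, and then invoke unique decomposition into indecomposables in case~(a); this is essentially Gubeladze-type rigidity plus a Krull--Schmidt statement for affine monoids, neither cited nor established. More seriously, you flag the base case $k=0$ yourself as ``the hard part'' and give only a plan (``I expect the cleanest route to pass through\dots''). That is not a proof: your dimension and edge counts leave, for each $\delta\ge 3$, both non-bipartite candidates on $\delta+1$ vertices and bipartite candidates with parts of sizes other than $\{2,\delta\}$, and ruling them out one by one via Betti numbers or polytope geometry is exactly the work you have not done. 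The paper's concrete subgraph realisation sidesteps both difficulties entirely; you should use it.
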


A vertex $v$ of a connected graph $G$ is called a {\em cut vertex}
if the graph obtained by the removal of $v$ from $G$ is disconnected.
Given a graph $G$, a {\em block} of $G$ is a maximal connected subgraph of $G$ with no cut vertices.

\begin{Theorem}
\label{qlr}
Let $q  \geq 3$, and let $G$ be a finite connected simple graph
with non-edge blocks $B_1,\ldots,B_s$.
Then $K[G]$ has a $q$-linear resolution if and only if 
$G$ satisfies one of the following{\rm :}
\begin{itemize}
\item[{\rm (i)}]
$s=1$ and $B_1$ is an even cycle of length $2q$.

\item[{\rm (ii)}]
$s=1$ and $B_1$ is a non-bipartite graph
obtained by adding a path to an even cycle of length $2q$.

\item[{\rm (iii)}]
$s=2$ and $B_1$ is an even cycle of length $2q$,
and $B_2$ is an odd cycle, or vice versa.

\item[{\rm (iv)}]
$s=2$ and $B_1$ and $B_2$ are odd cycles
having one common vertex.
In addition, $r_1 + r_2 = 2q$, where $r_i$
is the length of $B_i$.

\item[{\rm (v)}]
$s=2$ and $B_1$ and $B_2$ are odd cycles
without a common vertex.
In addition, the length of the shortest path from a vertex of $B_1$
to a vertex of $B_2$ is $q- ( r_1 + r_2)/2$, where $r_i$
is the length of $B_i$.
\end{itemize}
\end{Theorem}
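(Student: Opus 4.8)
The plan is to reduce everything to a study of the structure of blocks. The first observation is that, by a standard result on edge rings, the edge ring $K[G]$ decomposes as a tensor product (equivalently, a Segre-type fiber product over $K$) indexed by the blocks of $G$; since a $q$-linear resolution with $q\geq 2$ forces the defining toric ideal to be generated in degree $q$ and to have a linear resolution, and since attaching a tree to a graph does not change the edge ring up to a polynomial extension, one sees that at most the non-edge blocks contribute relations. So the combinatorial problem is: which collections $B_1,\dots,B_s$ of non-edge blocks can occur, and with what intersection pattern, so that the resulting toric ideal is generated in a single degree $q$ and has linear resolution. The degree-$q$ generation will pin down the cycle lengths and the path lengths appearing in (i)--(v), and the linearity of the resolution will be what rules out everything else.

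The key steps, in order, are as follows. First I would invoke Theorem \ref{thm:main} (just proved) to conclude that $K[G]$ is a hypersurface, i.e. the toric ideal $I_G$ is principal, $I_G=(f)$ with $\deg f = q$ (a $q$-linear resolution of a quotient ring forces the single generator, if the ideal is principal, to have degree exactly $q$, and a principal ideal trivially has a linear — indeed $1$-step — resolution). Thus the real content is the converse combinatorial classification of graphs $G$ with $I_G$ principal generated in degree $q$ together with the verification that in each such case the resolution is genuinely $q$-linear. Second, I would recall the combinatorial description of generators of $I_G$ in terms of even closed walks of $G$ (primitive even closed walks give the Graver basis, and minimal generators come from ``primitive'' even closed walks): $I_G$ is principal exactly when $G$ has, up to the natural equivalence, a unique primitive even closed walk, and the degree of the corresponding binomial is half the length of the walk. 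Third, translating ``unique primitive even closed walk of length $2q$'' into block structure: a primitive even closed walk either traverses a single even cycle (giving case (i), length $2q$, and case (ii) where an extra path is attached but the graph stays with the same unique primitive walk up to the even cycle — here one must check the added path creates no new primitive even closed walk, which forces the graph to be non-bipartite in a controlled way), or it is built from two odd cycles joined either at a vertex (case (iv), walk = go around $B_1$, then around $B_2$, total length $r_1+r_2=2q$) or by a path (cases (iii) and (v): an even cycle together with an odd cycle connected so that the even-closed-walk bookkeeping still gives a single primitive walk; in (v) the walk goes around $B_1$, along the connecting path, around $B_2$, back along the path, giving length $r_1+r_2+2\cdot(\text{path length})=2q$, hence the stated path length; case (iii) is the degenerate subcase where one of the odd-cycle-plus-path pieces is replaced by the even cycle $B_1$ itself with $B_2$ an odd cycle, and the ``path'' has length forcing $2q = 2q + 2\cdot 0$, i.e. they share a vertex or are otherwise arranged so the even cycle already accounts for the full degree). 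Fourth, for each configuration I would verify both directions: that $I_G$ is principal (no second independent primitive even closed walk — this is the crux) and that $\deg f = q$ exactly, and conversely that any graph whose edge ring has a $q$-linear ($q\geq 3$) resolution must, being a hypersurface with generator in degree $q$, fall into one of (i)--(v).

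The main obstacle I expect is the forward direction of the ``no second primitive even closed walk'' analysis — that is, given that $K[G]$ is a hypersurface with defining equation of degree $q\geq 3$, proving that the non-edge block structure is exactly one of (i)--(v) and nothing else. One has to argue that if there were, say, three non-edge blocks, or two non-edge blocks in an intersection/separation pattern not on the list, or an even cycle of the wrong length, then $I_G$ would acquire a second minimal generator (a second primitive even closed walk using the extra block or the wrong-length cycle), contradicting principality; and conversely that in configurations (i)--(v) every primitive even closed walk is equivalent to the one exhibited. This requires a careful case analysis of how even closed walks decompose through cut vertices and along connecting paths, using parity of cycle lengths to control which combinations of cycle-traversals and path-traversals close up to an even walk. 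The degree computation giving the precise length/path-length constraints ($r_1+r_2=2q$ in (iv); shortest path length $=q-(r_1+r_2)/2$ in (v); cycle length $2q$ in (i)--(iii)) then falls out of the length of that unique primitive walk, and the verification that the resolution is $q$-linear is immediate once $I_G$ is known to be principal with generator in degree $q$. I would also need the auxiliary fact, used implicitly above, that attaching trees does not disturb the edge ring (a known reduction, effectively Proposition \ref{thm:OH}-style), so that ``non-edge blocks'' is the right level of generality in which to state the classification.
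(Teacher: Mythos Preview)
Your overall strategy --- invoke the hypersurface reduction, then classify graphs whose toric ideal is principal with generator of degree $q$ --- is exactly what the paper does, and the degree computations you sketch for cases (i), (ii), (iv), (v) are correct. Two points, however, deserve correction.

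First, your opening claim that $K[G]$ ``decomposes as a tensor product \dots indexed by the blocks of $G$'' is false in general (blocks sharing a cut vertex do not give a tensor factorization of the edge ring), and in any case it is not needed: once Theorem~\ref{thm:main} (together with Proposition~\ref{thm:HMT} for $q=3$, which you omit) gives that $K[G]$ is a hypersurface, the paper simply quotes the structural classification of hypersurface edge rings (Proposition~\ref{hypersurfaceER}), which says directly that the non-edge blocks are either a single even cycle, a single even cycle with a path added (non-bipartite), or two cycles at least one of which is odd. This replaces your proposed ad hoc analysis of ``no second primitive even closed walk'' and makes the forward direction immediate; you should use it rather than rederive it.

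Second, your account of case (iii) is wrong. You describe it as a ``degenerate subcase'' of the two-odd-cycles-plus-path configuration, with the path of length $0$. It is not: in case (iii) $B_1$ is an even cycle and $B_2$ is an odd cycle in separate blocks, and the unique generator of $I_G$ is the binomial $f_{B_1}$ coming from the even cycle alone, of degree $|E(B_1)|/2$. The odd cycle $B_2$ contributes no relation whatsoever (a single odd cycle has polynomial edge ring), and there is no even closed walk that uses both $B_1$ and $B_2$ in the way Proposition~\ref{ecw} allows, since that proposition requires two \emph{odd} cycles. So the constraint in (iii) is just that $B_1$ have length $2q$; the length of $B_2$ and of any connecting path are irrelevant. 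Once this is corrected, and once you invoke Proposition~\ref{hypersurfaceER} rather than your block-tensor heuristic, your proof coincides with the paper's.
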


The present paper is organized as follows.
In Section~\ref{section:ql}, we introduce the notion of $q$-linear resolutions,
and give necessary conditions for the edge rings to have $q$-linear resolutions.
In particular, the degree of the edge polytopes is important (Lemma~\ref{deglemma}).
In Section~\ref{sec:edgepolyandring}, in order to give several 
necessary conditions for the edge rings to have $q$-linear resolutions,
we study the degree of the edge polytopes and 
the minimal set of generators of toric ideals of edge rings.
Finally, in Section~\ref{sec:last},  we give proofs for Theorems~\ref{thm:main}, \ref{2lr}, and \ref{qlr}.

\section{Toric rings with $q$-linear resolutions}
\label{section:ql}

Let $S = K[x_{1}, \ldots, x_{n}]$ denote the polynomial ring in $n$ variables over a field $K$ with each $\deg x_{i} = 1$.  Let $0 \neq I \subset S$ be a homogeneous ideal of $S$ and 
\[
0 \to \bigoplus_{j \geq 1} S(-j)^{\beta_{h, j}} \to \cdots \to \bigoplus_{j \geq 1} S(-j)^{\beta_{1, j}} \to S \to S/I \to 0
\]
a (unique) graded minimal free $S$-resolution of $S/I$.  The {\em Castelnuovo-Mumford regularity} of $S/I$ is 
\[
\reg (S/I) = \max\{ j - i : \beta_{i, j} \neq 0 \}. 
\] 
We say that $S/I$ has a {\em $q$-linear resolution} if $\beta_{i, j} = 0$ for each $1 \leq i \leq h$ and for each $j \neq q + i - 1$.  If $S/I$ has a $q$-linear resolution, then
\begin{itemize}
\item
 $\reg(S/I) = q - 1$ and 
\item
$I$ is generated by homogeneous polynomials of degree $q$.  
\end{itemize}
See, e.g., \cite{BH} and \cite{binomialideals} for detailed information about regularity and linear resolutions.  
In addition, the following necessary condition is known.

\begin{Lemma}[{\cite[Proposition 1.9 (d)]{EG}}]
\label{EGlemma}
If $S/I$ has a $q$-linear resolution, then 
the number of generators of $I$ is at least $\binom{c+q-1}{c-1}$, where 
$c$ is the codimension of $S/I$.
\end{Lemma}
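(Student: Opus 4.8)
The plan is to reduce to the Artinian case by successive general hyperplane sections, where the statement becomes the classical computation of the first Betti number of a pure (here, linear) resolution. Throughout, write $\mu(J)$ for the minimal number of homogeneous generators of a graded ideal $J$, so that the quantity to be bounded is $\mu(I)$. We may assume $K$ is infinite, since extending the base field changes neither the Betti numbers of $S/I$ nor its codimension.

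Suppose first that $d := \dim S/I \ge 1$, and let $\ell$ be a general linear form; set $\bar S := S/(\ell)$, a polynomial ring in $n-1$ variables, and $\bar I := (I+(\ell))/(\ell) \subset \bar S$. I would record three facts. First, $\mu(\bar I) \le \mu(I)$, since $\bar I$ is a homomorphic image of $I \otimes_S \bar S$. Second, $\codim_{\bar S}(\bar S/\bar I) = c$: a general linear form lies in no minimal prime of $S/I$ of maximal dimension, so $\dim \bar S/\bar I = d-1$ and $\codim_{\bar S}(\bar S/\bar I) = (n-1)-(d-1) = c$. Third, $\bar S/\bar I$ again has a $q$-linear resolution. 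Indeed, a $q$-linear resolution forces $I$ to be generated in degree $q$, so $I_j = 0$ for $j < q$; as $\bar I \cong I/(I \cap (\ell))$ this gives $\bar I_j = 0$ for $j < q$, whence every minimal generator of $\bar I$ has degree $\ge q$. On the other hand $\reg(\bar S/\bar I) \le \reg(S/I) = q-1$, because Castelnuovo--Mumford regularity does not increase under a general hyperplane section; hence every minimal generator of $\bar I$ has degree $\le \reg(\bar S/\bar I)+1 \le q$, so $\bar I$ is generated in degree exactly $q$. A minimal graded free resolution of $\bar S/\bar I$ with $\beta_{1,j}=0$ for $j\neq q$ and $\beta_{i,j}=0$ for $j-i>q-1$ is then forced, degree by degree, to satisfy $\beta_{i,j}=0$ for all $j\neq q+i-1$; that is, it is $q$-linear.

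Applying this $d$ times reduces the statement to the case $d=0$. So assume $A := \bar S/\bar I$ is Artinian, with $\bar S = K[y_1,\ldots,y_c]$, $A$ has a $q$-linear resolution, and $\mu(I) \ge \mu(\bar I)$. Being Artinian, $A$ is Cohen--Macaulay, so the Auslander--Buchsbaum formula gives $\operatorname{pd}_{\bar S} A = c$, and the minimal free resolution is the pure resolution
\[
0 \to \bar S(-q-c+1)^{\beta_c} \to \cdots \to \bar S(-q-1)^{\beta_2} \to \bar S(-q)^{\beta_1} \to \bar S \to A \to 0,
\]
with $\beta_1 = \mu(\bar I)$. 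Since $\dim_K A < \infty$, the Hilbert series $\bigl(1 + \sum_{i=1}^{c}(-1)^i \beta_i t^{q+i-1}\bigr)\big/(1-t)^c$ of $A$ is a polynomial, so its numerator $N(t)$ is divisible by $(1-t)^c$, i.e.\ $N(1) = N'(1) = \cdots = N^{(c-1)}(1) = 0$. These $c$ conditions form a Vandermonde-type linear system in $\beta_1,\ldots,\beta_c$ whose unique solution (the Herzog--K\"uhl relations for a linear resolution) has $\beta_1 = \binom{c+q-1}{c-1}$. Therefore $\mu(I) \ge \mu(\bar I) = \beta_1 = \binom{c+q-1}{c-1}$.

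The part requiring care is the third fact of the reduction: one must invoke that the regularity does not jump under a general linear section, so that $q$-linearity is preserved, and that $\bar I$ remains generated in a single degree. Once the problem is Artinian, only the standard pure-resolution bookkeeping remains, and in fact it yields equality there, the final inequality coming solely from the possible drop of $\mu$ under the hyperplane sections.
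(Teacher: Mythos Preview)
The paper does not supply a proof of this lemma; it is quoted directly from Eisenbud--Goto \cite{EG} and used as a black box. So there is no ``paper's own proof'' to compare against, and your proposal is in effect filling in a cited result.

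Your argument is correct and follows the standard route. The reduction step is sound: for a general linear form $\ell$ the codimension is preserved, the regularity does not increase (this is the filter-regular / almost-regular element argument via local cohomology), and the lower bound $\beta_{i,j}(\bar S/\bar I)=0$ for $j<q+i-1$ follows inductively from $\bar I$ being generated in degree $\ge q$; together with $\reg\le q-1$ this forces $q$-linearity. In the Artinian case your Hilbert-series computation is exactly the Herzog--K\"uhl determination of the Betti numbers of a pure (here linear) resolution of a Cohen--Macaulay module. A quick sanity check confirming the value of $\beta_1$: once $\bar I_j=0$ for $j<q$ and $A_j=0$ for $j\ge q$, one has $A\cong \bar S/\mathfrak m^q$ as graded vector spaces, hence the same Betti numbers, and $\mu(\mathfrak m^q)=\binom{c+q-1}{c-1}$ on the nose.

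Two small remarks. First, you should note that the statement is vacuous when $c=0$, so one may assume $c\ge 1$ before cutting down; this guarantees $\bar I\neq 0$ at every stage. Second, your phrase ``Vandermonde-type linear system'' is accurate but you might simply invoke the Herzog--K\"uhl equations by name, or point to the explicit model $\bar S/\mathfrak m^q$, rather than leaving the linear algebra implicit.
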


A {\em lattice polytope} $\Pc \subset \RR^n$ is a polytope such that
any vertex of $\Pc$ belongs to $\ZZ^n$.
Let $K[\xb^{\pm 1}, s] =K[x_{1}^{\pm 1}, \ldots, x_{n}^{\pm 1}, s]$ be a Laurent polynomial ring in $n+1$ variables
over a field $K$.
If $\Pc \cap \ZZ^n = \{\ab_1,\ldots, \ab_m\}$,
then the {\em toric ring} $K[\Pc]$ of $\Pc$ is
the $K$-subalgebra of $K[\xb^{\pm 1}, s]$ generated by the monomials
${\bf x}^{\ab_1} s,\ldots,{\bf x}^{\ab_m} s \in K[\xb^{\pm 1}, s]$.
Furthermore, the {\em toric ideal} $I_{\Pc}$ of $\Pc$ is 
the defining ideal of $K[\Pc]$, i.e., the kernel of 
a surjective ring homomorphism
$\pi : K[y_1,\ldots, y_m] \rightarrow K[\Pc]$ defined by
$\pi(y_i) = {\bf x}^{\ab_i} s$ for $i=1,2,\ldots,m$.
It is known that $I_\Pc$ is generated by homogeneous binomials.
See \cite[Chapter 3]{binomialideals} for an introduction to toric rings and ideals.

Let $\Pc$ and $\Pc'$ be lattice polytopes in $\RR^n$.
The toric ring $K[\Pc']$ is called a {\em combinatorial pure subring}
 of the toric ring $K[\Pc]$ if $\Pc'$ is a face of $\Pc$.
Although this definition is different from that in \cite{cps},
they are equivalent (see \cite{Ocps}).
It is known \cite[Corollary~2.5]{cps} that,
if $K[\Pc']$ is a combinatorial pure subring of $K[\Pc]$, then
\[
\beta_{i,j} (I_{\Pc'}) \le 
\beta_{i,j} (I_{\Pc})
\]
for all $i$ and $j$.
Thus we have the following immediately.

\begin{Proposition}
Let $\Pc$ and $\Pc'$ be lattice polytopes
such that $K[\Pc']$ is a combinatorial pure subring of $K[\Pc]$.
Suppose that $K[\Pc']$ is not isomorphic to a polynomial ring.
If $K[\Pc]$ has a $q$-linear resolution, then so does $K[\Pc']$

\end{Proposition}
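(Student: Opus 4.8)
The plan is to read the conclusion directly off the Betti-number comparison recorded just above the proposition, using the hypothesis that $K[\Pc']$ is not a polynomial ring only to exclude the degenerate case in which the resolution is trivial. So the argument is short; its only content is translating the quoted inequality into the language of the definition of a $q$-linear resolution and then checking that $I_{\Pc'} \neq 0$.

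First I would pass from Betti numbers of the ideals to Betti numbers of the quotient rings. For a graded ideal $0 \neq I$ in a polynomial ring $T$ one has $\beta_{i,j}(T/I) = \beta_{i-1,j}(I)$ for all $i \geq 1$, while $\beta_{0,j}(T/I) = \beta_{0,j}(T)$ does not depend on $I$. Hence the inequality $\beta_{i,j}(I_{\Pc'}) \leq \beta_{i,j}(I_{\Pc})$ from \cite[Corollary~2.5]{cps}, valid because $\Pc'$ is a face of $\Pc$, yields $\beta_{i,j}(K[\Pc']) \leq \beta_{i,j}(K[\Pc])$ for all $i$ and $j$, where the Betti numbers are now those of the toric rings over their respective polynomial rings $K[y_1,\dots,y_m]$ and $K[y_1,\dots,y_{m'}]$. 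Since $K[\Pc]$ has a $q$-linear resolution, $\beta_{i,j}(K[\Pc]) = 0$ whenever $j \neq q + i - 1$; the inequality then forces $\beta_{i,j}(K[\Pc']) = 0$ whenever $j \neq q + i - 1$ as well.

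It remains to see that this vanishing genuinely says that $K[\Pc']$ has a $q$-linear resolution, i.e., that the resolution of $K[\Pc']$ is not the trivial one. This is exactly where the hypothesis enters: a toric ideal $I_{\Pc'}$ is generated by binomials and contains no variable, so $K[\Pc'] \cong K[y_1,\dots,y_{m'}]/I_{\Pc'}$ is isomorphic (as a graded $K$-algebra) to a polynomial ring precisely when $I_{\Pc'} = 0$. By assumption $I_{\Pc'} \neq 0$, so $K[\Pc']$ has a nonzero graded Betti table, which by the previous step is concentrated on the line $j = q + i - 1$; in particular $I_{\Pc'}$ is generated in degree $q$ and $\reg(K[\Pc']) = q-1$, so $K[\Pc']$ has a $q$-linear resolution.

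I do not expect a real obstacle here: the termwise comparison of Betti numbers is supplied by the cited corollary, and the passage between ideal and quotient resolutions is routine. The one point that must not be overlooked is the exclusion of the polynomial-ring case, since the empty resolution of a polynomial ring vacuously satisfies the vanishing condition for every $q$; the stated hypothesis is precisely what rules this out.
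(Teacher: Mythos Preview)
Your proposal is correct and matches the paper's approach: the paper gives no explicit proof at all, simply stating that the proposition follows ``immediately'' from the Betti-number inequality $\beta_{i,j}(I_{\Pc'}) \le \beta_{i,j}(I_{\Pc})$ cited just before it. Your write-up is precisely the unpacking of that word ``immediately,'' including the one nontrivial observation the paper leaves implicit, namely that the hypothesis ``$K[\Pc']$ is not a polynomial ring'' is equivalent to $I_{\Pc'} \neq 0$ and is needed so that the vanishing pattern actually pins down the integer $q$.
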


Let $G$ be a finite simple graph on the vertex set $[n]$ with the edge set $E(G)$.
Given an edge $e = \{i,j\} \in E(G)$, we set $\rho(e) = \eb_i + \eb_j \in \ZZ^n$.
Here $\eb_i$ is the $i$-th unit vector in $\RR^n$.
The \textit{edge polytope} $\Pc_G$ of $G$ is the convex hull of 
\[
\{
\rho(e) : e \in E(G)
\}.
\]
Then the toric ring $K[\Pc_G]$ of $\Pc_G$ is isomorphic to the edge ring $K[G]$ of $G$.
A subgraph $G'$ of a graph $G$ is called an {\em induced subgraph} of $G$
if there exists $V \subset [n]$ such that $G'$ is a graph on the vertex set $V$
and the edge set
\[
\{ \{i,j\} \in E(G) : i,j  \in V\}.
\]
It is easy to see that, if $G'$ is an induced subgraph of a graph $G$,
then $\Pc_{G'}$ is a face of $\Pc_G$.
Thus we have the following.

\begin{Lemma}
\label{inducedsubgraph}
Let $G$ be a finite connected graph and let $G'$ be
an induced subgraph of $G$.
Suppose that $K[G']$ is not isomorphic to a polynomial ring.
If $K[G]$ has a $q$-linear resolution, then
so does $K[G']$.
\end{Lemma}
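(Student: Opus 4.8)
The plan is to read the statement off the Proposition above, once $\Pc_{G'}$ has been identified explicitly as a face of $\Pc_G$. First I would fix the vertex set $V \subseteq [n]$ of the induced subgraph $G'$, so that $E(G') = \{\{i,j\} \in E(G) : i,j \in V\}$, and consider the linear form $\ell(\xb) = \sum_{i \in [n]\setminus V} x_i$ on $\RR^n$. Since every vertex $\rho(e) = \eb_i + \eb_j$ of $\Pc_G$ has all coordinates in $\{0,1\}$, the form $\ell$ is nonnegative on $\Pc_G$, and for an edge $e \in E(G)$ one has $\ell(\rho(e)) = 0$ precisely when both endpoints of $e$ lie in $V$, that is, precisely when $e \in E(G')$; it is here that the hypothesis that $G'$ is an \emph{induced} subgraph is used. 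Consequently the face $\Pc_G \cap \{\xb \in \RR^n : \ell(\xb) = 0\}$ of $\Pc_G$ equals $\conv\{\rho(e) : e \in E(G')\} = \Pc_{G'}$, so $\Pc_{G'}$ is a face of $\Pc_G$.

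Next I would invoke the combinatorial pure subring machinery recalled above: since $\Pc_{G'}$ is a face of $\Pc_G$, the ring $K[\Pc_{G'}]$ is a combinatorial pure subring of $K[\Pc_G]$, and under the isomorphisms $K[\Pc_{G'}] \cong K[G']$ and $K[\Pc_G] \cong K[G]$ this says that $K[G']$ is a combinatorial pure subring of $K[G]$. As $K[G']$ is assumed not isomorphic to a polynomial ring and $K[G]$ is assumed to have a $q$-linear resolution, the Proposition (applied with $\Pc = \Pc_G$ and $\Pc' = \Pc_{G'}$) immediately yields that $K[G']$ has a $q$-linear resolution.

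I do not expect a genuine obstacle here: the argument rests entirely on the elementary observation that induced subgraphs correspond to faces of the edge polytope, together with the inequality $\beta_{i,j}(I_{\Pc_{G'}}) \le \beta_{i,j}(I_{\Pc_G})$ for combinatorial pure subrings. The only points I would be careful about are the two standing hypotheses: the word ``induced'' is exactly what prevents $\Pc_G \cap \{\ell = 0\}$ from being strictly larger than $\Pc_{G'}$, and the assumption that $K[G']$ is not isomorphic to a polynomial ring is what makes the conclusion meaningful (it guarantees $I_{G'} \ne 0$, so that the resolution of $K[G']$ is genuinely nontrivial and its linear strand starts in degree $q$).
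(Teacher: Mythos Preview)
Your proposal is correct and takes essentially the same approach as the paper: the paper states just before the lemma that ``if $G'$ is an induced subgraph of $G$, then $\Pc_{G'}$ is a face of $\Pc_G$,'' and then deduces the lemma immediately from the preceding Proposition on combinatorial pure subrings. Your explicit linear form $\ell(\xb)=\sum_{i\in[n]\setminus V}x_i$ simply supplies the details behind the paper's ``it is easy to see.''
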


Let $\Pc \subset \RR^n$ be a lattice polytope of dimension $d$.
Then the {\em $\delta$-polynomial} (or {\em $h^*$-polynomial}) of $\Pc$ is a polynomial in $\lambda$ defined by
\[
\delta(\Pc, \lambda) = (1-\lambda)^{d+1} \left(  1+ \sum_{t=1}^\infty |t \Pc \cap \ZZ^n| \lambda^t \right),
\]
where $t \Pc =\{ t \ab : \ab \in \Pc\}$.
It is known that each coefficient of $\delta(\Pc, \lambda)$ is a nonnegative integer and the degree of $\delta(\Pc, \lambda)$
is at most $d$.
Let $\deg(\Pc) = \deg (\delta(\Pc, \lambda))$ and set $\codeg(\Pc) = d+1 - \deg(\Pc)$.
Then 
\[
\codeg (\Pc) = \min \{ r \in \ZZ_{>0} : \int (r \Pc) \cap \ZZ^n \neq \emptyset \},
\]
where $\int (r \Pc)$ is the relative interior of $r \Pc$ in $\RR^n$,
holds in general.
See, e.g., \cite[Part II]{HibiRedBook} for detailed information.
With some conditions, the degree of $\Pc$ gives a lower bound for $\reg (K[\Pc])$.
In fact, the following is known.

\begin{Lemma}[{\cite[Corollaries 3.2 and 3.4]{HMT}}]
\label{deglemma}
Let $G$ be a finite connected graph and let $G'$ be
a subgraph of $G$.
Then we have 
$\deg \Pc_{G'} \le \deg \Pc_{G} \le {\rm reg}(K[G])$.
\end{Lemma}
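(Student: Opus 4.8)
\medskip

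\noindent\textbf{Proof plan.}
This is really two inequalities of different flavour, $\deg\Pc_{G'}\le\deg\Pc_G$ and $\deg\Pc_G\le\reg(K[G])$, and I would prove them separately.

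For the first, the point is simply that $\Pc_{G'}\subseteq\Pc_G$: since $E(G')\subseteq E(G)$, the polytope $\Pc_{G'}$ is the convex hull of a subset of the lattice points $\{\rho(e):e\in E(G)\}$ generating $\Pc_G$. I would then invoke the monotonicity of $\delta$-polynomials under inclusion of lattice polytopes (a theorem of Stanley): if $\Qc\subseteq\Pc$ are lattice polytopes, then each coefficient of $\delta(\Qc,\lambda)$ is at most the corresponding coefficient of $\delta(\Pc,\lambda)$; in particular $\deg\delta(\Qc,\lambda)\le\deg\delta(\Pc,\lambda)$. Taking $\Qc=\Pc_{G'}$ and $\Pc=\Pc_G$ gives the claim. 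Note that only $E(G')\subseteq E(G)$ is used, so $G'$ need not be an induced subgraph.

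For the second, I would compare $R:=K[G]\cong K[\Pc_G]$ with its normalisation. Here $R$ is a standard graded domain with $\dim R=d+1$, where $d=\dim\Pc_G$, and its normalisation $A$ is the Ehrhart ring of $\Pc_G$: explicitly, $A=\bigoplus_{t\ge0}A_t\subset K[\xb^{\pm1},s]$ with $A_t$ the $K$-span of $\{\xb^{\ab}s^{t}:\ab\in t\Pc_G\cap\ZZ^n\}$, so $\dim_K A_t=|t\Pc_G\cap\ZZ^n|$ for all $t$. (That this $A$ really is the normalisation of $R$, i.e.\ that the affine semigroup of $K[G]$ is saturated in its group, is the standard fact underlying the isomorphism $K[\Pc_G]\cong K[G]$ recalled above.) Now argue in three steps. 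First, $A$ is a normal standard graded domain of dimension $d+1$, hence Cohen--Macaulay by Hochster's theorem; by the very definition of the $\delta$-polynomial, the Hilbert series of $A$ equals $\delta(\Pc_G,\lambda)/(1-\lambda)^{d+1}$, so the standard identity for Cohen--Macaulay graded rings (the degree of the $h$-polynomial equals the $a$-invariant plus the dimension) yields $a(A)+\dim A=\deg\Pc_G$. Second, the conductor of $R$ in $A$ is a nonzero ideal of $R$ (two domains sharing a fraction field), so $\dim(A/R)\le d$; applying the top local cohomology $\Coh{d+1}{-}$ to $0\to R\to A\to A/R\to0$ and using $\Coh{d+1}{A/R}=0=\Coh{d+2}{R}$, one obtains a graded surjection $\Coh{d+1}{R}\twoheadrightarrow\Coh{d+1}{A}$, whence $a(R)\ge a(A)$ (here $a(\cdot)$ denotes the top $a$-invariant). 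Third, from the local-cohomology description of regularity, $\reg(R)=\max_i\{a_i(R)+i\}$; taking $i=\dim R$ gives $\reg(R)\ge a(R)+\dim R\ge a(A)+\dim A=\deg\Pc_G$. Since $\reg(K[G])=\reg(R)$, this finishes the proof.

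The first inequality is essentially a citation, so the real work is the second, and its crux is the middle step: passing from $K[G]$ to its normalisation can only \emph{enlarge} the top local cohomology, so the $a$-invariant — and hence the regularity — of $K[G]$ dominates that of the transparent Ehrhart ring, whose degree invariant is $\deg\Pc_G$ by construction. The prerequisite I would be most careful about is precisely the identification of $A$ with the full Ehrhart ring of $\Pc_G$ (equivalently, $\Pc_G\cap\ZZ^n=\{\rho(e):e\in E(G)\}$ together with saturatedness of the edge semigroup), since that is what lets Ehrhart reciprocity feed directly into a statement about $K[G]$ itself.
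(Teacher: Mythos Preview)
The paper does not prove this lemma; it is imported from \cite{HMT}. Your overall architecture---Stanley's monotonicity of $\delta$-polynomials for $\deg\Pc_{G'}\le\deg\Pc_G$, and a comparison of $R=K[G]$ with its normalisation $A$ via top local cohomology for $\deg\Pc_G\le\reg(K[G])$---is sound and is essentially how these inequalities are obtained in that reference.

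There is, however, a genuine confusion in your identification of $A$. You write that $A$ being the Ehrhart ring of $\Pc_G$ amounts to ``the affine semigroup of $K[G]$ [being] saturated in its group,'' and you repeat ``saturatedness of the edge semigroup'' as the key prerequisite in your closing paragraph. But that statement says $R$ is already normal, i.e.\ $R=A$; this is false in general (edge rings are normal precisely when $G$ satisfies the odd-cycle condition) and would in any case make the whole normalisation step vacuous. What you actually need is that the \emph{group} $\ZZ S$ generated by $S=\{(\rho(e),1):e\in E(G)\}$ coincides with $(\ZZ^n\times\ZZ)\cap\operatorname{span}_\RR(S)$, so that the saturation of $S$ in $\ZZ S$ (which gives the normalisation) equals the full Ehrhart semigroup $\{(a,t):a\in t\Pc_G\cap\ZZ^n\}$. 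This lattice equality does hold for any connected $G$ (a spanning tree, together with one edge closing an odd cycle in the non-bipartite case, already generates the lattice on the right), but it is a different fact from saturatedness and is the point you should isolate and verify.

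A smaller slip: $A$ need not be standard graded (the Ehrhart ring is generated in degree $1$ only when $\Pc_G$ has the integer decomposition property). Fortunately your argument does not actually use this---Hochster's theorem and the identity $a(A)=-\codeg(\Pc_G)$, coming from the description of $\omega_A$ by interior lattice points, hold for arbitrary normal affine semigroup rings---so simply drop the adjective.
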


Recall that, if $K[G]$ has a $q$-linear resolution, then
 $\reg(K[G]) = q - 1$.
Hence, by Lemma~\ref{deglemma}, it turns out that $\deg (\Pc_G) < q$
is a necessary condition for $K[G]$ to have a $q$-linear resolution.

\section{Edge polytopes and edge rings}
\label{sec:edgepolyandring}

In this section, we study the degree of edge polytopes and 
the set of generators of the toric ideal of edge rings.
The results in the present section give necessary conditions for $K[G]$
to have a $q$-linear resolution.

\subsection{Dimension of edge polytopes}

First, we introduce a result on graphs with a small cyclotomic number since the cyclotomic number
is related to the codimension of $K[G]$.
Let $G$ be a connected graph with $n$ vertices and $m$ edges.
Then $c(G) = m-n+1$ is called the {\em cyclotomic number}
(or the {\em circuit rank}) of $G$.
Note that a connected graph $G$ satisfies $c(G)=0$ if and only if $G$ is a tree.
A connected graph with $c(G)=1,2,3$ is said to be {\em unicyclic},
{\em bicyclic}, and {\em tricyclic}, respectively.
It is known \cite{Ahrens} that the number of the cycles in a connected graph $G$ is
at least $c(G)$ and at most $2^{c(G)} -1$.
In particular, a connected graph is unicyclic if and only if
it has exactly one cycle.
The {\em edge subdivision operation} for an edge
$e=\{u,v\}$ of a graph $G$ is the deletion of $e$ from $G$ and the addition of two edges 
$\{u,w\}$ and $\{w,v\}$ along with the new vertex $w$.
A graph obtained from a graph $G$
 by a sequence of edge subdivision operations 
is called a {\em subdivision} of $G$.
Bicyclic and tricyclic graphs are characterized as follows.

\begin{Proposition}[{e.g., \cite{VM}}]
\label{bitri}
Let $G$ be a connected simple graph.
\begin{itemize}
\item[{\rm (a)}]
$G$ is bicyclic if and only if $G$ is obtained by adding some trees to a graph $G_0$,
where $G_0$ is a subdivision of one of the multigraphs in Fig.~1.

\item[{\rm (b)}]
$G$ is tricyclic if and only if $G$ is obtained by adding some trees to a graph $G_0$,
where $G_0$ is a subdivision of one of the  multigraphs in Fig.~2.
\end{itemize}
\end{Proposition}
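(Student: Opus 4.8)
The plan is to reduce an arbitrary connected simple graph to a small ``skeleton'' by two operations that leave the cyclotomic number unchanged, and then to run a finite case analysis on the skeleton. First I would record the two basic invariance facts: attaching a pendant tree on $k$ vertices changes $(n,m)$ to $(n+k,m+k)$, and subdividing an edge changes $(n,m)$ to $(n+1,m+1)$; in both cases $c(G)=m-n+1$ is unchanged. The ``if'' direction is then immediate, since each multigraph $H$ displayed in Fig.~1 (resp.\ Fig.~2) has $c(H)=2$ (resp.\ $3$), so any graph obtained from a subdivision of such an $H$ by adding trees is bicyclic (resp.\ tricyclic).

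For the ``only if'' direction, suppose $c(G)=c\in\{2,3\}$. I would first pass to the $2$-core $G_0$ of $G$, obtained by successively deleting vertices of degree at most $1$; then $G$ is exactly $G_0$ with a family of pendant trees attached at vertices of $G_0$, and $c(G_0)=c\ge 2$. Hence $G_0$ has minimum degree $\ge 2$ and, having cyclotomic number $\ge 2$, is not a single cycle, so it has at least one vertex of degree $\ge 3$. Next I would topologically reduce $G_0$: suppress every vertex of degree $2$, i.e.\ replace the length-two path through it by a single edge, allowing loops and parallel edges to appear, so as to obtain a connected multigraph $H$ with minimum degree $\ge 3$ and $c(H)=c(G_0)=c$. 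Writing $n'=|V(H)|$ and $m'=|E(H)|$, the inequality $3n'\le\sum_{v}\deg v=2m'=2(n'+c-1)$ forces $n'\le 2(c-1)$.

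The last step is to enumerate all connected multigraphs $H$ with minimum degree $\ge 3$, cyclotomic number $c$, and at most $2(c-1)$ vertices, and to verify that this list coincides with Fig.~1 when $c=2$ and with Fig.~2 when $c=3$. For $c=2$ one only has $n'\le 2$, so it suffices to list the ways to distribute two edges (when $n'=1$) or three edges (when $n'=2$) as loops and parallel edges subject to $\deg\ge 3$. For $c=3$ one has $n'\le 4$, and I would organize the search by the admissible degree sequences $(6)$, $(3,5)$, $(4,4)$, $(3,3,4)$, $(3,3,3,3)$, realizing each by connected multigraphs. This enumeration is where the real work lies: it is finite but delicate because of loops and parallel edges, and one must make sure no case is overlooked. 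Once $H$ is identified, $G_0$ is a subdivision of $H$ and $G$ is $G_0$ with trees added, which completes the argument. Since the statement is taken from the literature, in the paper itself this case analysis may simply be cited rather than carried out in full.
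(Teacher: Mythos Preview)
Your outline is correct and is the standard argument for this classification: strip pendant trees to reach the $2$-core, then suppress degree-$2$ vertices to obtain a connected multigraph $H$ with $\delta(H)\ge 3$ and $c(H)=c$, use $3n'\le 2m'=2(n'+c-1)$ to bound $n'\le 2(c-1)$, and finish by a finite enumeration of the admissible degree sequences. Your list of sequences for $c=2,3$ is right, and you are also right that the enumeration for $c=3$ is where the bookkeeping lies.

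As you anticipated in your last sentence, the paper does not give its own proof of this proposition at all: it is stated with the attribution ``e.g., \cite{VM}'' and used as a black box. So there is nothing to compare against beyond noting that your sketch supplies exactly the argument the paper chooses to cite.
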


\begin{figure}[h]
\begin{center}
\begin{tikzpicture} [scale=0.65]
\draw
(-8,0) circle(1)
(-6,0) circle (1);
\fill 
(-7,0) circle (2pt);

\draw
(-2,0) circle(1)
(2,0) circle(1)
(-1,0)--(1,0);
\fill
(-1,0) circle (2pt)
(1,0) circle (2pt);

\draw
(6,0) circle (1) 
(5,0)--(7,0);
\fill 
(5,0) circle (2pt)
(7,0) circle (2pt) ;
 \end{tikzpicture}
 \end{center}
\caption{Multigraphs for bicyclic graphs \cite{VM}}
\end{figure}
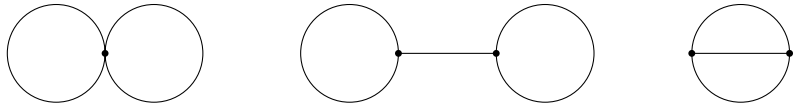
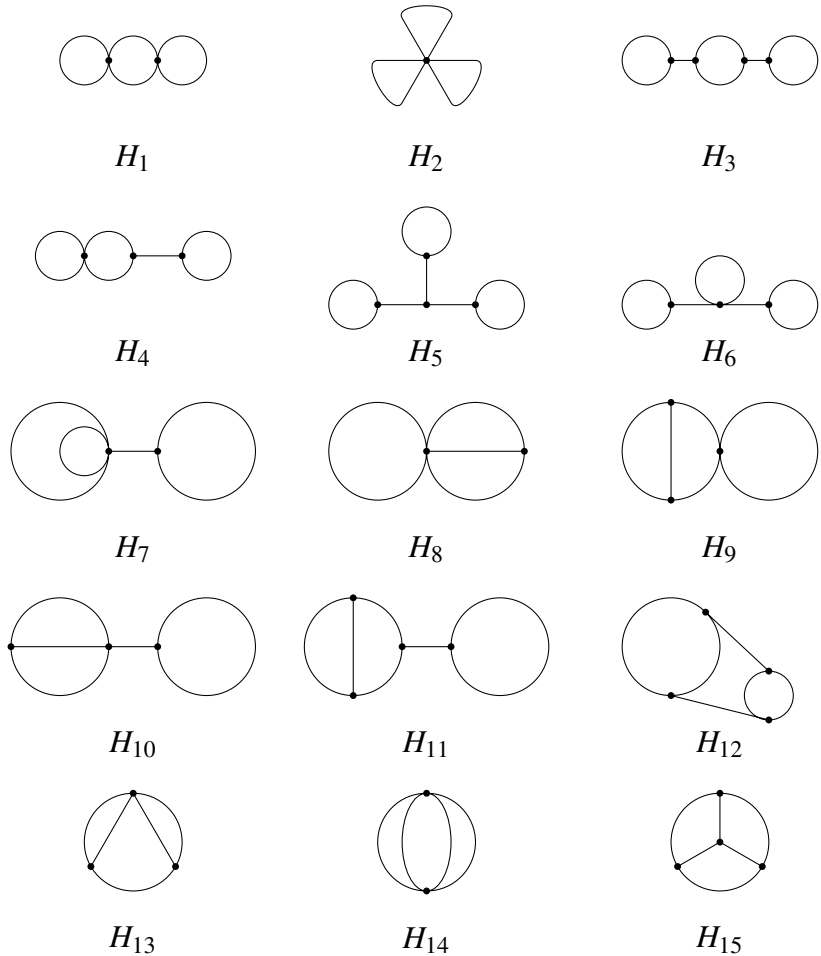
\begin{figure}
\begin{center}
\begin{tikzpicture} [scale=0.65]
\coordinate (v1) at (-6,6) node at (v1) {$H_{1}$};
 \draw
 (-6,8) circle (0.5)
 (-5,8) circle (0.5)
 (-7,8) circle (0.5);
 \fill
 (-5.5,8) circle (2pt)
 (-6.5,8) circle (2pt);
 
\coordinate (v2) at (0,6) node at (v2) {$H_{2}$};
\draw 
(0,8)--(-1/2,{8+sqrt(3)/2})
(0,8)--(1/2,{8+sqrt(3)/2})
(1/2,{8+sqrt(3)/2}) to [out=60,in=120]  (-1/2,{8+sqrt(3)/2})
(0,8)--(-1,8)
(0,8)--(-1/2,{8-sqrt(3)/2})
(-1,8) to [out=180,in=240]  (-1/2,{8-sqrt(3)/2})
(0,8)--(1,8)
(0,8)--(1/2,{8-sqrt(3)/2})
(1,8) to [out=360,in=300]  (1/2,{8-sqrt(3)/2});
\fill
(0,8) circle (2pt);

\coordinate (v3) at (6,6) node at (v3) {$H_{3}$};
\draw
 (6,8) circle (0.5)
 (4.5,8) circle (0.5)
 (7.5,8) circle (0.5)
 (5,8)--(5.5,8)
 (6.5,8)--(7,8);
 \fill
 (5,8) circle (2pt)
 (5.5,8) circle (2pt)
 (6.5,8) circle (2pt)
 (7,8) circle (2pt);
 
\coordinate (v4) at (-6,2) node at (v4) {$H_{4}$};
 \draw
 (-6,4)--(-5,4)
 (-4.5,4) circle (0.5)
 (-7.5,4) circle (0.5)
 (-6.5,4) circle (0.5);
 \fill
 (-7,4) circle (2pt)
 (-6,4) circle (2pt)
 (-5,4) circle (2pt);
 
 \coordinate (v5) at (0,2) node at (v5) {$H_{5}$};
 \draw
 (0,4.5) circle (0.5)
 (-1.5,3) circle (0.5)
 (1.5,3) circle (0.5)
 (0,3)--(0,4)
 (-1,3)--(1,3);
 \fill
 (0,3) circle (2pt)
 (0,4) circle (2pt)
 (-1,3) circle (2pt)
 (1,3) circle (2pt);
 
 \coordinate (v6) at (6,2) node at (v6) {$H_{6}$};
 \draw
 (6,3.5) circle (0.5)
 (4.5,3) circle (0.5)
 (7.5,3) circle (0.5)
 (5,3)--(7,3);
 \fill
 (6,3) circle (2pt)
 (5,3) circle (2pt)
 (7,3) circle (2pt);
 
  \coordinate (v7) at (-6,-2) node at (v7) {$H_{7}$};
 \draw
 (-6.5,0)--(-5.5,0)
 (-4.5,0) circle (1)
 (-7.5,0) circle (1)
 (-7,0) circle (0.5);
 \fill
 (-6.5,0) circle (2pt)
 (-5.5,0) circle (2pt);
 
\coordinate (v8) at (0,-2) node at (v8) {$H_{8}$};
 \draw
 (-1,0) circle (1)
 (1,0) circle (1)
 (0,0)--(2,0);
 \fill
 (0,0) circle (2pt)
 (2,0) circle (2pt);
 
 \coordinate (v9) at (6,-2) node at (v9) {$H_{9}$};
 \draw
 (5,0) circle (1)
 (7,0) circle (1)
 (5,1)--(5,-1);
 \fill
 (6,0) circle (2pt) 
 (5,1) circle (2pt)
 (5,-1) circle (2pt);
 
 \coordinate (v10) at (-6,-6) node at (v10) {$H_{10}$};
  \draw
 (-8.5,-4)--(-5.5,-4)
 (-4.5,-4) circle (1)
 (-7.5,-4) circle (1);
 \fill
 (-6.5,-4) circle (2pt)
 (-5.5,-4) circle (2pt)
 (-8.5,-4) circle (2pt);
 
  \coordinate (v11) at (0,-6) node at (v11) {$H_{11}$};
  \draw
 (-0.5,-4)--(0.5,-4)
 (-1.5,-3)--(-1.5,-5)
 (1.5,-4) circle (1)
 (-1.5,-4) circle (1);
 \fill
 (-0.5,-4) circle (2pt)
 (0.5,-4) circle (2pt)
 (-1.5,-3) circle (2pt)
 (-1.5,-5) circle (2pt);
 
 \coordinate (v12) at (6,-6) node at (v12) {$H_{12}$};
\draw
(5,-4) circle(1)
(7,-5) circle(0.5)
({5+sqrt(2)/2},{-4+sqrt(2)/2})--(7,-4.5)
(5,-5)--(7,-5.5);
\fill
({5+sqrt(2)/2},{-4+sqrt(2)/2}) circle (2pt)
(7,-4.5) circle (2pt)
(5,-5) circle (2pt)
(7,-5.5)circle (2pt);

\coordinate (v13) at (-6,-10) node at (v13) {$H_{13}$};
\draw
(-6,-8) circle (1)
(-6,-7)--({-6-sqrt(3)/2},-8.5)
(-6,-7)--({-6+sqrt(3)/2},-8.5);
\fill
(-6,-7) circle (2pt)
({-6-sqrt(3)/2},-8.5) circle (2pt)
({-6+sqrt(3)/2},-8.5) circle(2pt);

\coordinate (v14) at (0,-10) node at (v14) {$H_{14}$};
\draw
(0,-8) circle (1)
(0,-8) circle [x radius=0.5,y radius=1];
\fill
(0,-7) circle (2pt)
(0,-9) circle (2pt);

\coordinate (v15) at (6,-10) node at (v15) {$H_{15}$};
\draw
(6,-8) circle (1)
(6,-8)--(6,-7)
(6,-8)--({6-sqrt(3)/2},-8.5)
(6,-8)--({6+sqrt(3)/2},-8.5);
\fill
(6,-7) circle (2pt)
(6,-8) circle (2pt)
({6-sqrt(3)/2},-8.5) circle (2pt)
({6+sqrt(3)/2},-8.5) circle (2pt);

 \end{tikzpicture}  
 \end{center}
\caption{Multigraphs for tricyclic graphs \cite{VM}}
\end{figure}

The following proposition is essentially proved in \cite[Theorem~2.5]{GKS}.

\begin{Proposition}
Let $G$ be a finite graph with $n$ vertices.
Then 
\[
\dim \Pc_G = n-r(G) -1
,\]
where $r(G)$ is the number of connected components of $G$ that are bipartite.
\end{Proposition}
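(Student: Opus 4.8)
The plan is to reduce the assertion to a classical computation of the rank of the unsigned vertex--edge incidence matrix. Let $A_G$ denote the $n \times |E(G)|$ matrix whose columns are the vectors $\rho(e) = \eb_i + \eb_j$ for $e = \{i,j\} \in E(G)$; thus $A_G$ is the $(0,1)$ incidence matrix of $G$. Since every column of $A_G$ has coordinate sum $2$, the all-ones row vector equals $\tfrac12$ times the sum of the $n$ rows of $A_G$, so appending it to $A_G$ does not change the rank. Because the affine dimension of a finite point configuration in $\RR^n$ equals the rank of the matrix of those points augmented by a row of ones, minus $1$, applying this to $\{\rho(e) : e \in E(G)\}$ gives $\dim \Pc_G = \operatorname{rank} A_G - 1$. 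Hence it suffices to prove that $\operatorname{rank} A_G = n - r(G)$, the rank being taken over $\RR$ (equivalently over $\QQ$).

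To compute $\operatorname{rank} A_G$ I would instead compute the dimension of the left kernel $N = \{ y \in \RR^n : y A_G = 0\}$ (with $y$ a row vector), since $\operatorname{rank} A_G = n - \dim N$ by rank--nullity. A vector $y$ lies in $N$ if and only if $y_i + y_j = 0$ for every edge $\{i,j\} \in E(G)$. This system splits along the connected components of $G$: if $G$ is the disjoint union of $G_1, \ldots, G_k$, then $A_G$ is block diagonal and $N = N_1 \oplus \cdots \oplus N_k$, where $N_t$ is the corresponding left kernel of $G_t$. So it is enough to determine $\dim N$ for a connected graph.

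For $G$ connected, fix a vertex $v$ and put $y_v = c$. Walking along any path from $v$ and using $y_i = -y_j$ for adjacent vertices forces $y_w = (-1)^{d(v,w)} c$ for every vertex $w$, where $d(v,w)$ is the graph distance; hence $\dim N \le 1$. If $G$ is bipartite with parts $X$ and $Y$, then setting $y_i = c$ for $i \in X$ and $y_i = -c$ for $i \in Y$ is consistent and yields a nonzero element of $N$ for $c \ne 0$, so $\dim N = 1$. If $G$ is non-bipartite, it contains an odd closed walk through $v$, and transporting the relation $y_i = -y_j$ around it forces $c = -c$, hence $c = 0$ and $y = 0$, so $\dim N = 0$. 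Summing over the components gives $\dim N = r(G)$, whence $\operatorname{rank} A_G = n - r(G)$ and $\dim \Pc_G = n - r(G) - 1$.

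The one point that needs care --- and should be stated explicitly --- is that the rank is computed in characteristic zero, which is legitimate here since $\Pc_G \subset \RR^n$; in characteristic $2$ the relations $y_i + y_j = 0$ would no longer detect odd cycles and the count would fail. The rest is routine: the coordinate-sum-$2$ reduction, the additivity of the left kernel over components, the odd-cycle argument (this is exactly the standard proof that the incidence matrix of a connected graph has full rank precisely when the graph is non-bipartite), and a brief check of the degenerate cases (isolated vertices, or $G$ with no edges, where $\Pc_G = \emptyset$ and $\dim \Pc_G = -1 = n - r(G) - 1$). I do not anticipate a genuine obstacle; the only mild subtlety is the bookkeeping in the disconnected case.
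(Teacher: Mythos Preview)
Your argument is correct. The paper does not supply its own proof of this proposition; it merely attributes the result to \cite[Theorem~2.5]{GKS}, whose content is precisely the rank computation for the unsigned incidence matrix that you carry out. So your approach --- reducing $\dim \Pc_G$ to $\operatorname{rank} A_G - 1$ via the coordinate-sum-$2$ observation, then computing the left kernel componentwise --- is essentially the same as the cited argument, just written out in full rather than delegated to the reference.
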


In particular, if $G$ is a connected graph with $n$ vertices and $m$ edges, then
\begin{eqnarray*}
\dim (K[G]) &=& \dim \Pc_G  +1=n-r(G),\\
{\rm codim}(K[G]) &=& m-  \dim (K[G]) =m-n+r(G) =
\left\{
\begin{array}{cc}
c(G) & G \mbox{ is bipartite},\\
c(G)-1 & \mbox{otherwise}.
\end{array}
\right.
\end{eqnarray*}
Note that the following conditions are equivalent:
\begin{itemize}
\item[(i)]
$K[G]$ is isomorphic to a polynomial ring;
\item[(ii)]
${\rm codim}(K[G])=0$;
\item[(iii)]
$\Pc_G$ is a simplex.
\end{itemize}
Graphs satisfying such conditions are completely classified.

\begin{Proposition}[{\cite[Lemmas 5.5 and 5.6]{binomialideals}}]
\label{simplex}
Let $G$ be a finite connected graph with $n$ vertices and $m$ edges.
Then $K[G]$ is isomorphic to a polynomial ring
 if and only if $G$ satisfies one of the following{\rm :}
\begin{itemize}
\item[{\rm (a)}]
$m=n-1$ and $G$ is a tree{\rm ;}
\item[{\rm (b)}]
$m=n$ and $G$ has exactly one cycle $C$.
In addition, $C$ is an odd cycle.
\end{itemize}
\end{Proposition}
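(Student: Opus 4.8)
The plan is to read off the statement directly from the codimension computation established just above. Recall that conditions (i), (ii), (iii) listed immediately before the proposition are equivalent, so $K[G]$ is isomorphic to a polynomial ring if and only if $\codim(K[G])=0$; and for a connected graph $G$ with $n$ vertices and $m$ edges we have $\codim(K[G]) = m-n+r(G)$, where $r(G)=1$ if $G$ is bipartite and $r(G)=0$ otherwise. So the whole task reduces to analyzing when $m-n+r(G)=0$, split according to whether $G$ is bipartite.

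First I would handle the bipartite case. If $G$ is bipartite then $r(G)=1$, so $\codim(K[G])=0$ is equivalent to $m=n-1$, which for a connected graph is exactly the condition that $G$ be a tree. Conversely every tree is connected with $m=n-1$ and is bipartite, hence has $\codim(K[G])=0$. This gives alternative (a).

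Next the non-bipartite case. If $G$ is not bipartite then $r(G)=0$, so $\codim(K[G])=0$ is equivalent to $m=n$, i.e. $c(G)=m-n+1=1$, so $G$ is unicyclic. By the bound on the number of cycles recalled above (a connected graph has between $c(G)$ and $2^{c(G)}-1$ cycles), $c(G)=1$ forces $G$ to contain exactly one cycle $C$. Since a connected graph is bipartite precisely when it has no odd cycle, the non-bipartiteness of $G$ forces $C$ to be an odd cycle. Conversely, a connected unicyclic graph whose unique cycle is odd is non-bipartite and satisfies $m=n$, hence $\codim(K[G])=0$. This gives alternative (b), and combining the two cases yields the claimed equivalence.

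I do not expect a genuine obstacle here: once the codimension formula $\codim(K[G])=m-n+r(G)$ is in hand, the argument is a short bookkeeping exercise resting on two standard facts — that bipartiteness of a connected graph is equivalent to the absence of odd cycles, and that a connected graph with cyclomatic number $1$ has exactly one cycle. The only point that warrants care is invoking these two facts in both directions so that the ``if'' and ``only if'' parts of the classification are each fully justified, in particular checking that trees are automatically bipartite and that an odd unicyclic graph is automatically non-bipartite, which is what keeps the two cases from overlapping or leaving a gap.
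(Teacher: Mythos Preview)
Your argument is correct. The paper does not actually supply its own proof of this proposition; it cites the result from \cite[Lemmas~5.5 and~5.6]{binomialideals} and states it without proof. What you have done is exactly the natural deduction from the material the paper sets up immediately beforehand --- the equivalence of (i), (ii), (iii) and the formula $\codim(K[G])=m-n+r(G)$ --- so your proof is the short argument the paper implicitly has in mind when it juxtaposes that formula with the cited proposition.
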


\subsection{Degree of edge polytopes}

In the present subsection, we give several sufficient conditions for graphs $G$
to satisfy $\deg (\Pc_G)\ge q$.
The following two lemmas are given in \cite{Tbipartite}.

\begin{Lemma}[{\cite[Lemmas 2.1 and 2.2]{Tbipartite}}]
\label{T2lemma}
Let $q  \ge 3$ be an integer.
If $G$ has two even cycles $C_1$ and $C_2$ of length $2q$
having at most one common vertex,
then we have $\deg \Pc_{G} \ge q$.
\end{Lemma}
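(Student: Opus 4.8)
The plan is to bound $\deg\Pc_G$ from below by restricting to the subgraph $G'=C_1\cup C_2$ and producing an integer point lying in a small dilate of the \emph{relative interior} of $\Pc_{G'}$. Since $G'$ is a subgraph of the connected graph $G$, Lemma~\ref{deglemma} reduces the task to showing $\deg\Pc_{G'}\ge q$. First I would pin down the dimension: $G'$ is a union of even cycles, hence bipartite, with $4q$ vertices and $2$ bipartite connected components when $C_1$ and $C_2$ are disjoint, and with $4q-1$ vertices and $1$ bipartite component when they share a vertex; either way the dimension formula $\dim\Pc_{G'}=n'-r(G')-1$ recorded above gives $\dim\Pc_{G'}=4q-3$.

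Next I would exhibit an interior lattice point. On a single $2q$-cycle, because $2q$ is even one may weight alternate edges by $t$ and the remaining edges by $1-t$ with $0<t<1$; the resulting combination of its $2q$ edge vectors $\rho(e)$ equals the all-ones vector on that cycle's vertex set and has coefficient sum $q$. Carrying this out on $C_1$ and on $C_2$ simultaneously (say with $t=1/2$ on both), one gets: when the cycles are disjoint, a strictly positive combination of every $\rho(e)$, $e\in E(G')$, representing the all-ones vector $\ab$ on $V(G')$ with coefficient sum $2q$; when the cycles meet at a vertex $v$, the contributions at $v$ add to $2$ and elsewhere to $1$, so the same procedure represents $\ab=2\eb_v+\sum_{w\ne v}\eb_w$ as a strictly positive combination of all $\rho(e)$ with coefficient sum $2q$. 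In both cases $\tfrac{1}{2q}\ab$ is a convex combination of the generators of $\Pc_{G'}$ with all coefficients positive, hence lies in the relative interior of $\Pc_{G'}$; therefore $\ab\in\int(2q\,\Pc_{G'})\cap\ZZ^n$ and $\codeg\Pc_{G'}\le 2q$. Then $\deg\Pc_{G'}=\dim\Pc_{G'}+1-\codeg\Pc_{G'}\ge(4q-3)+1-2q=2q-2\ge q$ (as $q\ge 2$), and Lemma~\ref{deglemma} yields $\deg\Pc_G\ge\deg\Pc_{G'}\ge q$.

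The only genuinely delicate points are the two bookkeeping facts that drive the last chain of inequalities: that a strictly positive convex combination of a generating set of a polytope lies in its relative interior (so the constructed $\ab$ really is an interior point and $\codeg$ is controlled), and that in the disjoint case $G'$ is disconnected, so $r(G')=2$ must be used in the dimension count. Neither is hard, but both must be stated carefully; the rest is a one-line computation. A more computational alternative would be to describe the lattice points of $\int(t\,\Pc_{G'})$ directly and minimize $t$, but the explicit alternating weights above are the cleanest way to certify $\codeg\Pc_{G'}\le 2q$.
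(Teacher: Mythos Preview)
Your argument is correct. The paper does not supply its own proof of this lemma; it is quoted from \cite{Tbipartite}. Your method---computing $\dim\Pc_{G'}$ via the formula $n'-r(G')-1$, exhibiting a lattice point in $\int(2q\,\Pc_{G'})$ by taking the uniform weight $1/2$ on every edge of $C_1\cup C_2$, and then invoking Lemma~\ref{deglemma}---is exactly the template the paper itself uses in its proofs of Lemmas~\ref{degqqq} and~\ref{codimension2lemma} (see especially Case~1 of the latter, which treats a union of three cycles in the same way). So while there is no in-paper proof to compare against, your approach matches the surrounding style precisely and would slot in without friction.

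One small remark: your parenthetical ``as $q\ge 2$'' in the final inequality is fine, but since the lemma is only stated for $q\ge 3$ you may as well write that; and your identification of the two ``delicate points'' is apt---the paper silently uses both facts throughout (e.g.\ the interior-point claim is used verbatim in the one-line proof of Lemma~\ref{degqqq}).
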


\begin{Lemma}[{\cite[Lemmas 2.3 and 2.4]{Tbipartite}}]
\label{T3lemma}
Let $q  \ge 3$ be an integer.
If $G$ has a bipartite subgraph obtained by adding a path 
to an even cycle, and if the length of every cycle in $G$ is $2q$,
then we have $\deg \Pc_{G} \ge q$.
\end{Lemma}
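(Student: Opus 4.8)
The plan is to reduce to the bipartite subgraph $H$ supplied by the hypothesis and to prove $\deg \Pc_H \ge q$; then, since $H$ is a subgraph of $G$, Lemma~\ref{deglemma} gives $\deg \Pc_G \ge \deg \Pc_H \ge q$. First I would pin down the structure of $H$. It is obtained from an even cycle $C$ by adding a path (an ear) whose two endpoints are distinct vertices of $C$ and whose interior vertices are new, so $H$ is a generalized theta graph: two vertices $u, v$ joined by three internally disjoint paths of lengths $a, b, c$, whose only cycles have lengths $a+b$, $b+c$, $c+a$. Because every cycle of $G$, hence of $H$, has length $2q$, we get $a = b = c = q$, so $H = \Theta_{q,q,q}$. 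This graph is connected and bipartite (all its cycles are even), has $3q-1$ vertices and $3q$ edges, and therefore $\dim \Pc_H = (3q-1) - 1 - 1 = 3q-3$ by the formula $\dim \Pc_G = n - r(G) - 1$.

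Next I would reformulate the target. By the description of $\codeg$ recalled in Section~\ref{section:ql}, the inequality $\deg \Pc_H \ge q$ is equivalent to $\codeg \Pc_H \le \dim \Pc_H + 1 - q = 2q-2$, that is, to the existence of a lattice point in $\int(r\Pc_H)$ for some $r$ with $1 \le r \le 2q-2$. I will use the standard fact that any vector of the form $\sum_{e \in E(H)} \mu_e \rho(e)$ with all $\mu_e > 0$ and $\sum_{e} \mu_e = r$ lies in $\int(r\Pc_H)$; thus it suffices to exhibit one lattice vector admitting such a representation with $r \le 2q-2$.

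For the construction, take $\alpha = \sum_{w \in V(H)} \eb_w$ when $q$ is odd, and $\alpha = \eb_u + \sum_{w \in V(H)} \eb_w$ when $q$ is even, where $u$ is one of the two branch vertices of $H$. On each path $u = w_0, w_1, \dots, w_q = v$ with consecutive edges $f_1, \dots, f_q$, requiring the $\alpha$-coordinate at every interior vertex $w_j$ to equal $1$ forces $\mu_{f_j} + \mu_{f_{j+1}} = 1$ for $1 \le j \le q-1$, hence the $\mu_{f_j}$ alternate between some value $x$ and $1-x$ along the path; one then checks that the prescribed coordinates at $u$ and $v$ are met by taking $x = 1/3$ on all three paths when $q$ is odd and $x = 2/3$ on all three when $q$ is even. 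All coefficients are then positive, and adding them gives $r = (3q-1)/2$ in the odd case and $r = 3q/2$ in the even case; in both cases $r \le 2q-2$ for every $q \ge 3$. Hence $\codeg \Pc_H \le 2q-2$, so $\deg \Pc_H \ge q$, which finishes the argument.

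The hard part is the structural reduction rather than the arithmetic: one must argue that the hypotheses force $H$ to be precisely the balanced theta graph $\Theta_{q,q,q}$ --- this uses that a theta graph has exactly three cycles --- and then verify that the candidate vector really sits in the relative interior of the claimed dilate, i.e.\ that the alternating coefficients can be chosen strictly positive while matching the coordinates at the two branch vertices. One also has to be careful about the meaning of ``adding a path to an even cycle'': it must join two distinct vertices of the cycle, for attaching a pendant path to $C_{2q}$ yields an edge polytope with $\deg = q-1$ and the reduction to $H$ would fail.
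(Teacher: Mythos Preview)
The paper does not actually prove this lemma: it is quoted verbatim from \cite[Lemmas~2.3 and~2.4]{Tbipartite} and used as a black box, so there is no in-paper argument to compare against. That said, your proof is correct and entirely in the spirit of the interior-lattice-point constructions the paper carries out in Lemma~\ref{codimension2lemma}. The identification of $H$ as the balanced theta graph $\Theta_{q,q,q}$ is forced exactly as you say (the three cycle lengths $a+b,\,b+c,\,c+a$ all equal $2q$ forces $a=b=c=q$), the dimension count $\dim\Pc_H=3q-3$ is right, and your alternating weights $x=1/3$ (odd $q$) and $x=2/3$ with the extra $\eb_u$ (even $q$) produce a lattice point in $\int(r\Pc_H)$ with $r=(3q-1)/2$, respectively $r=3q/2$; in both cases $r$ is an integer and $3q-2-r\ge q$ holds for all $q\ge 3$. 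Your caveat about the phrase ``adding a path'' is also well taken: the intended meaning is indeed an ear attached at two distinct vertices of the cycle, as is clear from how the paper invokes the lemma (for bicyclic bipartite subgraphs $C\cup C'$ in the proof of Lemma~\ref{codimension2lemma}).
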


We now give two important lemmas about sufficient conditions for graphs $G$
to satisfy $\deg (\Pc_G)\ge q$.

\begin{Lemma}
\label{degqqq}
Let $q  \ge 3$ be an integer.
If $G$ has an even cycle of length $ > 2q$, then we have 
$\deg (\Pc_G)\ge q$.
\end{Lemma}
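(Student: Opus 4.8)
The plan is to reduce to the case of a single even cycle and then exhibit an explicit lattice point in the relative interior of a suitable dilation. Let $C\subseteq G$ be the even cycle of length $2\ell$ provided by the hypothesis; since $2\ell>2q$ and $2\ell$ is even, we have $\ell\ge q+1$. Because $C$ is a subgraph of $G$, Lemma~\ref{deglemma} gives $\deg\Pc_{C}\le\deg\Pc_{G}$, so it suffices to prove $\deg\Pc_{C}\ge q$.

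First I would fix notation: relabel so that $C$ has vertices $1,\dots,2\ell$ and edges $e_{i}=\{i,i+1\}$ for $i=1,\dots,2\ell$, with indices read modulo $2\ell$; thus $\rho(e_{i})=\eb_{i}+\eb_{i+1}$. Since $C$ is connected and bipartite, the dimension formula gives $\dim\Pc_{C}=2\ell-2$, and hence $\codeg\Pc_{C}=(2\ell-1)-\deg\Pc_{C}$. Therefore it is enough to show $\codeg\Pc_{C}\le\ell$, i.e.\ that $\ell\Pc_{C}$ has a lattice point in its relative interior.

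The main point is that $(1,1,\dots,1)\in\ZZ^{2\ell}$ is such a point. Indeed, each vertex $j$ lies on exactly the two edges $e_{j-1}$ and $e_{j}$, so
\[
(1,1,\dots,1)=\sum_{i=1}^{2\ell}\tfrac12\,\rho(e_{i})=\ell\cdot\Bigl(\sum_{i=1}^{2\ell}\tfrac{1}{2\ell}\,\rho(e_{i})\Bigr),
\]
and $\sum_{i=1}^{2\ell}\tfrac{1}{2\ell}\rho(e_{i})$ is a convex combination of the vertices $\rho(e_{1}),\dots,\rho(e_{2\ell})$ of $\Pc_{C}$ with all coefficients strictly positive, hence lies in $\int(\Pc_{C})$; scaling by $\ell$ gives $(1,\dots,1)\in\ell\cdot\int(\Pc_{C})=\int(\ell\Pc_{C})$. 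Consequently $\codeg\Pc_{C}\le\ell$, so $\deg\Pc_{C}\ge(2\ell-1)-\ell=\ell-1\ge q$, which is what we wanted.

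I do not expect a genuine obstacle here; the only steps needing a line of care are the elementary convex-geometry facts that a strictly positive convex combination of the vertices of a polytope lies in its relative interior and that $\int(\ell\Pc_{C})=\ell\cdot\int(\Pc_{C})$. (If one preferred a purely algebraic route: $\codim K[C]=c(C)=1$, so $K[C]\cong S/(f)$ with $f=\prod_{i\ \mathrm{odd}}y_{e_{i}}-\prod_{i\ \mathrm{even}}y_{e_{i}}$ of degree $\ell$, and since $C$ has no odd cycle $K[C]$ is normal, whence $\delta(\Pc_{C},\lambda)=1+\lambda+\cdots+\lambda^{\ell-1}$ and again $\deg\Pc_{C}=\ell-1\ge q$; the interior-point argument is shorter, so that is the one I would write out.)
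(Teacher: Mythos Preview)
Your proof is correct and follows essentially the same argument as the paper: both reduce to the even cycle $C$ via Lemma~\ref{deglemma}, compute $\dim\Pc_{C}=2\ell-2$, exhibit the all-ones vector $\sum_{e\in E(C)}\tfrac12\rho(e)$ as a lattice point in $\int(\ell\Pc_{C})$ to get $\codeg\Pc_{C}\le\ell$, and conclude $\deg\Pc_{G}\ge\deg\Pc_{C}\ge\ell-1\ge q$. Your write-up spells out the convex-combination reasoning a bit more explicitly, and the optional hypersurface/$\delta$-polynomial remark is a nice alternative, but the core argument is the same.
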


\begin{proof}
Let $C = (i_1,i_2,\dots,i_{2r})$ be an even cycle in $G$ of length $2r > 2q$.
Then the dimension of $\Pc_C$ is $2r-2$. 
Since
\[
\frac{1}{2} \sum_{e \in E(C)} \rho(e)
=\eb_{i_1} + \cdots + \eb_{i_{2r}} \in \int (r \Pc_C)\cap \ZZ^{2r},
\]
we have $\codeg (\Pc_C) \le r$.
By Lemma \ref{deglemma}, $\deg (\Pc_G) \ge 2r-2+1- r =r-1 \ge q$.
\end{proof}

A vertex $v$ of a connected graph $G$ is called a {\em cut vertex}
if the graph obtained by the removal of $v$ from $G$ is disconnected.
Given a graph $G$, a {\em block} of $G$ is a maximal connected subgraph of $G$ with no cut vertices.

\begin{Lemma}
\label{codimension2lemma}
Let $q  \ge 3$ be an integer.
Let $G$ be a connected tricyclic graph having 
at least one even cycle.
If the length of every even cycle in $G$ is $2q$, then we have 
$\deg (\Pc_G)\ge q$.
\end{Lemma}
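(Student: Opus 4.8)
The plan is to remove pendant trees, reduce by Proposition~\ref{bitri}(b) to a subdivision of one of the fifteen tricyclic cores $H_1,\dots,H_{15}$, and then, in each configuration, either invoke Lemma~\ref{T2lemma} or Lemma~\ref{T3lemma}, or exhibit by hand a lattice point in the relative interior of a small dilate of the edge polytope and conclude as in the proof of Lemma~\ref{degqqq}.

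By Proposition~\ref{bitri}(b), $G$ is obtained by adding trees to a graph $G_0$ that is a subdivision of one of $H_1,\dots,H_{15}$. Since adding trees creates no cycles, $G_0$ is connected with $c(G_0)=3$, $G_0$ has an even cycle, and every even cycle of $G_0$ has length $2q$; moreover $\deg\Pc_{G_0}\le\deg\Pc_{G}$ by Lemma~\ref{deglemma}, so it suffices to prove $\deg\Pc_{G_0}\ge q$. From here the argument is a case analysis over the fifteen cores, with the lengths of the subdivided edges kept as parameters.

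The generic cases are handled as follows. If $G_0$ has two even cycles with at most one common vertex, both have length $2q$ and Lemma~\ref{T2lemma} applies. If $G_0$ has two even cycles $C_1,C_2$ whose intersection is a single path $P$, then $C_1\triangle C_2$ is a cycle of length $4q-2\,|E(P)|$; being even, it has length $2q$, so $|E(P)|=q$, and $C_1\cup C_2$ is a subdivision of the graph made of three internally disjoint paths of length $q$ joining two vertices. This subgraph is bipartite, all of its cycles have length $2q$, and it arises by adding a path to an even cycle, so Lemma~\ref{T3lemma} applies. Going through $H_1,\dots,H_{15}$, the only subdivisions not covered by these two steps are those carrying essentially one even cycle surrounded by odd cycles, and the subdivisions of $K_4$, in which the even $2q$-cycles pairwise meet along two disjoint edges rather than along a path.

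For these remaining cases I would bound $\codeg\Pc_{G_0}$ directly. Choosing positive rationals $\mu_e$ on the edges of $G_0$ with $p:=\sum_{e}\mu_e\rho(e)\in\ZZ^n$ and $r:=\sum_{e}\mu_e\in\ZZ_{>0}$, one gets $p\in\int(r\Pc_{G_0})\cap\ZZ^n$, hence $\codeg\Pc_{G_0}\le r$ and $\deg\Pc_{G_0}=\dim\Pc_{G_0}+1-\codeg\Pc_{G_0}\ge\dim\Pc_{G_0}+1-r$, exactly as in the proof of Lemma~\ref{degqqq}. When the even $2q$-cycles of $G_0$ cover all its edges, $\mu_e=1/2$ on the edges of such a covering family works; otherwise one spreads the weights along each subdivided edge in an alternating pattern that makes every coordinate of $p$ equal to $1$, and a direct count of edges then gives $r\le\dim\Pc_{G_0}+1-q$. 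I expect the main obstacle to be precisely this last bookkeeping: the crude choice $\mu_e=1/2$ overshoots the required bound on $r$ by a bounded amount in several cores, so the weights must be tuned to the subdivision lengths and to the parities forced by the hypothesis that every even cycle has length $2q$, and the integrality of $p$ has to be checked in each configuration.
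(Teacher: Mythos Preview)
Your outline is exactly the paper's strategy: pass to the tricyclic core $G_0$, dispose of configurations with two ``well-separated'' even $2q$-cycles via Lemmas~\ref{T2lemma} and~\ref{T3lemma}, and in the residual cases exhibit an interior lattice point of $r\Pc_{G_0}$ for small $r$. What you defer as ``bookkeeping'' is, however, the entire substance of the paper's argument: the residual cases break into many sub-cases governed by the parities of the subdivided-edge lengths, and the interior points are assembled from weights $\tfrac12$, $\tfrac13$, $\tfrac23$, $\tfrac16$, $\tfrac56$ laid in alternating patterns along the paths, with a separate check in each sub-case that the resulting vector is integral and that $r\le\dim\Pc_{G_0}+1-q$.

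Two places where your sketch slips. First, the residual configurations are not only ``one even cycle amid odd ones'' and $K_4$-subdivisions: when the three elementary regions of a subdivision of $H_{12}$, $H_{13}$ or $H_{14}$ are all odd, $G_0$ still carries two even $2q$-cycles whose intersection is a pair of disjoint paths (or, for $H_{14}$, just the two branch vertices) and whose union is all of $G_0$, so neither Lemma~\ref{T2lemma} nor Lemma~\ref{T3lemma} applies. The paper treats $H_{12}$ in detail (three parity sub-cases for the bridging paths of lengths $a,a'$) and recovers $H_{13}$, $H_{14}$ as the degenerations $a'=0$ and $a=a'=0$. Second, the flat choice $\mu_e=\tfrac12$ does not yield an integral point whenever $G_0$ has a branch vertex of odd degree; this already fails for subdivisions of $H_{12}$, $H_{13}$ and $H_{15}$, whose branch vertices have degree~$3$. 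So the ``tuning'' you anticipate is not a perturbation of the $\tfrac12$ scheme but a genuinely different assignment, and carrying it out is the proof.
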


\begin{proof}
By Proposition~\ref{bitri} (b), there exists a subgraph $G_0$ of $G$
that is a subdivision of a graph $H$ in Fig.~2.
Let $C$ and $C'$ be even cycles in $G_0$.
By Lemma~\ref{T2lemma}, 
we may assume that $C$ and $C$ have at least two common vertices.
If $C \cup C'$ is a bicyclic graph, then $C \cup C'$
is a graph appearing in Lemma \ref{T3lemma} and hence $\deg (\Pc_G) \ge q$.
Hence we may assume that
\begin{itemize}
\item[($*$)]
If $C$ and $C'$ are even cycles in $G_0$, then $C$ and $C'$ have at least two common vertices,
and $C \cup C'$ is not bicyclic.
\end{itemize}


\noindent
{\bf Case 1.} ($H=H_1,\ldots,H_7$.)
Then $G$ has exactly three cycles.
By condition ($*$), 
we may assume that $G$ has one even cycle $C_1= (i_1, i_2,\ldots, i_{2q})$ of length $2q$ and
two odd cycles $C_2=(j_1, j_2,\ldots,j_{2r+1})$ and $C_3= (k_1, k_2,\ldots,k_{2s+1})$.
Let $H' =C_1 \cup C_2 \cup C_3 $.
Then the dimension of $\Pc_{H'}$ is 
$2q+2r+2s-t$ for some $t \in \{0,1\}$.
Since all of
\begin{eqnarray*}
\alpha&=&\frac{1}{2} \sum_{e \in E(C_1)} \rho(e) = \eb_{i_1} + \cdots + \eb_{i_{2q}}\\
\beta&=&\frac{1}{2} \sum_{e \in E(C_2)} \rho(e) = \eb_{j_1} + \cdots + \eb_{j_{2r+1}}\\
\gamma&=&\frac{1}{2} \sum_{e \in E(C_3)} \rho(e) = \eb_{k_1} + \cdots + \eb_{k_{2s+1}}
\end{eqnarray*}
are integer vectors, we have
\[
\alpha + \beta + \gamma \in \int ((q+r+s+1) \Pc_{H'})\cap \ZZ^{|V(H')|}.
\]
Hence $\codeg (\Pc_{H'}) \le q+r+s+1$.
By Lemma \ref{deglemma}, 
$$\deg (\Pc_G) \ge 2q+2r+2s-t +1- q-r-s-1 = q+r+s-t\ge q.$$


\noindent
{\bf Case 2.} ($H=H_8, H_9, H_{10}, H_{11}$.)
Then $G_0$ has exactly two blocks $B_1$ and $B_2$
where $B_1$ is a cycle and $B_2$ is obtained by adding a path 
to a cycle.
Note that $B_2$ contains an even cycle $C_1=(e_1,\ldots,e_{2q})$.
By Lemmas \ref{T2lemma} and \ref{T3lemma},
we may assume that $B_1$ is an odd cycle and $B_2$ is not bipartite.
Let $C_2$ be one of two odd cycles in $B_2$.
Let $G'$ be a subgraph $B_1 \cup B_2$ of $G$.
Then the dimension of $\Pc_{G'}$ is
$2q + |E(B_1 \cup C_2)| - |E(C_1 \cap C_2)|-2-t$ for some $t \in \{0,1\}$.
Since all of
\begin{eqnarray*}
\alpha&=&\frac{1}{2} \sum_{e \in E(B_1 \cup C_2)} \rho(e)\\
\beta&=&\frac{1}{3} \sum_{i=1}^{q} \rho(e_{2i-1})
+
\frac{2}{3} \sum_{i=1}^{q} \rho(e_{2i})\\
\gamma&=&
\sum_{\substack{1 \le i \le q \\ e_{2i} \in E(C_2)}} \rho(e_{2i})
\end{eqnarray*}
are integer vectors, and $1/2 + 2/3 -1 = 1/6 >0$,
\[
\alpha + \beta -\gamma \in \int (r \Pc_{G'})\cap \ZZ^{|V(G')|},
\]
where $r=q+|E(B_1 \cup C_2)|/2-|E(C_1 \cap C_2)| $.
Hence we have $\codeg (\Pc_{G'}) \le r$.
By Lemma~\ref{deglemma}, 
\begin{eqnarray*}
\deg (\Pc_G) &\ge& 2q + |E(B_1 \cup C_2)| - |E(C_1 \cap C_2)|-2-t+1
- r\\
 &=&
q + |E(B_1 \cup C_2)|/2-1-t\\
& \ge& q.
\end{eqnarray*}


\noindent
{\bf Case 3.} ($H=H_{12}, H_{13}, H_{14}, H_{15}$.)

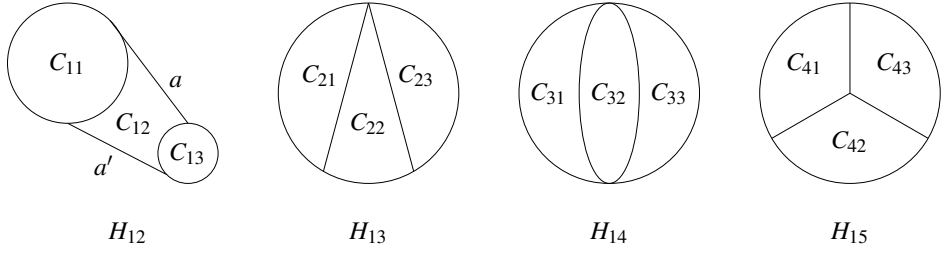
\begin{figure}
\scalebox{0.8}{
\begin{tikzpicture}
\coordinate (c100) at (-3.2,0.2) node at (c100) {$a$};
\coordinate (c101) at (-4.4,-1.2) node at (c101) {$a'$};

\draw(-5,0.5) circle(1);
\draw (-3,-1) circle(0.5);
\draw (-5,-0.5)--({-3-sqrt(2)/4},{-1-sqrt(2)/4});
\draw ({-5+sqrt(2)/2},{0.5+sqrt(2)/2})--(-3,-0.5);
\coordinate (c1) at (-5,0.5) node at (c1) {$C_{11}$};
\coordinate (c2) at (-3.9,-0.5) node at (c2) {$C_{12}$};
\coordinate (c3) at (-3,-1) node at (c3) {$C_{13}$};
\coordinate (v1) at (-4,-2) node at (v1) [below] {$H_{12}$};

\draw(0,0) circle(1.5);
\draw (0,1.5)--(-3/4,{-3*sqrt(3)/4});
\draw (0,1.5)--(3/4,{-3*sqrt(3)/4});
\coordinate (c10) at (-0.8,0.3) node at (c10) {$C_{21}$};
\coordinate (c11) at (0,-0.5) node at (c11) {$C_{22}$};
\coordinate (c12) at (0.8,0.3) node at (c12) {$C_{23}$};
\coordinate (v2) at (0,-2) node at (v2) [below] {$H_{13}$};

\draw (4,0) circle [x radius=0.5,y radius=1.5];
\draw (4,0) circle (1.5);
\coordinate (c4) at (3,0) node at (c4) {$C_{31}$};
\coordinate (c5) at (4,0) node at (c5) {$C_{32}$};
\coordinate (c6) at (5,0) node at (c6) {$C_{33}$};
\coordinate (v3) at  (4,-2) node at (v3) [below] {$H_{14}$};

\draw (8,0) circle (1.5);
\draw (8,1.5)--(8,0);
\draw ({8-1.5*sqrt(3)/2},-3/4)--(8,0);
\draw ({8+1.5*sqrt(3)/2},-3/4)--(8,0);
\coordinate (c7) at (7.25,0.5) node at (c7) {$C_{41}$};
\coordinate (c8) at (8,-0.8) node at (c8) {$C_{42}$};
\coordinate (c9) at (8.75,0.5) node at (c9) {$C_{43}$};
\coordinate (v4) at (8,-2) node at (v4) [below] {$H_{15}$};
 \end{tikzpicture}
}
\caption{$H_{12}, \dots, H_{15}$}
\label{four cases}
\end{figure}

Note that, if $C$ and $C'$ are odd cycles in $G_0$ having at least one common edge
such that 
$C \cup C'$ is a bicyclic graph, then $C \cup C'$ contains an even cycle.
Hence by condition ($*$), it follows that any $C_{ij}$ in Fig.~\ref{four cases} is an odd cycle
except for $C_{12}$, $C_{22}$, and $C_{32}$.
If $C_{i2}$ is an even cycle, then we replace $C_{i2}$ with the odd cycle
whose edge set is $E(C_{i1}) \cup E(C_{i2}) \setminus (E(C_{i1}) \cap E(C_{i2}))$.
Thus we may assume that any $C_{ij}$ in Fig.~\ref{four cases} is an odd cycle.
%

\bigskip

\noindent
{\bf Case 3.1.} ($H=H_{12}$, $a+a'$ is odd.)

We may assume that $a$ is odd and $a'$ is even.
Let $a=2s+1$ and $a'=2t$.
Given positive integers $k$, $\ell$, $k'$, and $\ell'$ with  
\[
2k+1+2\ell+a+a'=2k'+2\ell'+1+a+a'= 2q,
\]
 let $G_0$ be a graph on the vertex set 
$[m'-2]$ with the edge set 
$E(G_0)=\{e_1,\ldots, e_{m'} \}$, where $m'=2q+2k'+2\ell'+1$ and 
	\[
	e_i=\begin{cases}
		\{i,i+1\} & 1 \leq i \leq 2q-1,\\
		\{1,i \} & i=2q,2q+1,\\
		\{i-1,i\} & 2q+2 \leq i \leq 2q+2k'-1,\\
		\{2q+2k'-1,2k+2\} & i=2q+2k',\\
		\{2k+1+a+1,2q+2k'\} & i=2q+2k'+1,\\   
		\{i-2,i-1\} & 2q+2k'+2 \leq i \leq 2q+2k'+2\ell',\\
		\{2q+2k'+2\ell'-1,2q-a'+1\} & i=2q+2k'+2\ell'+1 .
	\end{cases}
	\]
See Fig.~\ref{case3.1}.
\begin{figure}[h]
\begin{center}
\scalebox{0.7}{
 \begin{tikzpicture}
        
 \draw[dotted][line width=0.5pt] (-3,3) arc (90:270:3); 
 \draw[line width=0.5pt] (-3,3) arc (90:160:3);
 \draw[line width=0.5pt] (-3,-3) arc (270:200:3);
  \draw[dotted][line width=0.5pt] (3,-3) arc (270:450:3); 
 \draw[line width=0.5pt] (3,-3) arc (270:340:3);
 \draw[line width=0.5pt] (3,3) arc (90:20:3);
 
\draw[dotted]  (3,3) rectangle (-3,-3);
\draw (-0.5,3)--(-3,3)--(-3,0.5);
 \draw (-3,-0.5)--(-3,-3)--(-0.5,-3);
 \draw (0.5,-3)--(3,-3)--(3,-0.5);
 \draw (3,0.5)--(3,3)--(0.5,3);
 
\coordinate (v1) at (-3,3) node at (v1) [above] {$1$};
\coordinate (v2) at (-3,2) node at (v2) [left] {$2$};
\coordinate (v3) at (-3,-2) node at (v3) [left] {$2k+1$};
\coordinate (v4) at (-3,-3) node at (v4) [below] {$2k+2$};
\coordinate (v5) at (-1.5,-3) node at (v5) [below] {$2k+3$}; 
\coordinate (v6) at (1.5,-3) node at ( v6) [below] {$2k+1+a$};
\coordinate (v7) at (3,-3) node at (v7) [below right] {$2k+1+a+1$};
\coordinate (v8) at (3,-2) node at (v8) [left] {$2q+2k'$};
\coordinate (v9) at (3,2)  node at (v9) [left] {$2q+2k'+2\ell'-1$};
\coordinate (v10) at (3,3) node at (v10) [above] {$2q-a'+1$};
 \coordinate (v11) at (1.5,3) node at (v11) [above left] {$2q-a'+2$};
 \coordinate (v12) at (-1.5,3) node at (v12) [above] {$2q$};
\coordinate (v13) at (-{9/2},{3*sqrt(3)/2}) node at (v13) [above left] {$2q+1$};
\coordinate (v14) at ({-3*sqrt(3)/2-3},3/2) node at (v14) [above left] {$2q+2$}; 
\coordinate (v15) at ({-3*sqrt(3)/2-3},-3/2) node at (v15) [below left] {$2q+2k'-2$};
\coordinate (v16) at ({-9/2},{-3*sqrt(3)/2}) node at (v16) [below left] {$2q+2k'-1$};
\coordinate (v17) at ({9/2},{-3*sqrt(3)/2}) node at (v17) [right] {$2k+1+a+2$};
\coordinate (v18) at ({3*sqrt(3)/2+3},{-3/2}) node at (v18) [right] {$2k+1+a+3$};
\coordinate (v20) at ({9/2},{3*sqrt(3)/2}) node at (v20) [above right] {$2k+1+a+2\ell$};
\coordinate (v19) at ({3+3*sqrt(3)/2},3/2) node at (v19) [above right] {$2k+1+a+2\ell-1$};
\coordinate (v21) at (-3,1) node at (v21) [left] {$3$};
\coordinate (v22) at (-3,-1) node at (v22) [left] {$2k$};
\coordinate (v25) at (3,1) node at (v25) [left] {$2q+2k'+2\ell'-2$};
\coordinate (v26) at (3,-1) node at (v26) [left] {$2q+2k'+1$};
 
\fill
(v1) circle (2pt)
(v2) circle (2pt)
(v3) circle (2pt)
(v4) circle (2pt)
(v5) circle (2pt)
(v6) circle (2pt)
(v7)circle (2pt)
(v8) circle (2pt)
(v9) circle (2pt)
(v10)circle (2pt)
(v11) circle (2pt)
(v12) circle (2pt)
(v13) circle (2pt)
(v14)circle (2pt)
(v15) circle (2pt)
(v16) circle (2pt)
(v17) circle (2pt)
(v18) circle (2pt)
(v19) circle (2pt)
(v20) circle (2pt)
(v21)circle (2pt)
(v22) circle (2pt)
(v25) circle (2pt)
(v26) circle (2pt)
;

 \end{tikzpicture}
 }
\scalebox{0.7}{
 \begin{tikzpicture}
        
 \draw[dotted][line width=0.5pt] (-3,3) arc (90:270:3); 
 \draw[line width=0.5pt] (-3,3) arc (90:160:3);
 \draw[line width=0.5pt] (-3,-3) arc (270:200:3);
  \draw[dotted][line width=0.5pt] (3,-3) arc (270:450:3); 
 \draw[line width=0.5pt] (3,-3) arc (270:340:3);
 \draw[line width=0.5pt] (3,3) arc (90:20:3);
 
\draw[dotted]  (3,3) rectangle (-3,-3);
\draw (-0.5,3)--(-3,3)--(-3,0.5);
 \draw (-3,-0.5)--(-3,-3)--(-0.5,-3);
 \draw (0.5,-3)--(3,-3)--(3,-0.5);
 \draw (3,0.5)--(3,3)--(0.5,3);

\coordinate (e1) at (-3,2.5) node at (e1) {$e_1$};
\coordinate (e2) at (-3,-2.5) node at (e2) {$e_{2k+1}$};
\coordinate (e3) at (-2.25,-3.2) node at (e3)  {$e_{2k+2}$};
\coordinate (e4) at (2.25,-3.2) node at (e4)  {$e_{2k+1+a}$};
\coordinate (e5) at (3,-2.5) node at (e5)  {$e_{2q+2k'+1}$};
\coordinate (e6) at (3,2.5) node at (e6)  {$e_{2q+2k'+2\ell'+1}$};
\coordinate (e7) at (2.25,3.2) node at (e7)  {$e_{2q-a'+1}$}; 
\coordinate (e8) at (-2.25,3.2) node at (e8)  {$e_{2q}$};
\coordinate (e9) at (-4,3) node at (e9)  {$e_{2q+1}$};
\coordinate (e10) at (-5.1,2.1) node at (e10)  {$e_{2q+2}$};
\coordinate (e11) at (-5.1,-2.1) node at (e11)  {$e_{2q+2k'-1}$};
\coordinate (e12) at (-4,-3.2) node at (e12)  {$e_{2q+2k'}$};
\coordinate (e13) at (4,-3.2) node at (e13)  {$e_{2k+1+a+1}$};
\coordinate (e14) at (5.1,-2.1) node at (e14)  {$e_{2k+1+a+2}$};
\coordinate (e15) at (5.1,2.25) node at (e15) [ right] {$e_{2k+1+a+2\ell-1}$};
\coordinate (e16) at (3.2,3) node at (e16) [right] {$e_{2k+1+a+2\ell}$};
\coordinate (e17) at (-3,1.5) node at (e17)  {$e_2$};
\coordinate (e18) at (-3,-1.5) node at (e18) {$e_{2k}$};
\coordinate (e19) at (3,-1.5) node at (e19) {$e_{2q+2k'+2}$};
\coordinate (e20) at (3,1.5) node at (e20)  {$e_{2q+2k'+2\ell'}$};

\fill
(v1) circle (2pt)
(v2) circle (2pt)
(v3) circle (2pt)
(v4) circle (2pt)
(v5) circle (2pt)
(v6) circle (2pt)
(v7)circle (2pt)
(v8) circle (2pt)
(v9) circle (2pt)
(v10)circle (2pt)
(v11) circle (2pt)
(v12) circle (2pt)
(v13) circle (2pt)
(v14)circle (2pt)
(v15) circle (2pt)
(v16) circle (2pt)
(v17) circle (2pt)
(v18) circle (2pt)
(v19) circle (2pt)
(v20) circle (2pt)
(v21)circle (2pt)
(v22) circle (2pt)
(v25) circle (2pt)
(v26) circle (2pt)
;

 \end{tikzpicture}
 }
 \end{center}
\caption{Case 3.1}
\label{case3.1}
\end{figure}
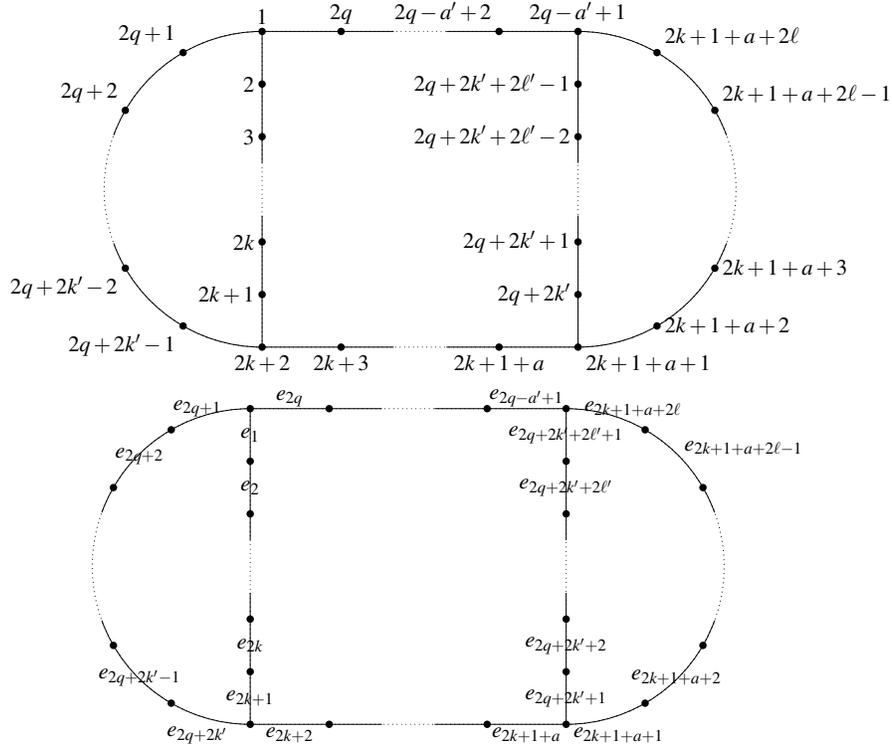
Then one has $\dim({\mathcal{P}_{G_0}})=2q+2k'+2\ell'-1-1=2q+2k+2\ell-2$.
Since
	\begin{eqnarray*}
	&	&\sum_{i=1}^{2k+1} \dfrac{1}{2} \rho(e_{i}) +\sum_{i=1}^{2\ell} \dfrac{1}{2} \rho(e_{2k+1+a+i}) +\sum_{i=1}^{k'} \left(\dfrac{2}{3} \rho(e_{{2q+2i-1}})+ \dfrac{1}{3}\rho(e_{2q+2i})\right)\\
		&+&\sum_{i=1}^{\ell'} \left(\dfrac{1}{3} \rho(e_{{2q+2k'+2i-1}}) + \dfrac{2}{3}\rho(e_{2q+2k'+2i})\right)+\dfrac{1}{3} \rho(e_{{2q+2k'+2\ell'+1}})\\
		&+&\sum_{i=1}^{s} \left(\dfrac{1}{6} \rho(e_{2k+1+{2i-1}}) + \dfrac{5}{6}\rho(e_{2k+1+2i})\right)+\dfrac{1}{6} \rho(e_{2k+1+2s+1})\\	
		&+& \sum_{i=1}^{t} \left(\dfrac{1}{6} \rho(e_{2k+1+a+2\ell+{2i-1}}) + \dfrac{5}{6}\rho(e_{2k+1+a+2\ell+2i})\right)\\
		&= &2 {\eb_1}+{\eb_2}+\cdots+\eb_{m'-2} } \in {\rm int}((k+\ell+k'+\ell'+s+t+1) \mathcal{P}_{G_0}) \cap{\mathbb{Z}^{m'-2},
	\end{eqnarray*}
	we obtain  ${\rm codeg}(\mathcal{P}_{G_0}) \leq q+k+\ell$.
	Hence it follows that
	\[
	{\rm deg}(\mathcal{P}_{G_0})
 \geq 2q+2k+2\ell-2+1-q-k-\ell=q+k+\ell-1 \geq q.
	\]


\noindent
{\bf Case 3.2.} ($H=H_{12}$, $a+a'$ is even.)

Given positive integers $k$, $\ell$, $k'$, and $\ell'$ with  
\[
2k+1+2\ell+1+a+a'=2k'+2\ell'+a+a'= 2q,
\]
let $G_0$ be a graph on the vertex set $[m'-2]$ 
with the edge set $E(G_0)=\{e_1,\ldots, e_{m'} \}$, where $m'=2q+2k'+2\ell'$ and 
	\[
	e_i=\begin{cases}
		\{i,i+1\} & 1 \leq i \leq 2q-1,\\
		\{1,i \} & i=2q,2q+1\\
		\{i-1,i\} & 2q+2 \leq i \leq 2q+2k'-1,\\
		\{2q+2k'-1,2k+2\} & i=2q+2k',\\
		\{2k+1+a+1,2q+2k'\} & i=2q+2k'+1,\\   
		\{i-2,i-1\} & 2q+2k'+2 \leq i \leq 2q+2k'+2\ell'-1,\\
		\{2q+2k'+2\ell'-2,2q-a'+1\} & i=2q+2k'+2\ell' .
	\end{cases}
	\]
See Fig.~\ref{case3.2}.
\begin{figure}[h]
\begin{center}
\scalebox{0.7}{
 \begin{tikzpicture}
        
 \draw[dotted][line width=0.5pt] (-3,3) arc (90:270:3); 
 \draw[line width=0.5pt] (-3,3) arc (90:160:3);
 \draw[line width=0.5pt] (-3,-3) arc (270:200:3);
  \draw[dotted][line width=0.5pt] (3,-3) arc (270:450:3); 
 \draw[line width=0.5pt] (3,-3) arc (270:340:3);
 \draw[line width=0.5pt] (3,3) arc (90:20:3);
 
\draw[dotted]  (3,3) rectangle (-3,-3);
\draw (-0.5,3)--(-3,3)--(-3,0.5);
 \draw (-3,-0.5)--(-3,-3)--(-0.5,-3);
 \draw (0.5,-3)--(3,-3)--(3,-0.5);
 \draw (3,0.5)--(3,3)--(0.5,3);
 
\coordinate (v1) at (-3,3) node at (v1) [above] {$1$};
\coordinate (v2) at (-3,2) node at (v2) [left] {$2$};
\coordinate (v3) at (-3,-2) node at (v3) [left] {$2k+1$};
\coordinate (v4) at (-3,-3) node at (v4) [below] {$2k+2$};
\coordinate (v5) at (-1.5,-3) node at (v5) [below] {$2k+3$}; 
\coordinate (v6) at (1.5,-3) node at ( v6) [below] {$2k+1+a$};
\coordinate (v7) at (3,-3) node at (v7) [below right] {$2k+1+a+1$};
\coordinate (v8) at (3,-2) node at (v8) [left] {$2q+2k'$};
\coordinate (v9) at (3,2)  node at (v9) [left] {$2q+2k'+2\ell'-2$};
\coordinate (v10) at (3,3) node at (v10) [above] {$2q-a'+1$};
 \coordinate (v11) at (1.5,3) node at (v11) [above left] {$2q-a'+2$};
 \coordinate (v12) at (-1.5,3) node at (v12) [above] {$2q$};
\coordinate (v13) at (-{9/2},{3*sqrt(3)/2}) node at (v13) [above left] {$2q+1$};
\coordinate (v14) at ({-3*sqrt(3)/2-3},3/2) node at (v14) [above left] {$2q+2$}; 
\coordinate (v15) at ({-3*sqrt(3)/2-3},-3/2) node at (v15) [below left] {$2q+2k'-2$};
\coordinate (v16) at ({-9/2},{-3*sqrt(3)/2}) node at (v16) [below left] {$2q+2k'-1$};
\coordinate (v17) at ({9/2},{-3*sqrt(3)/2}) node at (v17) [right] {$2k+1+a+2$};
\coordinate (v18) at ({3*sqrt(3)/2+3},{-3/2}) node at (v18) [right] {$2k+1+a+3$};
\coordinate (v20) at ({9/2},{3*sqrt(3)/2}) node at (v20) [above right] {$2k+1+a+2\ell+1$};
\coordinate (v19) at ({3+3*sqrt(3)/2},3/2) node at (v19) [above right] {$2k+1+a+2\ell$};
\coordinate (v21) at (-3,1) node at (v21) [left] {$3$};
\coordinate (v22) at (-3,-1) node at (v22) [left] {$2k$};
\coordinate (v25) at (3,1) node at (v25) [left] {$2q+2k'+2\ell'-3$};
\coordinate (v26) at (3,-1) node at (v26) [left] {$2q+2k'+1$};

\fill
(v1) circle (2pt)
(v2) circle (2pt)
(v3) circle (2pt)
(v4) circle (2pt)
(v5) circle (2pt)
(v6) circle (2pt)
(v7)circle (2pt)
(v8) circle (2pt)
(v9) circle (2pt)
(v10)circle (2pt)
(v11) circle (2pt)
(v12) circle (2pt)
(v13) circle (2pt)
(v14)circle (2pt)
(v15) circle (2pt)
(v16) circle (2pt)
(v17) circle (2pt)
(v18) circle (2pt)
(v19) circle (2pt)
(v20) circle (2pt)
(v21)circle (2pt)
(v22) circle (2pt)
(v25) circle (2pt)
(v26) circle (2pt)
;

 \end{tikzpicture}
 }
\scalebox{0.7}{
 \begin{tikzpicture}
        
 \draw[dotted][line width=0.5pt] (-3,3) arc (90:270:3); 
 \draw[line width=0.5pt] (-3,3) arc (90:160:3);
 \draw[line width=0.5pt] (-3,-3) arc (270:200:3);
  \draw[dotted][line width=0.5pt] (3,-3) arc (270:450:3); 
 \draw[line width=0.5pt] (3,-3) arc (270:340:3);
 \draw[line width=0.5pt] (3,3) arc (90:20:3);
 
\draw[dotted]  (3,3) rectangle (-3,-3);
\draw (-0.5,3)--(-3,3)--(-3,0.5);
 \draw (-3,-0.5)--(-3,-3)--(-0.5,-3);
 \draw (0.5,-3)--(3,-3)--(3,-0.5);
 \draw (3,0.5)--(3,3)--(0.5,3);

\coordinate (e1) at (-3,2.5) node at (e1) {$e_1$};
\coordinate (e2) at (-3,-2.5) node at (e2) {$e_{2k+1}$};
\coordinate (e3) at (-2.25,-3.2) node at (e3)  {$e_{2k+2}$};
\coordinate (e4) at (2.25,-3.2) node at (e4)  {$e_{2k+1+a}$};
\coordinate (e5) at (3,-2.5) node at (e5)  {$e_{2q+2k'+1}$};
\coordinate (e6) at (3,2.5) node at (e6)  {$e_{2q+2k'+2\ell'}$};
\coordinate (e7) at (2.25,3.2) node at (e7)  {$e_{2q-a'+1}$}; 
\coordinate (e8) at (-2.25,3.2) node at (e8)  {$e_{2q}$};
\coordinate (e9) at (-4,3) node at (e9)  {$e_{2q+1}$};
\coordinate (e10) at (-5.1,2.1) node at (e10)  {$e_{2q+2}$};
\coordinate (e11) at (-5.1,-2.1) node at (e11)  {$e_{2q+2k'-1}$};
\coordinate (e12) at (-4,-3.2) node at (e12)  {$e_{2q+2k'}$};
\coordinate (e13) at (4,-3.2) node at (e13)  {$e_{2k+1+a+1}$};
\coordinate (e14) at (5.1,-2.1) node at (e14)  {$e_{2k+1+a+2}$};
\coordinate (e15) at (5.1,2.25) node at (e15) [ right] {$e_{2k+1+a+2\ell}$};
\coordinate (e16) at (3.2,3) node at (e16) [right] {$e_{2k+1+a+2\ell+1}$};
\coordinate (e17) at (-3,1.5) node at (e17)  {$e_2$};
\coordinate (e18) at (-3,-1.5) node at (e18) {$e_{2k}$};
\coordinate (e19) at (3,-1.5) node at (e19) {$e_{2q+2k'+2}$};
\coordinate (e20) at (3,1.5) node at (e20)  {$e_{2q+2k'+2\ell'-1}$};

\fill
(v1) circle (2pt)
(v2) circle (2pt)
(v3) circle (2pt)
(v4) circle (2pt)
(v5) circle (2pt)
(v6) circle (2pt)
(v7)circle (2pt)
(v8) circle (2pt)
(v9) circle (2pt)
(v10)circle (2pt)
(v11) circle (2pt)
(v12) circle (2pt)
(v13) circle (2pt)
(v14)circle (2pt)
(v15) circle (2pt)
(v16) circle (2pt)
(v17) circle (2pt)
(v18) circle (2pt)
(v19) circle (2pt)
(v20) circle (2pt)
(v21)circle (2pt)
(v22) circle (2pt)
(v25) circle (2pt)
(v26) circle (2pt)
;

 \end{tikzpicture}
 }
 \end{center}
\caption{Case 3.2}
\label{case3.2}
\end{figure}
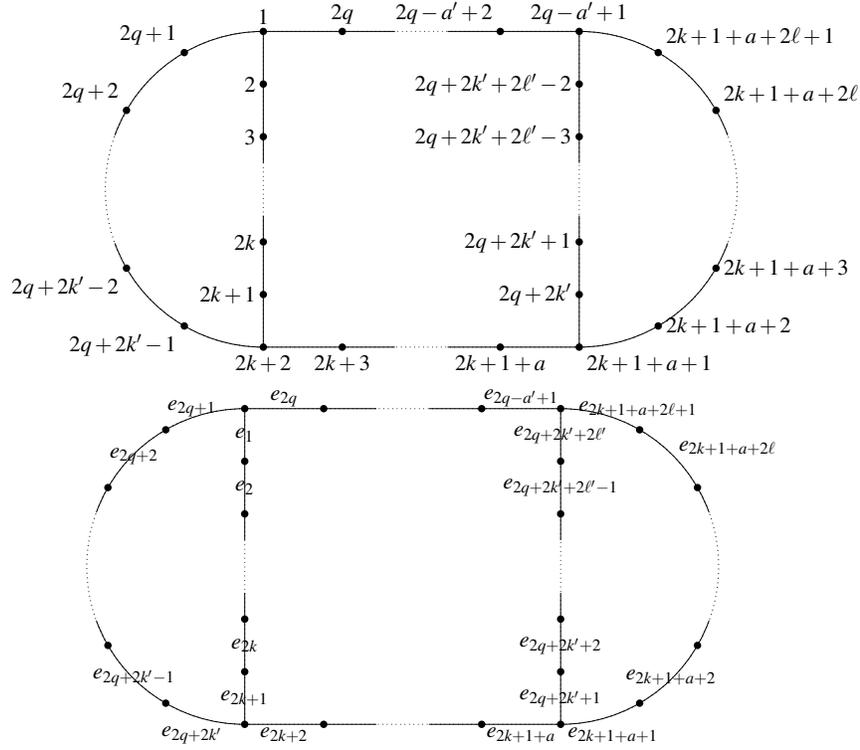
Then one has $\dim({\mathcal{P}_{G_0}})=2q+2k'+2\ell'-2-1=2q+2k+2\ell-1$.

\bigskip

\noindent
{\bf Case 3.2.1.} ($H=H_{12}$, $a=2s+1$, and $a'=2t+1$.)

Since
	\begin{eqnarray*}
	&	&\sum_{i=1}^{2k'} \dfrac{1}{2} \rho(e_{{2q}+{i}}) +\sum_{i=1}^{2\ell'}  \dfrac{1}{2} \rho(e_{2q+2k'+i}) +\sum_{i=1}^{k} \left(\dfrac{1}{3} \rho(e_{{2i-1}}) + \dfrac{2}{3}\rho(e_{2i})\right)+\dfrac{1}{3} \rho(e_{{2k+1}})\\
		&+&\sum_{i=1}^{\ell} \left(\dfrac{1}{3} \rho(e_{{2k+1+a+2i-1}}) + \dfrac{2}{3}\rho(e_{2k+1+a+2i})\right)+\dfrac{1}{3} \rho(e_{{2k+1+a+2\ell+1}})\\
		&+&\sum_{i=1}^{s} \left(\dfrac{1}{6} \rho(e_{2k+1+{2i-1}}) + \dfrac{5}{6}\rho(e_{2k+1+2i})\right)+\dfrac{1}{6} \rho(e_{2k+1+2s+1})\\	
		&+&\sum_{i=1}^{t} \left(\dfrac{1}{6} \rho(e_{2k+1+a+2\ell+1+{2i-1}}) + \dfrac{5}{6}\rho(e_{2k+1+a+2\ell+1+2i})\right)+\dfrac{1}{6} \rho(e_{2q})\\
		&=& {\eb_1}+{\eb_2}+\cdots+\eb_{m'-2} \in {\rm int}((k+\ell+k'+\ell'+s+t+1) \mathcal{P}_{G_0})\cap{\mathbb{Z}^{m'-2}},
	\end{eqnarray*}
	we obtain  ${\rm codeg}(\mathcal{P}_{G_0}) \leq q+k+\ell$.
	Hence it follows that
	\[
	{\rm deg}(\mathcal{P}_{G_0})
\geq 2q+2k+2\ell-1+1-q-k-\ell=q+k+\ell \geq q.
	\]
	
\noindent
{\bf Case 3.2.2.} ($H=H_{12}$, $a=2s$, and $a'=2t$.)

Since
	\begin{eqnarray*}
	&	&\sum_{i=1}^{2k'}  \dfrac{1}{2} \rho(e_{{2q}+{i}}) +\sum_{i=1}^{2\ell'}  \dfrac{1}{2} \rho(e_{2q+2k'+i}) +\sum_{i=1}^{k} \left(\dfrac{1}{3} \rho(e_{{2i-1}})+ \dfrac{2}{3}\rho(e_{2i})\right)\\
		&+& \dfrac{1}{3} \rho(e_{{2k+1}})+\sum_{i=1}^{\ell} \left(\dfrac{2}{3} \rho(e_{{2k+1+a+2i-1}}) + \dfrac{1}{3}\rho(e_{2k+1+a+2i})\right)+\dfrac{2}{3} \rho(e_{{2k+1+a+2\ell+1}})\\
		&+&\sum_{i=1}^{s} \left(\dfrac{1}{6} \rho(e_{2k+1+{2i-1}}) + \dfrac{5}{6}\rho(e_{2k+1+2i})\right)\\	
		&+&\sum_{i=1}^{t} \left(\dfrac{5}{6} \rho(e_{2k+1+a+2\ell+1+{2i-1}}) + \dfrac{1}{6}\rho(e_{2k+1+a+2\ell+1+2i})\right)\\
	&=&	\eb_{2k+1+a+1} + \eb_{2q-a'+1}+{\eb_1}+{\eb_2}+\cdots+\eb_{m'-2} \\
& \in& {\rm int}((k+\ell+k'+\ell'+s+t+1) \mathcal{P}_{G_0}) \cap{\mathbb{Z}^{m'-2}},
	\end{eqnarray*}
	we obtain  ${\rm codeg}(\mathcal{P}_{G_0}) \leq q+k+\ell+1$.
	Hence it follows that
	\[
	{\rm deg}(\mathcal{P}_{G_0})
\geq 2q+2k+2\ell-1+1-q-k-\ell-1=q+k+\ell -1\geq q.
	\]


\noindent
{\bf Case 3.3.} ($H=H_{13}, H_{14}$.)

Since the arguments in Case 3.1 and Case 3.2.2 remain valid even if $a'=0$,
we have $ \deg(\mathcal{P}_G)\geq q $ when $H=H_{13}$.
Furthermore, 
since the argument in Case 3.2.2 remains valid even if $a=a'=0$, we have $ \deg(\mathcal{P}_G)\geq q $ when $H=H_{14}$.

\bigskip

\noindent
{\bf Case 3.4.} ($H=H_{15}$.)

Let $a$, $b$, $c$, $d$, $e$, and $f$ be the lengths of the paths of
 the graph $G_0$ in Fig.~\ref{case3.4_1}.
Since the length of every even cycle in $G_0$ is $2q$,
we have $a+b+e+f=b+c+d+e=c+a+d+f=2q$.
This gives $a+f=b+e=c+d=q$.

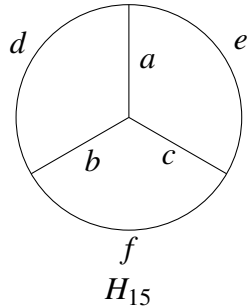
\begin{figure}[h]
\begin{center}
\begin{tikzpicture}{0.9}
\draw (0,0) circle (1.5);
\draw (90:1.5)--(0,0);
\draw (210:1.5)--(0,0);
\draw (330:1.5)--(0,0);
\coordinate (c7) at (150:2) node at (c7) [right]{$d$};
\coordinate (c8) at (30:2) node at (c8)[left] {$e$};
\coordinate (c9) at (270:2.1) node at (c9) [above] {$f$};
\coordinate (c10) at (90:0.75) node at (c10) [right] {$a$};
\coordinate (c11) at (230:0.75) node at (c11)  {$b$};
\coordinate (c12) at (315:0.75) node at (c12)  {$c$};
\coordinate (v4) at (0,-2) node at (v4) [below] {$H_{15}$};
\end{tikzpicture}
\end{center}
\caption{Case 3.4 (the length of each path)}
\label{case3.4_1}
\end{figure}

 Given positive integers $a$, $b$, and $c$ with $a,b,c < q$, 
let $G_0$ be a graph on $[3q-2]$ with the edge set
 $E(G_0)=\{e_1,\ldots, e_{3q} \}$, where 
	\[
	e_i=\begin{cases}
		\{0,v_{{s}_1}\} & i=1,\\
		\{v_{{s}_{i-1}}, v_{{s}_{i}}\} & 2 \leq i \leq q-1 \mbox{ and } i \neq a+1,\\
		\{v_{{t}_{b}},v_{s_{a+1}}\} &i=a+1,\\
		\{v_{s_{q-1}},v_{{k_{c}}}\} & i=q,\\
		\{0,v_{{t}_1}\} & i=q+1,\\
		\{v_{{t}_{i-1}}, v_{{t}_{i}}\} & q+2 \leq i  \leq 2q-1  \mbox{ and } i \neq q+b+1\\
		\{v_{{k}_{c}},v_{t_{b+1}}\} &i=q+b+1,\\
		\{v_{t_{q-1}},v_{{s_{a}}}\} & i=2q,\\
		\{0,v_{{u}_1}\} & i=2q+1,\\
		\{v_{{u}_{i-1}}, v_{{u}_{i}}\} & 2q+2 \leq i \leq 3q-1
 \mbox{ and } i \neq 2q+c+1\\
		\{v_{{s}_{a}},v_{u_{c+1}}\} &i=2q+c+1,\\
		\{v_{u_{q-1}},v_{{t_{b}}}\} & i=3q.\\
	\end{cases}
	\]
See Fig.~\ref{case3.4}.
\begin{figure}[h]
 \begin{center}
\scalebox{0.65}{
 \begin{tikzpicture}
 \draw[dotted] (0,0) circle (5);
 \draw  (45:5) arc (45:135:5);
 \draw  (165:5) arc (165:255:5);
 \draw  (285:5) arc (285:375:5);
 \draw [dotted](90:5)--(0,0);
 \draw [dotted](210:5)--(0,0);
 \draw [dotted](330:5)--(0,0);
 \draw (90:5)--(90:3);
 \draw (90:2)--(90:0);
 \draw (210:5)--(210:3);
 \draw (210:2)--(210:0);
 \draw (330:5)--(330:3);
 \draw (330:2)--(330:0);
 
 \coordinate (v1) at (0,0) node at (v1) [above right] {$0$};
 
 \coordinate (v2) at (90:1.5) node at (v2) [above right] {$v_{{s}_{1}}$};  
\coordinate (v3) at (90:3.5) node at (v3) [above right] {$v_{{s}_{a-1}}$};
 \coordinate (v4) at (90:5) node at (v4) [above right] {$v_{{s}_{a}}$};
 
\coordinate (v5) at (210:1.5) node at (v5) [above ] {$v_{{t}_{1}}$};  
\coordinate (v6) at (210:3.5) node at (v6) [above ] {$v_{{t}_{b-1}}$};
\coordinate (v7) at (210:5) node at (v7) [below left] {$v_{{t}_{b}}$};
 
\coordinate (v8) at (330:1.5) node at (v8) [above ] {$v_{{u}_{1}}$};  
\coordinate (v9) at (330:3.5) node at (v9) [above ] {$v_{{u}_{c-1}}$};
\coordinate (v10) at (330:5) node at (v10) [below right] {$v_{{k}_{c}}$};

\coordinate (v11) at (230:5) node at (v11) [below left] {$v_{{s}_{a+1}}$};
\coordinate (v12) at (250:5) node at (v12) [below left] {$v_{{s}_{a+2}}$};
\coordinate (v13) at (290:5) node at (v13) [below right] {$v_{{s}_{q-2}}$};
\coordinate (v14) at (310:5) node at (v14) [below right] {$v_{{s}_{q-1}}$};

\coordinate (v15) at (350:5) node at (v15) [below right] {$v_{{t}_{b+1}}$};
\coordinate (v16) at (370:5) node at (v16) [below right] {$v_{{t}_{b+2}}$};
\coordinate (v17) at (410:5) node at (v17) [above right] {$v_{{t}_{q-2}}$};
\coordinate (v18) at (430:5) node at (v18) [above right] {$v_{{t}_{q-1}}$};

\coordinate (v19) at (110:5) node at (v19) [above left] {$v_{{u}_{c+1}}$};
\coordinate (v20) at (130:5) node at (v20) [above left] {$v_{{u}_{c+2}}$};
\coordinate (v21) at (170:5) node at (v21) [left] {$v_{{u}_{q-2}}$};
\coordinate (v22) at (190:5) node at (v22) [left] {$v_{{u}_{q-1}}$};

\coordinate (e1) at (90:0.75) node at (e1)  {$e_1$};
\coordinate (e2) at (90:4.25) node at (e2)  {$e_a$};

\coordinate (e3) at (215:0.75) node at (e3)  {$e_{q+1}$};
\coordinate (e4) at (210:4.25) node at (e4)  {$e_{q+b}$};

\coordinate (e5) at (325:0.75) node at (e5)  {$e_{2q+1}$};
\coordinate (e6) at (330:4.25) node at (e6)  {$e_{2q+c}$};

\coordinate (e7) at (220:5) node at (e7)  {$e_{a+1}$};
\coordinate (e8) at (240:5) node at (e8)  {$e_{a+2}$};
\coordinate (e9) at (300:5) node at (e9)  {$e_{q-1}$};
\coordinate (e10) at (320:5) node at (e10) {$e_{q}$};

\coordinate (e11) at (340:5) node at (e11)  {$e_{q+b+1}$};
\coordinate (e12) at (360:5) node at (e12) {$e_{q+b+2}$};
\coordinate (e13) at (420:5) node at (e13) {$e_{2q-1}$};
\coordinate (e14) at (440:5) node at (e14) {$e_{2q}$};

\coordinate (e15) at (100:5) node at (e15) {$e_{2q+c+1}$};
\coordinate (e16) at (120:5) node at (e16) {$e_{2q+c+2}$};
\coordinate (e17) at (180:5) node at (e17) {$e_{3q-1}$};
\coordinate (e18) at (200:5) node at (e18) {$e_{3q}$};

 \fill
(v1) circle (2pt)
(v2) circle (2pt)
(v3) circle (2pt)
(v4) circle (2pt)
(v5) circle (2pt)
(v6) circle (2pt)
(v7)circle (2pt)
(v8) circle (2pt)
(v9) circle (2pt)
(v10)circle (2pt)
(v11) circle (2pt)
(v12) circle (2pt)
(v13) circle (2pt)
(v14)circle (2pt)
(v15) circle (2pt)
(v16) circle (2pt)
(v17) circle (2pt)
(v18) circle (2pt)
(v19) circle (2pt)
(v20) circle (2pt)
(v21) circle (2pt)
(v22) circle (2pt)
;
  \end{tikzpicture}
}
  \end{center}
\caption{Case 3.4}
\label{case3.4}
\end{figure}
 Then one has $\dim({\mathcal{P}_{G_0}})=3q-2-1=3q-3$.

\bigskip

\noindent
{\bf Case 3.4.1.} (all of $a$, $b$, and $c$ are odd.)

Since each $C_{4i}$ is an odd cycle, all of $q-a$, $q-b$, and $q-c$ are odd.
Let 
\[
(a,q-a,b,q-b,c,q-c)=(2a'+1,2\tilde{a}+1,2b'+1,2\tilde{b}+1,2c'+1,2\tilde{c}+1).
\]
Since
	\begin{eqnarray*}
	&	&\sum_{i=1}^{2a'+1} \dfrac{1}{2} \rho(e_{i}) +\sum_{i=1}^{2\tilde{a}+1} \dfrac{1}{2} \rho(e_{2a'+1+i}) +\sum_{i=1}^{b'} \left(\dfrac{1}{3} \rho(e_{{q+2i-1}}) + \dfrac{2}{3}\rho(e_{q+2i})\right)\\
&+&\dfrac{1}{3} \rho(e_{{q+2b'+1}})+\sum_{i=1}^{\tilde{b}} \left(\dfrac{1}{3} \rho(e_{{q+2b'+1+2i-1}}) + \dfrac{2}{3}\rho(e_{q+2b'+1+2i})\right)+\dfrac{1}{3} \rho(e_{{2q}})\\
		&+&\sum_{i=1}^{c'} \left(\dfrac{1}{6} \rho(e_{2q+{2i-1}}) + \dfrac{5}{6}\rho(e_{2q+2i})\right)+\dfrac{1}{6} \rho(e_{2q+2c'+1})\\	
		&+&\sum_{i=1}^{\tilde{c}} \left(\dfrac{1}{6} \rho(e_{2q+2c'+1+{2i-1}}) + \dfrac{5}{6}\rho(e_{2q+2c'+1+{2i}})\right)+\dfrac{1}{6}\rho(e_{3q})\\
&\in& {\rm int}((a'+\tilde{a}+b'+\tilde{b}+c'+\tilde{c}+2) \mathcal{P}_{G_0}) \cap{\mathbb{Z}^{3q-2}},
	\end{eqnarray*}
 we obtain  ${\rm codeg}(\mathcal{P}_{G_0}) \leq(3q-2)/2$.
	Hence it follows that
	\[
	{\rm deg}(\mathcal{P}_{G_0})
\geq 3q-3+1-\dfrac{3q-2}{2}=\dfrac{3q-2}{2} \geq q.
	\]

\noindent
{\bf Case 3.4.2.}  (all of $a$, $b$, and $c$ are even.)

Since each $C_{4i}$ is an odd cycle, all of $q-a$, $q-b$, and $q-c$ are odd.
Let 
\[
(a,q-a,b,q-b,c,q-c)=(2a',2\tilde{a}+1,2b',2\tilde{b}+1,2c',2\tilde{c}+1).
\]
Since
	\begin{eqnarray*}
		&&\sum_{i=1}^{2a'} \dfrac{1}{2} \rho(e_{i}) +\sum_{i=1}^{2\tilde{a}+1}  \dfrac{1}{2} \rho(e_{2a'+1+i})+\sum_{i=1}^{b'} \left(\dfrac{2}{3} \rho(e_{{q+2i-1}}) + \dfrac{1}{3}\rho(e_{q+2i})\right)\\
		&+&\sum_{i=1}^{\tilde{b}} \left(\dfrac{1}{3} \rho(e_{{q+2b'+2i-1}}) + \dfrac{2}{3}\rho(e_{q+2b'+2i})\right)
+\dfrac{1}{3} \rho(e_{{2q}})\\
&+&\sum_{i=1}^{c'} \left(\dfrac{5}{6} \rho(e_{2q+{2i-1}}) + \dfrac{1}{6}\rho(e_{2q+2i})\right)\\	
		&+&\sum_{i=1}^{\tilde{c}} \left(\dfrac{1}{6} \rho(e_{2q+2c'+{2i-1}}) + \dfrac{5}{6}\rho(e_{2q+2c'+{2i}})\right)+\dfrac{1}{6} \rho(e_{3q})\\
&\in& {\rm int}((a'+\tilde{a}+b'+\tilde{b}+c'+\tilde{c}+1) \mathcal{P}_{G_0}) \cap{\mathbb{Z}^{3q-2}},
	\end{eqnarray*}
 we obtain  ${\rm codeg}(\mathcal{P}_{G_0}) \leq (3q-1)/2$.
	Hence it follows that
	\[
	{\rm deg}(\mathcal{P}_{G_0})
\geq 3q-3+1-\dfrac{3q-1}{2}=\dfrac{3q-3}{2} \geq q.
	\]

\noindent
{\bf Case 3.4.3} (at least one of $a$, $b$, and $c$ is odd, and at least one of $a$, $b$, and $c$ is even.)

If $a$ and $b$ are odd, and $c$ is even, then
$q-a$ and $q-b$ are even since each $C_{4i}$ is an odd cycle.
By considering $c$, $q-a$, and $q-b$ instead of $a$, $b$, and $c$,
the case is reduced to Case 3.4.2.
Thus we may assume that 
$a$ is odd, and $b$ and $c$ are even.
Since each $C_{4i}$ is an odd cycle,
$q-a$ is odd and $q-b$ and $q-c$ are even.
Let 
\[(a,q-a,b,q-b,c,q-c)=(2a'+1,2\tilde{a}+1,2b',2\tilde{b},2c',2\tilde{c}).\]
Since
	\begin{eqnarray*}
	&	&\sum_{i=1}^{2a'+1} \dfrac{1}{2} \rho(e_{i})+\sum_{i=1}^{2\tilde{a}+1} \dfrac{1}{2} \rho(e_{2a'+1+i})+\sum_{i=1}^{b'} \left(\dfrac{2}{3} \rho(e_{{q+2i-1}}) + \dfrac{1}{3}\rho(e_{q+2i})\right)\\
		&+&\sum_{i=1}^{\tilde{b}} \left(\dfrac{1}{3} \rho(e_{{q+2b'+2i-1}}) + \dfrac{2}{3}\rho(e_{q+2b'+2i})\right)
+\sum_{i=1}^{c'} \left(\dfrac{5}{6} \rho(e_{2q+{2i-1}}) + \dfrac{1}{6}\rho(e_{2q+2i})\right)\\	
		&+&\sum_{i=1}^{\tilde{c}} \left(\dfrac{5}{6} \rho(e_{2q+2c'+{2i-1}}) + \dfrac{1}{6}\rho(e_{2q+2c'+{2i}})\right)\\
&\in& {\rm int}((a'+\tilde{a}+b'+\tilde{b}+c'+\tilde{c}+1) \mathcal{P}_{G_0}) \cap{\mathbb{Z}^{3q-2}},
	\end{eqnarray*}
 we obtain  ${\rm codeg}(\mathcal{P}_{G_0}) \leq 3q/2$.
	Hence it follows that
	\[
	{\rm deg}(\mathcal{P}_{G_0})
\geq \left\lceil 3q-3+1-\dfrac{3q}{2} \right\rceil = q-2 + \left\lceil \dfrac{q}{2} \right\rceil \geq q.
	\]
\end{proof}

\subsection{Toric ideals of edge rings}

In the present subsection, we study the set of generators of the toric ideal of a graph.
Let $G$ be a finite connected graph.
As explained in Section~\ref{section:ql}, 
the toric ring $K[\Pc_G]$ of $\Pc_G$ is isomorphic to the edge ring $K[G]$ of $G$.
Let $G$ be a graph with $m$ edges and let $I_G \subset K[y_1,\ldots,y_m]$ denote the toric ideal of $\Pc_G$.
The toric ideal $I_G$ has been discussed in many papers.
See \cite[Chapter 5]{binomialideals} and \cite{Vbook} for details.
In particular, the minimal set of generators of $I_G$ has been described 
using graph theoretical terminology.
 A \textit{walk} of $G$ {\em of length $q$} connecting $v_1 \in V(G)$ and $v_{q+1} \in V(G)$ is a finite sequence of the form
\[
\Gamma=(\{v_1,v_2\},\{v_2,v_3\},\ldots,\{v_q,v_{q+1}\})
\]
with each $\{v_k, v_{k+1}\} \in E(G)$.  An \textit{even walk} is a walk of even length and a \textit{closed walk} is a walk such that $v_1=v_{q+1}$.  Given an even closed walk
\[
\Gamma=(e_{i_1},e_{i_2},\ldots,e_{i_{2q}})
\]
of $G$ with each $e_k \in E(G)$,
we write $f_{\Gamma}$ for the binomial
$$f_{\Gamma}=\prod_{k=1}^{q}y_{i_{2k-1}}-\prod_{k=1}^{q}y_{i_{2k}}$$
belonging to $I_G$,
where $\pi(y_i)=\xb^{\rho(e_i)}s$.
The following lemma is due to Villarreal \cite[Proposition 3.1]{Vpaper}.

\begin{Lemma}[{\cite[Lemma~5.9]{binomialideals}}]
\label{ecw2}
The toric ideal $I_G$ of a finite connected simple graph $G$ is generated by
$\{ f_{\Gamma} : \Gamma \mbox{ is an even closed walk of }  G \}$.
\end{Lemma}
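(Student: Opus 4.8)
The plan is to prove the equality $I_{G}=J$, where $J$ denotes the ideal generated by $\{f_{\Gamma}:\Gamma\text{ an even closed walk of }G\}$. The inclusion $J\subseteq I_{G}$ is the routine direction. Given an even closed walk $\Gamma=(e_{i_{1}},\ldots,e_{i_{2q}})$ with $\{v_{k},v_{k+1}\}=e_{i_{k}}$ and $v_{1}=v_{2q+1}$, I would check by a vertex-by-vertex count that, for every vertex $w$, the $w$-th coordinate of both $\sum_{k\text{ odd}}\rho(e_{i_{k}})$ and $\sum_{k\text{ even}}\rho(e_{i_{k}})$ equals the number of indices $k\in\{1,\ldots,2q\}$ with $v_{k}=w$. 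Hence these two integer vectors coincide, so the two monomials of $f_{\Gamma}$ have the same image under $\pi$ and $f_{\Gamma}\in I_{G}$.

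For the reverse inclusion $I_{G}\subseteq J$ I would first reduce to a convenient shape of binomials. Since $I_{G}$ is generated by binomials and $K[G]$ is a domain, cancelling the common monomial $\yb^{\min(\ub,\wb)}$ from an arbitrary binomial $\yb^{\ub}-\yb^{\wb}\in I_{G}$ shows that it suffices to treat binomials $\yb^{\ab^{+}}-\yb^{\ab^{-}}$ with $\ab\in\ZZ^{m}$ in the kernel of the lattice map $\eb_{i}\mapsto(\rho(e_{i}),1)$, i.e.\ with $\sum_{i}a_{i}\rho(e_{i})=0$ and $\ab^{+},\ab^{-}$ of disjoint support. Reading $\sum_{i}a_{i}\rho(e_{i})=0$ off coordinatewise, the edge multiset $R$ consisting of each $e_{i}$ with multiplicity $a_{i}^{+}$ and the edge multiset $B$ consisting of each $e_{i}$ with multiplicity $a_{i}^{-}$ satisfy $\deg_{R}(w)=\deg_{B}(w)$ for every vertex $w$.

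The combinatorial heart of the argument, and the step I expect to be the main obstacle, is a decomposition lemma: any balanced pair $(R,B)$ of edge multisets with $\deg_{R}(w)=\deg_{B}(w)$ for all $w$ can be written as a disjoint union of even closed walks $\Gamma_{1},\ldots,\Gamma_{t}$, each alternating between $R$-edges (in odd positions) and $B$-edges (in even positions), whose $R$-edges partition $R$ and whose $B$-edges partition $B$. I would prove this by induction on $|R|=|B|$: starting from a vertex incident to an $R$-edge, greedily extend an alternating walk, always leaving the current vertex along a not-yet-consumed edge whose colour is opposite to that of the incoming edge. A count of how many edge-slots of each colour at the current vertex have already been used, together with $\deg_{R}=\deg_{B}$, shows that the walk can always be extended until the first time two vertices occupying positions of the same parity coincide; the closed sub-walk between them is an even closed walk of the required alternating type, and deleting its edges from $R$ and $B$ preserves the balance (any closed walk consumes equally many $R$- and $B$-slots at each vertex), so the induction goes through. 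The delicate point is exactly this ``cannot get stuck before closing up'' bookkeeping, in particular handling the case where the walk returns to its initial vertex.

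Finally, given such a decomposition, after a cyclic shift so that $\Gamma_{j}$ begins with an $R$-edge, write $f_{\Gamma_{j}}=\yb^{\ub_{j}}-\yb^{\wb_{j}}$, where $\ub_{1}+\cdots+\ub_{t}=\ab^{+}$ and $\wb_{1}+\cdots+\wb_{t}=\ab^{-}$. The telescoping identity
\[
\yb^{\ab^{+}}-\yb^{\ab^{-}}=\sum_{j=1}^{t}\yb^{\ub_{1}+\cdots+\ub_{j-1}+\wb_{j+1}+\cdots+\wb_{t}}\,f_{\Gamma_{j}}
\]
then exhibits $\yb^{\ab^{+}}-\yb^{\ab^{-}}$ as an element of $J$. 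Combined with the reduction above this yields $I_{G}\subseteq J$, and hence $I_{G}=J$, as desired.
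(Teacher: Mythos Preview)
The paper does not give its own proof of this lemma: it is quoted from \cite[Lemma~5.9]{binomialideals} (and attributed to Villarreal \cite{Vpaper}) and used as a black box. So there is no in-paper argument to compare against.

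Your proposed proof is correct and is essentially the standard one (the same argument that appears in the cited sources). The containment $J\subseteq I_G$ is verified exactly as you describe. For $I_G\subseteq J$, the reduction to binomials $\yb^{\ab^+}-\yb^{\ab^-}$ with $\ab$ in the lattice kernel is standard for toric ideals, and your interpretation of such an $\ab$ as a balanced red/blue edge multiset $(R,B)$ with $\deg_R=\deg_B$ at every vertex is the right combinatorial translation. The decomposition step is the heart of the matter, and your greedy alternating-walk argument goes through: at a vertex $v\neq v_1$ the equalities $\deg_R(v)=\deg_B(v)$ together with the fact that each completed visit to $v$ consumes one edge of each colour guarantee an available edge of the required colour; at $v_1$ the only way to get stuck is after arriving via a $B$-edge, and then $v_1=v_{2k+1}$ already furnishes the same-parity coincidence you need. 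The first-direction computation $\sum_{k\ \mathrm{odd}}\rho(e_{i_k})=\sum_{k\ \mathrm{even}}\rho(e_{i_k})$ also shows that removing the extracted even alternating closed sub-walk preserves $\deg_R=\deg_B$, so the induction is sound. The final telescoping identity is correct as written. One small point worth making explicit: after extracting the closed sub-walk between positions $j$ and $k$, the prefix edges (positions $1$ through $j-1$) are \emph{not} removed from $(R,B)$; they simply return to the pool for the next round of the induction.
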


A necessary condition for $f_\Gamma$ to belong to a minimal set of binomial
generators of $I_G$ is as follows.

\begin{Proposition}[{\cite[Lemmas 5.10 and 5.11]{binomialideals} and \cite{OHT}}]
\label{ecw}
Let $\Gamma$ be an even closed walk of a graph $G$.
If the binomial $f_\Gamma$ belongs to a minimal set of binomial generators of $I_G$,
then $f_\Gamma$ is irreducible and 
$\Gamma$ visits each vertex at most twice
and satisfies one of the following{\rm :}
\begin{itemize}
\item[{\rm (i)}]
$\Gamma$ is an even cycle{\rm ;}
\item[{\rm (ii)}]
$\Gamma$ consists of two odd cycles having
exactly one common vertex{\rm ;}
\item[{\rm (iii)}]
$\Gamma$ consists of two odd cycles $C$ and $C'$ having
no common vertices and two walks that join
$v \in V(C)$ and $v' \in V(C')$.
\end{itemize}
\end{Proposition}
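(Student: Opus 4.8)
The plan is to derive all three assertions from the single principle that $I_G$ is a prime ideal generated by binomials, so that a binomial $f$ belongs to a minimal set of generators of $I_G$ if and only if $f \notin \mm I_G$, where $\mm = (y_1,\ldots,y_m)$; the pool of test elements in $I_G$ will be the binomials $f_{\Gamma'}$ of even closed walks $\Gamma'$ supplied by Lemma~\ref{ecw2}. Write $f_\Gamma = u-v$ with $u = \prod_{k=1}^q y_{i_{2k-1}}$ and $v = \prod_{k=1}^q y_{i_{2k}}$. First I would prove irreducibility: if $d := \gcd(u,v) \neq 1$, then $u/d - v/d$ lies in $I_G$ (because $I_G$ is prime and the monomial $d \notin I_G$), and $f_\Gamma = d\,(u/d - v/d) \in \mm I_G$ since $d$ is a non-unit monomial, contradicting minimality. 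Hence $\gcd(u,v)=1$; combinatorially this says that no edge of $G$ is traversed by $\Gamma$ both at an odd position and at an even position.

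Next I would show that $\Gamma$ visits each vertex at most twice. Suppose a vertex $w$ occurs at three or more positions of $\Gamma$. Rerooting $\Gamma$ at $w$, write it as a concatenation of $k \geq 3$ closed walks $L_1,\ldots,L_k$ based at $w$, of lengths $\ell_1,\ldots,\ell_k$ with $\sum_j \ell_j = 2q$. Since the total is even, the number of odd $\ell_j$ is even, so $\{1,\ldots,k\}$ splits into nonempty blocks $A,B$ with $\sum_{j\in A}\ell_j$ and $\sum_{j\in B}\ell_j$ both even (take $A=\{1\}$ if all $\ell_j$ are even; otherwise put two indices with odd $\ell_j$ into $A$ and the rest into $B$). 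Let $\Gamma_A,\Gamma_B$ be the even closed walks obtained by concatenating (based at $w$) the $L_j$ with indices in $A$, resp. $B$. Because both have even length, the parity of a position inside $\Gamma_A$ as a piece of $\Gamma$ agrees with the parity it carries on its own, so $u = u_A u_B$ and $v = v_A v_B$ with $f_{\Gamma_A} = u_A - v_A$ and $f_{\Gamma_B} = u_B - v_B$; by Lemma~\ref{ecw2} both lie in $I_G$, and neither is $0$ (if $u_A = v_A$ then $u_A \mid \gcd(u,v)=1$, impossible as $\Gamma_A$ has positive length). The telescoping identity
\[
f_\Gamma = u_A u_B - v_A v_B = u_B f_{\Gamma_A} + v_A f_{\Gamma_B},
\]
together with $\deg u_B, \deg v_A \geq 1$, then gives $f_\Gamma \in \mm I_G$, a contradiction.

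Finally I would settle the trichotomy by analyzing $G_\Gamma$, the multigraph whose edges are the edges of $\Gamma$ counted with multiplicity: $\Gamma$ is an Euler circuit of $G_\Gamma$, so $G_\Gamma$ is connected with all vertices of even degree, and by the previous step every degree is $2$ or $4$. If there is no degree-$4$ vertex, $G_\Gamma$ is a cycle traversed once by $\Gamma$, which has even length: case (i). If there is exactly one degree-$4$ vertex $w$, irreducibility forces $G_\Gamma$ to be simple (a double edge $\{w,x\}$ would make $x$ incident only to that edge, so $\Gamma$ would traverse it at two consecutive positions, of opposite parity); then the four edges at $w$ organize $G_\Gamma$ into two cycles meeting only at $w$ and traversed one after the other by $\Gamma$, and the telescoping argument applied at $w$ forces both cycles to be odd: case (ii). If there are two or more degree-$4$ vertices, one splits each of them according to the two passages of $\Gamma$ through it and, using once more that no edge repeats at positions of opposite parity and that no single-vertex split yields two even closed walks, identifies $G_\Gamma$ as two vertex-disjoint odd cycles $C,C'$ together with two walks joining a vertex of $C$ to a vertex of $C'$: case (iii).

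The hard part will be this last case. Even once $\Gamma$ is known to visit each vertex at most twice, there remain many connected Euler multigraphs of maximum degree $4$, and one must carry out a careful bookkeeping of how the two passages through each degree-$4$ vertex interleave along $\Gamma$ in order to rule out every configuration except the dumbbells of (iii); in particular one must argue that an even cycle occurring in the trace can only appear as the union of the two connecting walks, never as an independent cycle, since an independent even cycle traversed contiguously by $\Gamma$ would yield a single-vertex split into two even closed walks and hence $f_\Gamma \in \mm I_G$. The numerical relations among the lengths (the two end-cycles odd, and summing with the connecting walks to $2q$) then follow at once from the even length of $\Gamma$, which is all that is needed downstream.
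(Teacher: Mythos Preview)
The paper does not prove this proposition itself; it only quotes it from \cite[Lemmas~5.10 and~5.11]{binomialideals} and \cite{OHT}. Your overall strategy---primeness of $I_G$, the criterion $f_\Gamma\notin\mm I_G$, and the pool of test binomials supplied by Lemma~\ref{ecw2}---is the standard one and is essentially what those references do. Two points in your execution, however, need correction.

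In the ``at most twice'' step, the factorization $u=u_Au_B$, $v=v_Av_B$ fails for your choice of $A,B$. The position of an edge of $L_j$ inside $\Gamma_A$ differs from its position inside $\Gamma$ by $\sum_{i<j,\,i\in B}\ell_i$, and nothing forces this \emph{partial} $B$-sum to be even just because the total $B$-sum is. Concretely, take $k=3$ with $\ell_1,\ell_3$ odd and $\ell_2$ even: your rule gives $A=\{1,3\}$, $B=\{2\}$, and $L_2$ starts at the even position $\ell_1+1$ in $\Gamma$ but at position $1$ in $\Gamma_B$, so $u_B$ and $v_B$ are swapped relative to the corresponding pieces of $u$ and $v$, and the telescoping identity you wrote is false. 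The repair is to insist on a \emph{contiguous} split: among the partial sums $s_1,\ldots,s_{k-1}$ of the $\ell_j$ (after at most one cyclic reroot of $\Gamma$ at another occurrence of $w$, which changes $f_\Gamma$ only up to sign) some $s_r$ is even; with $A=\{1,\ldots,r\}$ the walk $\Gamma_A$ is an initial segment of $\Gamma$, the parities genuinely agree, and then your identity $f_\Gamma=u_Bf_{\Gamma_A}+v_Af_{\Gamma_B}$ is valid.

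For the trichotomy, your arguments for (i) and (ii) are fine, but (iii) is not a proof---you yourself flag it as ``the hard part'' and only describe the shape of what must be done. The difficulty is genuine: once several degree-$4$ vertices are present (for instance, every internal vertex of the bridge in a dumbbell $\Gamma=(C,P,C',P^{-1})$ has degree $4$ in $G_\Gamma$), one has to carry out an honest case analysis to show that the constraints ``$f_\Gamma$ irreducible'' and ``no contiguous even--even split at any repeated vertex'' leave only the configurations described in (iii). That is exactly where the cited references do their work, and your outline does not substitute for it.
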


Note that a complete characterization for 
a minimal set of binomial generators of $I_G$ is
given in \cite{minigene}.

\begin{Lemma}
\label{2q2q2q}
Let $q  \ge 3$.
Suppose that $K[G]$ has a $q$-linear resolution.
Then the length of any even cycle in $G$ is $2q$.
\end{Lemma}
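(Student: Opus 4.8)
I want to prove that if $K[G]$ has a $q$-linear resolution (with $q \ge 3$), then every even cycle in $G$ has length exactly $2q$. The strategy is to bound the length from both sides, using the two types of necessary conditions developed in the excerpt: the degree bound $\deg(\Pc_G) < q$ (from Lemma~\ref{deglemma}) and the binomial-generator constraint that $I_G$ is generated in degree $q$ (since a $q$-linear resolution forces $I_G$ to be generated by homogeneous polynomials of degree $q$).

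First I would handle the upper bound. Suppose $G$ contains an even cycle of length $2r$. If $2r > 2q$, then Lemma~\ref{degqqq} gives $\deg(\Pc_G) \ge q$, contradicting the necessary condition $\deg(\Pc_G) < q$ derived at the end of Section~\ref{section:ql} from Lemma~\ref{deglemma}. Hence $2r \le 2q$, i.e. every even cycle has length at most $2q$.

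Next the lower bound. Suppose for contradiction that $G$ has an even cycle $C = (e_{i_1}, \ldots, e_{i_{2r}})$ with $2r < 2q$, i.e. $r < q$. The even closed walk $\Gamma = C$ yields a binomial $f_\Gamma = \prod_{k=1}^r y_{i_{2k-1}} - \prod_{k=1}^r y_{i_{2k}} \in I_G$ of degree $r < q$. Since $K[G]$ has a $q$-linear resolution, $I_G$ is generated in degree $q$, so $I_G$ contains no nonzero element of degree $< q$; but $f_\Gamma \neq 0$ (it is a genuine binomial associated to a cycle, hence irreducible — see Proposition~\ref{ecw}), a contradiction. Therefore every even cycle has length at least $2q$. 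Combining the two bounds gives length exactly $2q$.

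The main subtlety — really the only place one must be careful — is the lower bound argument: one must be sure that $f_\Gamma$ is genuinely nonzero and lies in $I_G$, and that ``$q$-linear resolution'' does preclude degree-$< q$ elements of $I_G$. The first point is immediate because an even cycle visits each vertex at most twice and the two monomials $\prod y_{i_{2k-1}}$, $\prod y_{i_{2k}}$ use disjoint sets of edge-variables, so $f_\Gamma$ is a nonzero irreducible binomial in $I_G$ by Lemma~\ref{ecw2} and Proposition~\ref{ecw}. The second point is exactly the bulleted consequence recorded after the definition of $q$-linear resolution in Section~\ref{section:ql}: a $q$-linear resolution forces $I_G$ to be generated by homogeneous polynomials of degree $q$, and since $\beta_{1,j} = 0$ for $j \neq q$, in particular $(I_G)_j = 0$ for $j < q$. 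So no real obstacle remains; the argument is short once one has Lemma~\ref{degqqq} in hand.
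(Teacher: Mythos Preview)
Your proof is correct and follows essentially the same approach as the paper: the lower bound comes from the binomial $f_C$ of degree $r < q$ associated to a short even cycle, and the upper bound comes directly from Lemma~\ref{degqqq}. The paper's version is more terse, but the logic is identical.
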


\begin{proof}
Let $C$ be an even cycle in $G$ of length $2r$.
If $2r < 2q$, then the degree of the binomial $f_C \in I_G$ is $r < q$, and hence $K[G]$ does not have a $q$-linear resolution.
Suppose that $2r > 2q$.
Then $K[G]$ does not have a $q$-linear resolution
by Lemma~\ref{degqqq}.
\end{proof}

\section{Graphs with $q$-linear resolutions}
\label{sec:last}

In the present section, we give proofs for Theorems~\ref{thm:main}, \ref{2lr}, and \ref{qlr}.
First, we give a characterization of a connected graph $G$ such that $K[G]$
is a hypersurface (i.e., ${\rm codim}(K[G])=1$).

\begin{Proposition}
\label{hypersurfaceER}
Let $G$ be a finite connected graph with $n$ vertices, $m$ edges, and non-edge blocks
$B_1,\ldots, B_s$.
Then the following conditions are equivalent{\rm :}
\begin{itemize}
\item[{\rm (i)}]
$K[G]$ is a hypersurface{\rm ;}

\smallskip

\item[{\rm (ii)}]
$\displaystyle
m=
\left\{
\begin{array}{cc}
n & \mbox{ if } G \mbox{ is bipartite,}\\
n+1 & \mbox{otherwise;}
\end{array}
\right.
$
\smallskip

\item[{\rm (iii)}]
$G$ satisfies one of the following{\rm :}
\begin{itemize}
\item[{\rm (a)}]
$s=1$ and $B_1$ is an even cycle;

\item[{\rm (b)}]
$s=1$ and $B_1$ is a non-bipartite graph
obtained by adding a path to an even cycle;

\item[{\rm (c)}]
$s=2$ and $B_1$ and $B_2$ are cycles such that 
at least one of $B_1$ and $B_2$ is an odd cycle.
\end{itemize}\end{itemize}
\end{Proposition}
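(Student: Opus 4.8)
The plan is to prove the two equivalences (i)$\Leftrightarrow$(ii) and (ii)$\Leftrightarrow$(iii) separately. The first is immediate from the codimension formula recalled in Section~\ref{sec:edgepolyandring}: since $\codim(K[G]) = c(G)$ when $G$ is bipartite and $\codim(K[G]) = c(G)-1$ otherwise, with $c(G) = m-n+1$, the condition $\codim(K[G])=1$ holds exactly when $G$ is bipartite with $m=n$, or $G$ is non-bipartite with $m=n+1$, which is precisely (ii).

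For (ii)$\Leftrightarrow$(iii) I would work through the block decomposition of $G$. The basic facts are that every block of $G$ other than $B_1,\dots,B_s$ is a single edge, so $c(G) = \sum_{i=1}^{s} c(B_i)$ with each $c(B_i) \ge 1$, and that $G$ is bipartite if and only if every $B_i$ is. In particular, if $s=0$ then $G$ is a tree and if $s\ge 3$ then $c(G)\ge 3$, so in both cases (ii) and (iii) fail simultaneously, and it suffices to treat $1\le s\le 2$. Assume (ii). If $G$ is bipartite, then $c(G)=1$, forcing $s=1$ and $c(B_1)=1$; a $2$-connected graph $B_1$ with $c(B_1)=1$ satisfies $|E(B_1)|=|V(B_1)|$ and has minimum degree $\ge 2$, hence all its degrees equal $2$ and $B_1$ is a cycle, necessarily even since it is bipartite --- this is (iii)(a). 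If $G$ is non-bipartite, then $c(G)=2$, so either $s=2$ with $c(B_1)=c(B_2)=1$, in which case $B_1$ and $B_2$ are cycles (as above) and non-bipartiteness of $G$ forces one of them to be odd --- this is (iii)(c) --- or $s=1$ with $c(B_1)=2$. In the latter case $B_1$ is a $2$-connected bicyclic graph, and by Proposition~\ref{bitri}(a) together with the absence of cut vertices (which excludes the two multigraphs of Fig.~1 having a cut vertex and makes the tree-adjoining step trivial; equivalently, a degree-sum count rules out a degree-$4$ vertex), $B_1$ is a subdivision of the theta multigraph, i.e.\ a cycle with one path attached at two distinct vertices. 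Since the three cycle lengths of a theta graph sum to an even number, either none or exactly two of them are odd; as $B_1$ is non-bipartite, exactly one of its three cycles is even, and taking that cycle together with the remaining path exhibits $B_1$ as in (iii)(b). For the converse, one checks by a direct vertex/edge count that each graph in (iii) satisfies (ii): an even cycle is bipartite with $m=n$, while a non-bipartite cycle-plus-path and two cycles meeting in at most one vertex with at least one odd cycle each have $c(G)=2$ and are non-bipartite, hence $m=n+1$.

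The only step I expect to require care is the bicyclic block (iii)(b): one must argue, from Proposition~\ref{bitri}(a) plus $2$-connectedness (or directly from the degree sequence), that a $2$-connected bicyclic graph is a theta graph, and then carry out the short parity argument locating an even cycle inside a non-bipartite theta graph and rewriting it as an even cycle with one added path. Everything else reduces to bookkeeping with the cyclomatic number, the block decomposition, and the criterion for bipartiteness.
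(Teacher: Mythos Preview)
Your proof is correct and follows essentially the same route as the paper: the equivalence (i)$\Leftrightarrow$(ii) via the codimension formula, and (ii)$\Leftrightarrow$(iii) via the cyclomatic number and Proposition~\ref{bitri}(a). Your version is somewhat more explicit than the paper's---you spell out the block-additivity $c(G)=\sum_i c(B_i)$ to isolate the cases $s=1,2$, and you write out the parity argument showing a non-bipartite theta graph has exactly one even cycle---whereas the paper simply asserts that a non-bipartite bicyclic graph satisfies (b) or (c) by Proposition~\ref{bitri}(a); but the underlying argument is the same.
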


\begin{proof}
Let $G$ be a connected graph.
Since ${\rm codim}(K[G]) = m-n+r(G)$,
it follows that ${\rm codim}(K[G])=1$
if and only if $m=n+1-r(G)$.
Thus we have (i) $\Leftrightarrow$ (ii).

If $G$ is bipartite, then (ii) holds if and only if $c(G)=0$
if and only if $G$ has exactly one cycle.
Hence $G$ is a bipartite graph with $n$ edges 
if and only if $G$ satisfies condition (a).
Thus  (ii) $\Leftrightarrow$ (iii) holds for bipartite graphs. 

Suppose that $G$ is not bipartite.
Then (ii) holds if and only if $G$ is bicyclic.
By Proposition~\ref{bitri} (a), 
a non-bipartite graph $G$ is bicyclic
if and only if $G$ satisfies either (b) or (c).
Thus we have (ii) $\Leftrightarrow$ (iii) holds for non-bipartite graphs. 
\end{proof}

\begin{Lemma}
\label{noeven}
Let $q  \ge 3$.
Suppose that a connected graph $G$ has no even cycles.
If $K[G]$ has a $q$-linear resolution, then
$K[G]$ is a hypersurface.
\end{Lemma}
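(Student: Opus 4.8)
The plan is to argue by contradiction, the key tool being the degree of the edge polytope: I will show that if $K[G]$ has a $q$-linear resolution but is not a hypersurface, then $G$ contains an induced subgraph $G'$ with ${\rm codim}(K[G'])=2$ for which $\deg(\Pc_{G'})\ge q$, contradicting $\reg(K[G'])=q-1$ (Lemmas~\ref{inducedsubgraph} and \ref{deglemma}).

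First I extract the structure. As $K[G]$ has a $q$-linear resolution, $I_G\ne 0$, so by Proposition~\ref{simplex} $G$ is neither a tree nor unicyclic; having a cycle but no even cycle it is non-bipartite, so ${\rm codim}(K[G])=c(G)-1$ and it suffices to rule out $c(G)\ge 3$. Since $G$ has no even cycle, every block of $G$ is $K_2$ or an odd cycle: a $2$-connected block that is not a single cycle contains two vertices joined by three internally disjoint paths, and the three resulting cycles cannot all be odd because their lengths sum to an even number. Thus the odd-cycle blocks number exactly $c(G)\ge 3$, and I pick three of them $B_1,B_2,B_3$ so that $B_2$ lies between $B_1$ and $B_3$ in the block--cut tree with no other odd-cycle block on the two connecting segments; let $P_1,P_2$ be those segments, of lengths $d_1,d_2\ge 0$ (length $0$ meaning a shared cut vertex), and put $G':=B_1\cup P_1\cup B_2\cup P_2\cup B_3$. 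One checks that $G'$ is induced in $G$, so $K[G']$ is not a polynomial ring (${\rm codim}=c(G')-1=2$) and by Lemma~\ref{inducedsubgraph} it too has a $q$-linear resolution; in particular $I_{G'}$ is generated in degree $q$, hence has no nonzero element of degree $<q$. Writing $r_i=|E(B_i)|$ (odd, $\ge 3$), the even closed walk that runs around $B_1$, across $P_1$, around $B_2$, and back across $P_1$ gives (Lemma~\ref{ecw2}) a nonzero binomial in $I_{G'}$ of degree $\tfrac12(r_1+r_2)+d_1$, which is therefore $\ge q$; likewise $\tfrac12(r_2+r_3)+d_2\ge q$. Together with $r_i\ge 3$ these inequalities yield $|E(G')|=r_1+r_2+r_3+d_1+d_2\ge 2q+3$.

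The remaining, and principal, task is to exhibit a lattice point in the relative interior of $r\,\Pc_{G'}$ with $r=\dim\Pc_{G'}+1-q=|E(G')|-q-2$. Since $\dim\Pc_{G'}=|E(G')|-3$, the estimate $|E(G')|\ge 2q+3$ makes the target the all-ones vector $\mathbf 1_{V(G')}$ adjusted by one or two extra units at suitable cut vertices (the number of extra units fixed by parity), whose coordinate sum equals $2r$. I would realize this vector as $\sum_e\lambda_e\rho(e)$ with every $\lambda_e>0$: weights near $\tfrac12$ on the cycle edges, and along the paths alternating weights so that each path edge contributes about $\tfrac12$ (rather than $1$) to the level $\sum_e\lambda_e$, while the half-integer discrepancy coming from the three odd cycles is absorbed by $\tfrac13,\tfrac23$ (and, where a path is split at a cut vertex, $\tfrac16,\tfrac56$) weights, precisely as in the proof of Lemma~\ref{codimension2lemma}. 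I expect this construction to be the main obstacle: like Lemma~\ref{codimension2lemma} it will split into cases according to whether $d_1,d_2$ vanish, whether the attachment points of $P_1,P_2$ on $B_2$ coincide, and the parities of the two $B_2$-arcs between them, and in each case one must write down the explicit convex combination and verify that $\sum_e\lambda_e=r$ and that the image is the intended lattice point. Once all cases are checked, $\deg(\Pc_{G'})\ge q$ contradicts Lemma~\ref{deglemma}, whence $c(G)\le 2$ and, by Proposition~\ref{hypersurfaceER}, $K[G]$ is a hypersurface.
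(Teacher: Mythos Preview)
Your strategy --- bounding $\deg(\Pc_{G'})$ from below by an explicit interior lattice point --- is genuinely different from the paper's and closer in spirit to Lemma~\ref{codimension2lemma}. The paper instead invokes Lemma~\ref{EGlemma}: in codimension $2$ a $q$-linear resolution forces at least $\binom{q+1}{1}=q+1\ge 4$ minimal generators, whereas with only three odd cycles (and no even cycle) the three cycle-pairs supply at most three, because two distinct minimal generators coming from the same pair $(C,C')$ would require two walks of equal length joining $C$ to $C'$, hence an even cycle. This disposes of the tricyclic case in a few lines with no polytope computation; the passage to $c(G)>3$ is then the same induced-subgraph reduction you use.

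Your route should succeed in principle, but beyond the unfinished interior-point construction there is a gap in the setup. You assume one can choose $B_1,B_2,B_3$ so that $B_2$ lies between the other two in the block--cut tree; this is impossible in the $Y$-configurations (subdivisions of $H_2$ or $H_5$), where the three odd-cycle blocks hang off a common cut vertex and none lies between the other two --- and this remains impossible even when $c(G)\ge 4$ (e.g.\ four odd cycles meeting at one vertex). In those cases your edge-count formula $|E(G')|=r_1+r_2+r_3+d_1+d_2$ is wrong because $P_1$ and $P_2$ overlap, and the bound $|E(G')|\ge 2q+3$ can fail: for $q=5$, three triangles each joined by a single edge to a common vertex give $|E(G')|=12<13$. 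The desired conclusion $\deg(\Pc_{G'})\ge q$ still holds there (the all-ones vector lies in $\mathrm{int}(5\,\Pc_{G'})$), so the plan is salvageable, but the $Y$-case needs its own treatment and the completed argument would be substantially longer than the paper's.
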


\begin{proof}
Since $G$ has no even cycles, it is well known that each block of $G$ is either an edge or an odd cycle.
If $G$ has at most two odd cycles, then $K[G]$
is either a hypersurface or isomorphic to a polynomial ring.

Suppose that $G$ has exactly three odd cycles.
Then $G$ is tricyclic, and we may assume that $G$ is a subdivision of one of the graphs
$H_1,\ldots, H_7$ in Fig.~2.
Since the codimension of $K[G]$ is $c=2$,
the number of generators of $I_G$ is at least
 $ \binom{c+q-1}{c-1}=q+1 \ge 4$
by Lemma~\ref{EGlemma}.
Then
there exist at least four even closed walks of length $2q$ satisfying
one of the conditions (ii) and (iii) in Proposition \ref{ecw}.
Since $G$ has exactly three odd cycles,
there exists a pair of odd cycles $(C,C')$ that yields two
even closed walks $\Gamma$ of length $2q$ satisfying
condition (iii) of Proposition \ref{ecw}.
Then $G$ is a subdivision of one of the graphs
$H_1$, $H_3$, and $H_4$ in Fig.~2.
Let $\Gamma_1 = (C, W_1 , C', W_2)$ be an even closed walk of length $2q$ satisfying
 condition (iii) of Proposition \ref{ecw}.
Since $\Gamma_1$ visits each vertex at most twice, and the sum of the length of $W_1$ and $W_2$
is even, it follows that $W_1$ is a path and $W_2$ is the reverse walk of $W_1$.
Let $\Gamma_2 = (C, W_1' , C', W_2')$ be another even closed walk of length $2q$ satisfying
 condition (iii) of Proposition \ref{ecw}.
Then $W_1$ and $W_1'$ have the same length.
However, this is impossible since 
$G$ is a subdivision of one of the graphs
$H_1$, $H_3$, and $H_4$ in Fig.~2 that has no even cycles.
Thus $I_G$ has at most three generators and hence 
$K[G]$ does not have a $q$-linear resolution.

If $G$ has more than three odd cycles,
then $G$ has an induced connected subgraph $G'$ with exactly three odd cycles.
Since $K[G']$ does not have a $q$-linear resolution,
$K[G]$ does not have a $q$-linear resolution by Proposition \ref{inducedsubgraph}.
\end{proof}

We are now in a position to prove main theorems.

\begin{proof}[Proof of Theorem \ref{thm:main}]
Suppose that $K[G]$ has a $q$-linear resolution and is not a hypersurface.
By Lemmas \ref{2q2q2q} and \ref{noeven}, $G$ has at least one even cycle, and the length of any even cycle of $G$ is $2q$.
Moreover, from Lemma \ref{T2lemma}, 
there exists exactly one block $B$ of $G$ that contains an even cycle.

Suppose that $B$ is an even cycle.
Since $K[G]$ is not a hypersurface.
$G$ has at least two odd cycles $C_1$ and $C_2$.
Then there exists a tricyclic subgraph of $G$ which contains
$B \cup C_1 \cup C_2$.
By Lemma \ref{codimension2lemma}, this is a contradiction.

Suppose that $B$ is a nonbipartite graph
obtained from an even cycle by joining two vertices by a path.
If $G$ has no blocks consisting of an odd cycle, then $K[G]$ is a hypersurface.
Suppose that $G$ has a block $B'$ consisting of an odd cycle.
Then there exists a tricyclic subgraph of $G$ which contains $B \cup B'$.
By Lemma \ref{codimension2lemma}, this is a contradiction.

Thus the cyclotomic number of $B$ satisfies $c(B) \ge 3$.
Then there exists a subgraph $H$ of $B$
such that $H$ is a tricyclic graph having an even cycle.
By Lemma \ref{codimension2lemma}, this is a contradiction.
\end{proof}

\begin{proof}[Proof of Theorem~\ref{2lr}]
Let $G$ be a connected graph with $n$ vertices and $m$ edges.
In the proof of \cite[Theorem~4.6]{OHquad}, it was shown that 
$K[G]$ has a $2$-linear resolution if and only if $G$ has a subgraph 
$K_{2,\delta}$ and $I_G = I_{K_{2,\delta}}K[x_{2\delta+1},\ldots,x_m]$
by changing the indices of variables if needed.

If $G$ satisfies condition (i), then each edge $e \in E(G) \setminus E(K_{2,\delta})$
is not contained in any cycle of $G$.
Since $G$ is bipartite, $I_G$ is generated by $f_\Gamma$ where $\Gamma$ is an even cycle of $G$
by Proposition \ref{ecw}.
Hence $I_G = I_{K_{2,\delta}}K[x_{2\delta+1},\ldots,x_m]$.
Suppose that $G$ satisfies condition (ii).
Let $G'$ be a subgraph of $G$ obtained by removing $e$ from $G$
which satisfies condition (i).
Since $G'$ is a bipartite graph with $n$ vertices and $m-1$ edges,
we have $\codim (K[G']) = (m-1) -n +1=m-n = \codim (K[G]) $.
Hence we have $I_G = I_{G'}K[x_m]$ where $x_m$ corresponds to the edge $e$.
Since $G'$ satisfies condition (i), we have 
$I_G = I_{G'}K[x_m]=
 I_{K_{2,\delta}}K[x_{2\delta+1},\ldots,x_m]$.

Suppose that $K[G]$ has a $2$-linear resolution.
Then $G$ has a subgraph 
$K_{2,\delta}$ and 
$$\codim (K[G]) = \codim (K[K_{2,\delta}])=|E(K_{2,\delta})| -|V(K_{2,\delta})|+1.$$
Let $B_1, \ldots, B_s$ be the set of all blocks of $G$, where $B_1$ contains $K_{2,\delta}$.
Then 
\[
\codim (K[G]) =
r(G)-1+\sum_{i=1}^s (|E(B_i)|-|V(B_i)|+1) .
\]
Note that $|E(B_1)|-|V(B_1)|+1 \ge |E(K_{2,\delta})| -|V(K_{2,\delta})|+1$
and each $|E(B_i)|-|V(B_i)|+1$ is a nonnegative integer.

If $G$ is bipartite, then 
 $|E(B_1)|-|V(B_1)|+1 = |E(K_{2,\delta})| -|V(K_{2,\delta})|+1$
and $|E(B_i)|-|V(B_i)|+1 =0$ for any $i \ge 2$.
Hence $B_1 = K_{2,\delta}$ and $B_i$ is an edge for any $i \ge 2$.
Thus this is equivalent to satisfying condition (i).

If $G$ is not bipartite, then there exists an odd cycle $C$.
Let $e$ be an edge of $C$ which is not contained in $K_{2,\delta}$,
and let $G'$ be a subgraph of $G$ obtained by removing $e$.
Since $I_G = I_{K_{2,\delta}}K[x_{2\delta+1},\ldots,x_m]$, we have
$m-n=\codim (K[G]) = \codim (K[G'])=m-1-n+r(G')$.
Thus $G'$ should be a bipartite graph such that $K[G']$ has a $2$-linear resolution.
\end{proof}

\begin{proof}[Proof of Theorem~\ref{qlr}]
From Theorem \ref{thm:main} and Proposition~\ref{hypersurfaceER},
$K[G]$ has a $q$-linear resolution
if and only if $I_G$ is generated by a binomial of degree $q$, and $G$ satisfies
one of conditions (a), (b), and (c) of Proposition~\ref{hypersurfaceER}.
From Proposition~\ref{ecw}, we have the following:
\begin{itemize}
\item[{\rm (i)}]
If $s=1$ and $B_1$ is an even cycle, 
then $I_G$ is generated by a binomial of degree $\ell/2$,
where $\ell$ is the length of $B_1$.
Thus $K[G]$ has a $q$-linear resolution if and only if $\ell = 2q$.

\item[{\rm (ii)}]
If $s=1$ and $B_1$ is a non-bipartite graph
obtained by adding a path to an even cycle,
then  $I_G$ is generated by a binomial of degree $\ell/2$,
where $\ell$ is the length of the even cycle.
Thus $K[G]$ has a $q$-linear resolution if and only if $\ell = 2q$.

\item[{\rm (iii)}]
If $s=2$, $B_1$ is an even cycle, and $B_2$ is an odd cycle, then 
then $I_G$ is generated by a binomial of degree $\ell/2$,
where $\ell$ is the length of $B_1$.
Thus $K[G]$ has a $q$-linear resolution if and only if $\ell = 2q$.

\item[{\rm (iv)}]
Suppose that $s=2$ and $B_1$ and $B_2$ are odd cycles having one common vertex.
Let  $r_i$ be the length of $B_i$.
Then $I_G$ is generated by a binomial of degree $r_1 + r_2$.
Thus $K[G]$ has a $q$-linear resolution if and only if $r_1 + r_2 = 2q$.

\item[{\rm (v)}]
Suppose that $s=2$ and $B_1$ and $B_2$ are odd cycles
without a common vertex.
Let  $r_i$ be the length of $B_i$ and let $\ell$ be the length of the shortest path from a vertex of $B_1$
to a vertex of $B_2$.
Then $I_G$ is generated by a binomial of degree $r_1 + r_2 + 2 \ell $.
Thus $K[G]$ has a $q$-linear resolution if and only if $r_1 + r_2+ 2 \ell  = 2q$.
\end{itemize}
Hence
we have the desired conclusion.
\end{proof}

\section*{Acknowledgement}
The authors were partially supported by JSPS KAKENHI 18H01134, 19K14505, and 19J00312.

\end{document}